\newcommand{\zjel}[1]{\left({#1}\right)} %% paranthesis
\newcommand{\norm}[1]{\left\|{#1}\right\|} %% norm
\def\isparamm{\@ifnextchar{\bgroup}}
\def\subscr#1{_{\{#1\}}}
\def\subnozjel#1{_{#1}}
\newcommand{\I}[1][]{\mathbf{1}%
  \@ifempty{#1}{\isparamm\subscr\subnozjel}{\subnozjel{#1}}}
\newcommand{\E}[1][]{\mathbb{E}\@ifempty{#1}{}{_{#1}}\isparamm\zjel\relax}
\renewcommand{\P}[1][]{\mathbb{P}\@ifempty{#1}{}{_{#1}}\isparamm\zjel\relax}
\newcommand{\PQ}[1][]{\mathbb{Q}\@ifempty{#1}{}{_{#1}}\isparamm\zjel\relax}
\newcommand{\sgn}{\@ifstar\sgnstar\sgnnostar}
\def\sgnnostar{\operatorname{sgn}}
\def\sgnstar{\overline{\operatorname{sgn}}}
\newcounter{enumr}
\numberwithin{equation}{section}
\newcommand{\p}{ \mathbb{P}}            %% probability
\newcommand{\R}{\mathbb{R}}            %% reals
\newcommand{\N}{\mathbb{N}}            %% naturals
\newcommand{\de}{\mathrm{d}}  
\newcommand{\ra}{\rightarrow}           %%
\newcommand{\Om}{\Omega}
\bmdefine\xib{\mathbf{\xi}}
\bmdefine\mub{\mathbf{\mu}}
\bmdefine\nub{\mathbf{\nu}}
\bmdefine\etab{\mathbf{\eta}}
\bmdefine\deltab{\mathbf{\delta}}
\bmdefine\thetab{\mathbf{\vartheta}}
\bmdefine\alphab{\mathbf{\alpha}}
\bmdefine\betab{\mathbf{\beta}}
\bmdefine\taub{\mathbf{\tau}}
\bmdefine\sigmab{\mathbf{\sigma}}
\bmdefine\Sigmab{\mathbf{\Sigma}}
\bmdefine\gammab{\mathbf{\gamma}}
\bmdefine\Gammab{\mathbf{\Gamma}}
\theoremstyle{plain}
\newtheorem{thm}{Theorem}[section]
\newtheorem{prop}{Proposition}[section]
\theoremstyle{definition}
\theoremstyle{remark}
\newtheorem{rem}{Remark}[section]
\title{     Two 
Brownian Particles  with Rank-Based Characteristics    and  Skew-Elastic Collisions
% }
% \thanks{}
} 
\author{
 \textsc{E. Robert Fernholz}
            \thanks{$\,$\textsc{Intech} Investment Management LLC, One Palmer Square, Suite 441,  Princeton, NJ 08542 (E-mail: {\it bob@enhanced.com}).}
             \and
 \textsc{Tomoyuki Ichiba}
  \thanks{$\,$Department of Statistics and Applied Probability, South Hall,
    University of California, Santa Barbara, CA 93106 (E-mail:  {\it
      ichiba@pstat.ucsb.edu}).} 
  \and
           \textsc{Ioannis Karatzas}
            \thanks{$\,$\textsc{Intech} Investment Management LLC, One Palmer Square, Suite 441,  Princeton, NJ 08542 (E-mail: {\it ik@enhanced.com}) and Department of Mathematics, Columbia University, MailCode 4438, New York, NY 10027 (E-mail: {\it ik@math.columbia.edu}). Research  supported in part by            National Science Foundation Grant DMS-09-05754.}
                       }  
\date{August 19, 2012} 
\begin{document}

\maketitle

\begin{abstract}
  \medskip 
\small  

\noindent   
We construct a two-dimensional diffusion process with rank-dependent local drift and dispersion co\"efficients, and with a full range of  patterns of behavior upon  collision that range from totally frictionless interaction, to elastic collision, to perfect reflection of one particle on the other. These interactions are governed by the left- and right-local times at the origin for the distance between the two particles. We realize this diffusion in terms of   appropriate, apparently novel systems of stochastic differential equations involving local times,   which we show are well posed. Questions of pathwise uniqueness and strength are also discussed for these systems.

The analysis depends crucially on properties of a skew Brownian motion with two-valued drift of the bang-bang   type, which we also study in some detail. These properties allow us to compute the transition probabilities of the original planar diffusion, and to study its behavior under time reversal. 
    \end{abstract}
%%%%%%%%%%%%%%%%%%%%%%
\medskip
 \noindent
 {\it Key Words and Phrases:} Diffusion, Local Time, Skew Brownian Motion, Time Reversal, Brownian Motion reflected on  Brownian motion.  

\medskip
 \noindent{\it AMS 2000 Subject Classifications:} Primary 60H10 $\,\cdot\,$ 60G44; secondary 60J55 $\,\cdot\,$ 60J60 
 %% another possibility is 60J35 for transition functions, generators and resolvents

 \input amssym.def
\input amssym

\medskip

%%%%%%%%%%%%%
%%%%%%%%%%%%%%

 %   SECTION  1

%%%%%%%%%%%%%%%%%%%%%%%%%
\section{Introduction}
\label{sec1}
%%%%%%%%%%%%%%%%%%%%%%%%%%%%%%

\noindent
We   construct a planar diffusion $\,   (X_1 (\cdot), X_2 (\cdot))\,$ according to the following recipe: each of its component particles  $\,  X_1 (\cdot)\,$ and $\, X_2 (\cdot) \,$    behaves locally like Brownian motion. The characteristics of these random motions are  assigned not by name, but by rank: the leader is assigned drift $\, -h \le 0\,$ and dispersion $\, \rho  \ge 0\,$, whereas the laggard is assigned drift $\, g \ge 0\,$ and dispersion $\, \sigma  \ge  0\,$. One of the dispersions is allowed to vanish, but not both; similarly for the drifts. In the interest of concreteness and simplicity, we shall  set 
\begin{equation}
\label{1.1}
\lambda \,:=\, g + h\,>\,0  \,,  \qquad 
\rho^2 + \sigma^2 \,=\,1  \,.
\end{equation} 
A bit more precisely, we shall   construct a complete  probability space $\, ( \Om,   \mathfrak{F}, \p)\,$ endowed with a filtration $\, \mathbf{F} = \{ \mathfrak{F} (t) \}_{0 \le t < \infty }\,$ that satisfies the ``usual conditions''  of right continuity and of augmentation by $\p-$negligible sets, and on it two pairs $\, (B_1 (\cdot), B_2 (\cdot))\,$ and $\, (X_1 (\cdot), X_2 (\cdot))\,$ of continuous, $\, \mathbf{F}-$adapted processes, such that  $\, (B_1 (\cdot), B_2 (\cdot))\,$ is planar Brownian motion  and $  (X_1 (\cdot),X_2 (\cdot))\,$ a continuous planar semimartingale that starts at some given site $  (X_1 (0), X_2 (0))= (x_1, x_2) \in \R^2\,$ on the plane   and  satisfies   the dynamics
$$
\mathrm{d} X_1 (t)\,=\,  \Big( g \, \mathbf{ 1}_{ \{ X_1 (t) \le  X_2 (t) \} } - h \, \mathbf{ 1}_{ \{ X_1 (t) >  X_2 (t) \} } \Big) \, \mathrm{d} t \,+ \,  \Big( \rho\, \mathbf{ 1}_{ \{ X_1 (t) >  X_2 (t) \} }  +\, \sigma\,  \mathbf{ 1}_{ \{ X_1 (t) \le   X_2 (t) \} } \Big)\,      \mathrm{d} B_1 (t)
$$
 \begin{equation}
\label{1.3}
~~~~~~~~~~~~~~~~~~~~~  \,+ \,{\,1- \zeta_1 \, \over 2} \,  \mathrm{d} L^{X_1 - X_2} (t) \,+ \,{\,1- \eta_1 \, \over 2} \,  \mathrm{d} L^{X_2 - X_1} (t) \,,
\end{equation}
 $$
\mathrm{d} X_2 (t)\,=\,  \Big( g \, \mathbf{ 1}_{ \{ X_1 (t) >  X_2 (t) \} } - h \, \mathbf{ 1}_{ \{ X_1 (t) \le  X_2 (t) \} } \Big) \, \mathrm{d} t \,+ \, \Big( \rho\, \mathbf{ 1}_{ \{ X_1 (t) \le  X_2 (t) \} }\, +\, \sigma\,  \mathbf{ 1}_{ \{ X_1 (t) >   X_2 (t) \} }\Big)\,      \mathrm{d} B_2 (t)
$$
\begin{equation}
\label{1.4}
~~~~~~~~~~~~~~~~~~~~~  \,+ \,{\,1- \zeta_2 \, \over 2} \,  \mathrm{d} L^{X_1 - X_2} (t) \,+ \,{\,1- \eta_2 \, \over 2} \,  \mathrm{d} L^{X_2 - X_1} (t)\,.
\end{equation}

\medskip
\noindent
Here and in the sequel  we denote  by  $\,  L^{X } (\cdot) \equiv L^{X } (\cdot \,;0)\,$  the   right-continuous local time accumulated at the origin by a generic continuous semimartingale $\, X(\cdot)\,$, by  $\,  L^{X }_- (\cdot) \equiv L^{-X } (\cdot \,;0)\,$  its   left-continuous version, and by $\, \widehat{L}^{X } (\cdot) = ( L^{X } (\cdot) + L^{X }_- (\cdot) ) / 2\,$ its symmetric  version; we collect in section \ref{sec2} the necessary reminders from the theory of semimartingale local time.

\smallskip
Each time the   particles collide, their trajectories are ``dragged'' by amounts proportional to the  right  local times,  $\, L^{X_1 - X_2} (t)\,$ and $\, L^{X_2 - X_1} (t)\,$, respectively,  that have been accumulated up to that instant $\,t\,$ at the origin by the differences $\, X_1 (\cdot) - X_2 (\cdot)\,$ and $\, X_2 (\cdot) - X_1 (\cdot)\,$; this is the significance of the last two terms in each of (\ref{1.3}), (\ref{1.4}). With the notation
\begin{equation}
\label{1.6}
   \zeta \,:=\, 1 + {\,  \zeta_1 - \zeta_2 \, \over 2} \,, \qquad   \eta \,:=\, 1 - {\,  \eta_1 - \eta_2 \, \over 2}    \,,
\end{equation} 
the proportionality constants of these interactions, $\, \zeta_i\,$ and $\, \eta_i\,$ for $\, X_i (\cdot)\,$ ($i=1, 2$), will be assumed to satisfy the conditions
\begin{equation}
\label{1.5}
  \zeta  + \eta \, \neq \, 0 \,,\qquad 0 \, \le \, \alpha \,:=\,{ \eta \over \, \eta + \zeta \,} \, \le \, 1\,.
\end{equation} 

We shall discuss in detail the significance of these conditions for the system of equations (\ref{1.3}), (\ref{1.4}); in particular, the fact that they are not only sufficient but also necessary for the well-posedness of the above system of stochastic equations. For the time being, let us note that in the special case $\, \zeta_1 =\eta_1 =1\,$,   the trajectory $\, X_1(\cdot)\,$ of the first particle  crosses the trajectory $\, X_2(\cdot)\,$ of the second particle without ``feeling'' it, that is, without being subjected to any local time drag; as we shall see in subsection \ref{Rem2}, we obtain this same effect under the more general condition  (\ref{1.8}). Likewise, the second particle does not ``feel'' the first, when $\, \zeta_2 =\eta_2 =1\,$ or, more generally, under the condition (\ref{1.9}). When $\, \zeta_i =\eta_i =1\,$  $(i=1, 2)$,  the local times vanish completely from (\ref{1.3}), (\ref{1.4}) and we are in the situation studied in detail by \textsc{Fernholz et al.} \cite{FIKP2012}.  In this case, the collisions of the   particles are totaly frictionless. 

At the other extreme $\, \zeta =0\neq \eta\,$ (respectively, $\, \eta =0\neq \zeta\,$)  the trajectory $\, X_1(\cdot)\,$ of the first particle bounces off the trajectory $\, X_2(\cdot)\,$ of the second particle (resp., the other way round), as if the latter trajectory  were a perfectly reflecting boundary; cf. subsection \ref{Rem1}. Think of the second (resp., the first) particle as being ``heavy'', so that in collisions with the ``light'' first (resp., second) particle its motion is unaffected, while the light particle undergoes perfect reflection. 

In between, for other values of the parameters, we have collisions that are neither totally frictionless (without local time drag), nor perfectly reflecting, but ``elastic'': The particles are subjected in general to local time drag, and this kind of friction manifests itself in an asymmetric fashion -- due to the presence of both right- and left- local times at the origin $\,  L^{Y} (\cdot) \equiv L^{Y} (\cdot \,;0+ )\,$ and $\,  L^{Y}_- (\cdot) \equiv L^{Y} (\cdot \,;0- )\,$ in (\ref{1.3}), (\ref{1.4}) for the difference $\, Y(\cdot) = X_1 (\cdot) - X_2 (\cdot)\,$. We call such collisions ``skew-elastic''.

%%%%%%%%%%%%%%%%%%%%%%%%%
\subsection{Preview}
\label{sec1.1}
%%%%%%%%%%%%%%%%%%%%%%%%%%%%%%

\noindent
Under the  conditions of (\ref{1.5}),  the system of equations (\ref{1.3}), (\ref{1.4}) will be shown in section \ref{sec4} to admit a  weak solution, which is unique in the sense of the probability distribution; cf.$\,$Theorem \ref{Theorem 1}. Using a common terminology: under the conditions of (\ref{1.5}),  the system of equations (\ref{1.3}), (\ref{1.4}) is {\it well posed}. 

A crucial r\^ole in establishing this result will be played by the properties of the difference $\, Y (\cdot) = X_1 (\cdot) - X_2 (\cdot)\,$, for which we  show that 
$$
 W(t)\,:=\, Y (t) - ( x_1 - x_2)+ \lambda \int_0^t  \mathrm{sgn}  \big( Y(s)\big)\,   \mathrm{d} s  - 2\, \big(2\,\alpha -1\big) \, \widehat{L}^{\,Y} (t)    \,, \qquad 0 \le t < \infty 
$$
is standard Brownian motion $\, W(\cdot)\,$ in the notation of (\ref{1.1}), (\ref{1.5}). To put a little differently: we identify this difference $\, Y (\cdot) = X_1 (\cdot) - X_2 (\cdot)\,$ as a  so-called {\it Skew Brownian Motion with Bang-Bang drift}, a process that we study in detail in Section \ref{sec5}. 

\smallskip
Similarly, recalling the notation of (\ref{1.6}) and setting 
  $$
  \beta \,:=\, {\,  \eta\, (\zeta_1 + \zeta_2) + \zeta  (\eta_1 + \eta_2)   \, \over \, 2\,( \eta + \zeta )\,}\,,
  $$
we   identify the process 
$$  
V(t)\,:=\, X_1  (t) + X_2 ( t) - (x_1 + x_2) - (g-h)\,t - 2\, (1-\beta)\,\widehat{L}^{\,Y} (t) \, , \qquad 0 \le t < \infty\,
$$  
as another standard Brownian motion, whose cross-variation with the Brownian motion $\,W(\cdot)\,$ is 
$$
\langle V,W \rangle (\cdot) \,=\, \langle V,Y \rangle (\cdot) \,=\, \gamma   \int_0^{\, \cdot} \mathrm{sgn} \big( Y(t) \big) \, \mathrm{d} t\,, \qquad \gamma  \,:=\, \rho^2 - \sigma^2 \,.
$$
These identifications allow us then to represent the motions $\, X_1 (\cdot)\,$,  $\, X_2 (\cdot)\,$ of the individual particles in the form 
  \[
X_1 (t) = x_1 + \mu\, t + \rho^2  \big(  Y^+ (t) - y^+ \big) - \sigma^2 \big( Y^- (t) - y^- \big) + \big(   1    -   \beta - \gamma \big)\, \widehat{L}^{\,Y} (t) + \rho \, \sigma \, Q (t)  \,,
\]
\[
X_2 (t) = x_2 + \mu\, t - \sigma^2 \,\big(  Y^+ (t) - y^+ \big) + \rho^2  \big(  Y^- (t) - y^- \big) + \big(   1   -   \beta - \gamma \big)\, \widehat{L}^{\,Y} (t)   +    \rho \, \sigma \, Q (t)  \,;
\]

\noindent
 here  $\,Q(\cdot)\,$ is yet another standard Brownian motion, independent of the difference $\, Y (\cdot) = X_1 (\cdot) - X_2 (\cdot)\,$, and $$\, \mu \,= \,g \, \rho^2 - h\, \sigma^2\,.$$ This way we construct a weak solution to the system of equations (\ref{1.3}), (\ref{1.4}), and also show that uniqueness in distribution holds for it. 
 
Always under the  conditions of (\ref{1.5}),  the system of equations (\ref{1.3}), (\ref{1.4}) is shown in section \ref{sec4} actually to admit a {\it pathwise unique, strong solution;} cf.$\,$Theorem \ref{Theorem 2}. Here we     refine  the \textsc{Le$\,$Gall} \cite{MR0770393} \cite{MR777514}  
%(1983, 1984) 
methodology,  that we used in the recent work \textsc{Fernholz et al.} \cite{FIKP2012} %(2012)
 to establish pathwise uniqueness for a generalization of the perturbed \textsc{Tanaka} equation of \textsc{Prokaj} \cite{tanaka2009}.%(2011). 

\smallskip
In fact,  {\it  the  conditions in (\ref{1.5}) turn out to be not just sufficient but also necessary for the well-posedness of the system   (\ref{1.3}), (\ref{1.4});} cf. Proposition \ref{Proposition 3}. As we shall see in Remarks \ref{EZ0} and \ref{EZ1}, this system admits no solution in the case $\, \eta = - \zeta \neq 0\,$; whereas it has lots of solutions, i.e., uniqueness in distribution fails  for the system of equations (\ref{1.3}) and (\ref{1.4}), when $\, \eta = \zeta =0\,$.  Finally, if we do have $\, \eta + \zeta \neq 0\,$ yet (\ref{1.5}) fails because 
  $\, \alpha \notin [0,1]\,$, it is seen  in Remark \ref{HS} that  the system   (\ref{1.3}), (\ref{1.4}) once again fails to admit a solution.

\smallskip
Section \ref{sec6} discusses some special configurations of the parameters $\, \eta_i, \, \zeta_i\,$ ($i=1,2$) in (\ref{1.3}), (\ref{1.4}) that give rise to some rather interesting structure.  We see, in particular in the non-degenerate case $\, \rho \, \sigma >0\,$, that when $\, \beta =0\,$ (respectively, $\, \beta =2\,$), the trajectory $\, X_1 (\cdot) \vee X_2 (\cdot)\,$ of the ``leader'' (respectively, $\, X_1 (\cdot) \wedge X_2 (\cdot)\,$ of the ``laggard'') is   Brownian motion with drift, with perfect reflection on the trajectory $\, X_1 (\cdot) \wedge X_2 (\cdot)\,$ of the ``laggard'' (respectively, $\, X_1 (\cdot) \vee X_2 (\cdot)\,$ of the ``leader''), which is then another,  independent Brownian motion with drift.

\smallskip
Section \ref{sec5} develops the theory and properties of the Skew Brownian Motion with Bang-Bang drift. Finally, Section \ref{sec7}    uses these properties to compute  the transition probabilities and the time-reversal of the planar diffusion $\, (X_1(\cdot), \, X_2 (\cdot))\,$. 

%\smallskip

%%%%%%%%%%%%%%%%%%%%%%%%%
\subsection{Extant Work and Open Questions}
\label{sec1.2}
%%%%%%%%%%%%%%%%%%%%%%%%%%%%%%

\noindent
The study of multidimensional stochastic differential equations that involve  a local time supported
on a smooth hypersurface starts with the work of \textsc{Anulova}  %(1980) 
\cite{MR609190} and \textsc{Portenko} \cite{MR0440716} 
\cite{MR522237} 
\cite{MR532446} 
\cite{MR1104660}. 
%(1978), (1979.a,b), (1990) 
To the best of our knowledge, systems of stochastic  equations of the type 
\begin{equation}
\label{SZV}
X_i (\cdot) \,=\, x_i + B_i (\cdot) + \sum_{j\neq i }\, \,q_{ij}\, \widehat{L}^{X_i - X_j} (\cdot)\,, \qquad i=1, \cdots, n
\end{equation}
for a suitable array of real constants $\,  ( q_{i j}  )_{1 \le i, j \le n}\,$, with $\, B_1 (\cdot), \cdots, B_n (\cdot)\,$ independent standard Brownian motions, were studied first by \textsc{Sznitman \& Varadhan} \cite{MR833269}. %(1986). 
%%%%%%%%%%%%%%%%%%
%\footnote{
In fact, these authors consider the more general model 
\begin{equation}
\label{SZV1}
{\bm X}(t) \, =\, {\bm x} +   
{\bm B}(t) + \sum_{k=1}^{N} \,{\bm q}_{k}\,    
\widehat{L}^{\,{\bm n}_{k} \cdot {\bm X}}(t) \,, \quad \, 0 \le t < \infty \,,
\end{equation}
 where $\,{\bm X}(\cdot) \, :=\, (X_{1}(\cdot), \ldots , X_{n}(\cdot) )'\,$, $\,{\bm B}(\cdot) \, :=\, (B_{1}(\cdot), \cdots , B_{n}(\cdot))'\,$ is     Brownian motion in $\, \R^n\,$, $\, {\bm x} \in \R^n\,$,  the unit column vectors $\,{\bm n}_{k}\,$  generate pairwise distinct hyperplanes, and the column vectors  $\,{\bm q}_{k}\,$  satisfy  the orthogonality conditions $\,\,{\bm q}_{k}\, \cdot \,{\bm n}_{k}\, \, =\, 0\, $ for $\,k = 1, \ldots , N\,$. When $\,g \, =\, h \, =\, 0\,$ and  $\,\sigma \, =\,  \rho \, \,$,    it can be verified -- using the relationships  (\ref{alpha}) between the symmetric local time and the right local time -- that the system (\ref{1.3})-(\ref{1.4}) is equivalent to the  model (\ref{SZV1}) with parameters 
$\, n \, =\, 2\, $, $\, N \, =\,  1\,$, 
$$\,
 {\bm n}_{1} \, :=\, (1, -1)' \, / \, \sqrt{2} \,, \quad \hbox{and} \quad  
{\bm q}_{1} \, :=\, \big(   \alpha ( 1- \zeta_{1}) + (1-\eta_{1}) (1-\alpha)  \, , \,
  \alpha ( 1- \zeta_{2}) + (1-\eta_{2}) (1-\alpha)  \big)' \, .
  $$  
The orthogonality conditions amount then to $\, \eta \, =\, \zeta\,$. Thus, we can apply the results of \textsc{Sznitman \& Varadhan} \cite{MR833269}, 
%(1986), 
 if $\,g \, =\, h \, =\, 0\,$, $\,\sigma \, =\,  \rho \,$, $\,\eta \, =\, \zeta \,$ in our system (\ref{1.3})-(\ref{1.4}).  
%}
%%%%%%%%%%%%%%%%%%

There are rather obvious similarities, as well as differences, between the system (\ref{SZV}) and that of (\ref{1.3}), (\ref{1.4}). In particular, it would be very interesting to extend the results of this paper to systems of stochastic differential equations of the type
$$
 X_i (\cdot)\,=\,x_i +  \sum_{k=1}^n \int_0^{\, \cdot}\,  \delta_k \, \mathbf{1}_{\{ X_i (t) = X_{(k)} (t) \}}\, \mathrm{d}t \,+\, \sum_{k=1}^n \int_0^{\, \cdot}\,\,  \sigma_k \, \mathbf{1}_{\{ X_i (t) = X_{(k)} (t) \}}\, \mathrm{d} B_i (t)
 $$
\begin{equation}
\label{BaPar}
~~~~~~~~~~~~~~~~~~~~+\,\sum_{j\neq i} \Big( \,q^+_{ij}\, L^{X_i - X_j} (\cdot) + q^-_{ij}\, L^{X_j - X_i} (\cdot) \Big) \,, \qquad i=1, \cdots, n 
\end{equation}
for an arbitrary number $\, n \in \N\,$ of particles, with  $\, x_1, \cdots, x_n\,$ and $\, \delta_1, \cdots, \delta_n\,$   given real constants, with   $\, \sigma_1, \cdots, \sigma_n\,$   given positive constants, with suitable arrays  $\,  ( q^{\,\pm}_{i j}  )_{1 \le i, j \le n}\,$ of real constants,    the ``descending order statistics'' notation
 $$
 \max_{1 \le j \le n} X_j (t) \,=:\, X_{(1)}(t)  \ge  X_{(2)}(t)  \ge \, \cdots\,  \ge X_{(n-1)}(t)  \ge X_{(n)}(t) \,:=\, \min_{1 \le j \le n} X_j (t)\,,
$$
and lexicographic breaking of ties. This system (\ref{BaPar}) exhibits both features of rank-dependent characteristics and skew-elastic collisions that are manifest in (\ref{1.3}), (\ref{1.4}), but involves several particles rather than just two.

%\medskip

%   SECTION  2 

%%%%%%%%%%%%%%%%%%%%%%%%%
\section{On Semimartingale Local Time}
\label{sec2}
%%%%%%%%%%%%%%%%%%%%%%%%%%%%%%

\noindent
Let us recall the notion of a continuous, real-valued semimartingale   
 \begin{equation}
\label{Semi}
   X   (\cdot)\, =\,  X (0) + M(\cdot) +C(\cdot) \,,
  \end{equation}
      where $\, M (\cdot)\,$ is a continuous local martingale and $\, C (\cdot)\,$ a continuous process of finite first variation       such that $\, M(0) = C(0) =0\,$. The {\it local time} $\,  L^{ X } (t; \xi)\,$ accumulated at a given ``site'' $\, \xi \in \R\,$  over the time-interval $\, [0,t]\,$ by this process,  is    
      $$
      L^{ X}  (t; \xi)\,:=\,\lim_{\varepsilon \downarrow 0} \, { 1 \over \,2 \, \varepsilon\,} \int_0^t \, \mathbf{1}_{ \{ \xi \le     X    (s)  <  \xi + \varepsilon \} } \, \mathrm{d} \langle X \rangle ( s)~~~~~~~~~~~~~~~~
      $$
     \begin{equation}
\label{Tanaka}
~~~~~~~~~~~~~~~~~~~~~~~~~~~~~~~~~~~~~ = \,  \big(  X  (t) - \xi \big)^+ - \big(  X  (0) - \xi \big)^+  - \int_0^t \mathbf{ 1}_{\{  X (s) > \xi \} }\, \mathrm{d}   X   (s)  \,,
\end{equation}

\noindent
where $\, \langle X \rangle ( \cdot) \equiv \langle M \rangle ( \cdot)\,$. For every fixed $\, \xi \in \R\,$ this defines a nondecreasing, continuous and adapted process $\, L^{ X}  ( \cdot\,; \xi )\,$  which is flat off the   set     $\, \{ t \ge 0 :  X ( t) = \xi \} \,$, namely
\begin{equation}
\label{flat}
\int_0^\infty \, \mathbf{ 1}_{ \{  X  (t) \neq \xi\} }\, \mathrm{d}   L^{  X}  ( t; \xi) \,=\,0 \,   \,; \quad \hbox{and we  have also the property} ~~   \int_0^\infty \, \mathbf{ 1}_{ \{   X  (t) = \xi\} }\, \mathrm{d} \langle X \rangle (t) \,=\, 0\,.
\end{equation}
On the other hand, for each fixed $\, T \in (0, \infty)\,$ the mapping $\, \xi \mapsto L^X (T\,; \, \xi \,)\,$ is almost surely RCLL (Right-Continuous on $[0, \infty)$, with Limits from the Left on $(0, \infty)$) paths, and has jumps of size
 \begin{equation}
\label{jump}
  L^{  X}  (T; \xi) - L^{  X }  (T; \xi-)\,=  \,\int_0^T \, \mathbf{ 1}_{ \{  X  (t) = \xi \} }\, \mathrm{d}    X (t) \,=\, \int_0^T \, \mathbf{ 1}_{ \{   X  (t) = \xi\} }\, \mathrm{d}   C (t) \,.
  \end{equation}
We shall employ also the notation
   \begin{equation}
\label{symm}
\widehat{L}^{  X}  (T; \xi)  \,:=\, { 1 \over \,2\,} \left( L^{  X}  (T; \xi) + L^{  X }  (T; \xi-) \right)   \end{equation}
  for the so-called ``symmetric local time" accumulated at the site $\, \xi\,$ over the time interval $  [0,T]\,$.    For these local times  we prefer to use the simpler notation 
    \begin{equation}
\label{NOT} 
 L^{  X}  (\cdot )\, \equiv \, L^{  X}  (\cdot\, ; 0)\,, \qquad  L^{  X}_-  (\cdot )\, \equiv \, L^{  X}  (\cdot\, ; 0-)\,, \qquad   \widehat{L}^{  X}  (\cdot ) \equiv  \widehat{L}^{  X}  (\cdot\, ; 0) 
  \end{equation}
when we evaluate them at the origin $\, \xi =0\,$,  and note 
      \begin{equation}
\label{NOT1}
L^{  X}_-  (\cdot )\, = \, L^{ -X}  (\cdot   )\,=\,\lim_{\varepsilon \downarrow 0} \, { 1 \over \,2 \, \varepsilon\,} \int_0^{\, \cdot} \, \mathbf{1}_{ \{ 0 \ge   X    (t)  >  -  \varepsilon \} } \, \mathrm{d} \langle X \rangle ( t)\,.
  \end{equation}
  Finally, we recall the occupation time density formulae
 \begin{equation}
\label{OTD}  
\int_0^{\,\cdot} h (X(t))\, \de \langle X \rangle (t)\,=\,   2 \int_\R L^X (\cdot\,;  \xi)\, h (\xi)\, \de \xi  \,=\,   2 \int_\R \widehat{L}^X (\cdot\,;  \xi)\, h (\xi)\, \de \xi \,,
\end{equation}
valid for every Borel measurable function $\, h : \R \ra [0, \infty)\,$, as well as the \textsc{It\^o-Tanaka} formulae
 \begin{equation}
\label{ITOT1} 
f(X(\cdot)) \,=\, f(X(0)) + \int_0^{\,\cdot} D^- f (X(t))\, \de X(t) + \int_\R L^X (\cdot\,;  \xi) \, f^{\prime \prime} (\de \xi)\,,
\end{equation}
 \begin{equation}
\label{ITOT2} 
f(X(\cdot)) \,=\, f(X(0)) + {1 \over \,2\,} \int_0^{\,\cdot} \big( D^+ f (X(t)) + D^- f (X(t))\big) \, \de X(t) + \int_\R \widehat{L}^X (\cdot\,; \xi) \, f^{\prime \prime} (\de \xi)\,.~~~
\end{equation}
Here $\, f: \R \ra \R\,$ is the difference of two convex functions, $\, D^{\pm} f (\cdot)\,$ denote its derivatives from left and right, and $\, f^{\prime \prime} (\cdot)\,$ denotes its second derivative measure. 

\medskip

%%%%%%%%%%%%%%%%%%%%%%%%%
\subsection{Tanaka Formulae}
 \label{sec2a}
%%%%%%%%%%%%%%%%%%%%%%%%%%%%%%

\noindent
For a continuous, real-valued  semimartingale $\, X(\cdot)\,$ as in (\ref{Semi}), and with the    conventions
\begin{equation}
\label{sgn}
  \overline{\text{sgn}} \,(x) \,:=\, \mathbf{ 1}_{(0,  \infty)} (x)  - \mathbf{ 1}_{(- \infty, 0)} (x)   \,, \quad  
   \text{sgn}  \,(x) \,:=\, \mathbf{ 1}_{(0,  \infty)} (x)  - \mathbf{ 1}_{(- \infty, 0]} (x)\,, \qquad x \in \mathbb{R}
\end{equation}
for the symmetric and the left-continuous versions of the signum function, we obtain from (\ref{ITOT1}),  (\ref{ITOT2})  the \textsc{Tanaka} formulae
 \begin{equation}
\label{Tanaka2}
  | X (\cdot) - \xi| \,=\,   | X (0) - \xi| + \int_0^\cdot \text{sgn} \big( X(t) - \xi\big) \, \mathrm{d} X(t) \,+\, 2\, L^X (\cdot\, ; \xi) 
\end{equation}
\begin{equation}
\label{Tanaka1}
  %| X (\cdot) - \xi | 
 ~~~~~~~~~~~~~~~~~~~~ \,=\,   | X (0) - \xi| + \int_0^\cdot \overline{\text{sgn}} \big( X(t) - \xi \big) \, \mathrm{d} X(t) \,+\,  2\,  \widehat{L}^{X} (\cdot\,; \xi)\,. 
\end{equation}
 Applying (\ref{Tanaka2}) with $\, \xi =0\,$ to the continuous, nonnegative semimartingale $\, | X(\cdot)|\,$, then comparing with the expression of (\ref{Tanaka2}) itself, we obtain the companion 
\begin{equation}
\label{2.19.c}
2\, L^{  X}  (\cdot ) - L^{  |X|}  (\cdot)\,=\, \int_0^\cdot \, \mathbf{ 1}_{ \{  X  (t) = 0 \} }\, \mathrm{d}    X (t) \,=\, \int_0^\cdot \, \mathbf{ 1}_{ \{   X  (t) = 0 \} }\, \mathrm{d}   C (t) 
\end{equation}
of the property (\ref{jump}). For  the theory that undergirds these  results we refer, for instance, to   \textsc{Karatzas \& Shreve} \cite{MR1121940}, %(1991), 
section 3.7.

%   SECTION  3

%%%%%%%%%%%%%%%%%%%%%%%%%
\section{Analysis}
\label{sec3}
%%%%%%%%%%%%%%%%%%%%%%%%%%%%%%

\noindent
Let us suppose that such a probability space as stipulated in section \ref{sec1} has been constructed, and on it a pair $\, B_1 (\cdot)\,$, $  B_2 (\cdot)\,$ of   independent standard Brownian motions, as well as two continuous, nonnegative semimartingales $\,  X_1(\cdot), \,X_2 (\cdot) \,$   such that the dynamics (\ref{1.3})-(\ref{1.4})   are satisfied. We import the notation of \textsc{Fernholz  et al.} \cite{FIKP2012}  
: in addition to (\ref{1.1}), we set 
\begin{equation}
\label{2.1}
 \nu \,=\,   g-h\,, \quad y \,=\, x_1 - x_2 \,, \quad z \,:=\, x_1 + x_2 >0\,, \quad  r_1\,=\, x_1 \vee x_2\,, \quad  r_2\,=\, x_1 \wedge x_2\,,
\end{equation}
and introduce the difference and the sum of the two component processes, namely
\begin{equation}
\label{2.2}
Y (\cdot ) \,:=\,   X_1 (\cdot) - X_2 (\cdot)\,, \qquad Z (\cdot )  \,:=\,   X_1 (\cdot) + X_2 (\cdot)\,.
\end{equation}

%%%%%%%%%%%%%%%%%%%%%%%%%
\subsection{Auxiliary Brownian Motions}
\label{sec3.4}
%%%%%%%%%%%%%%%%%%%%%%%%%%

\noindent
We introduce also the two planar Brownian motions $\, \big( W_1 (\cdot), W_2 (\cdot) \big) $ and  $\, \big( V_1 (\cdot), V_2 (\cdot) \big) $,  given  respectively by
\begin{align}
\label{2.3}
W_1 (\cdot) \,&:=\, \int_0^\cdot \mathbf{ 1}_{ \{ Y (t) > 0 \} } \, \mathrm{d} B_1 (t)
- \int_0^\cdot \mathbf{ 1}_{ \{ Y (t) \le 0 \} } \, \mathrm{d} B_2 (t) \,, \\ 
\label{2.4}
W_2 (\cdot) \,&:=\, \int_0^\cdot \mathbf{ 1}_{ \{ Y (t) \le 0 \} } \, \mathrm{d} B_1 (t) - \int_0^\cdot \mathbf{ 1}_{ \{ Y (t) > 0 \} } \, \mathrm{d} B_2 (t)   
\end{align}
and 
\begin{align}
\label{2.5}
V_1(\cdot)\,  &:=\,  \int_0^\cdot    \mathbf{ 1}_{ \{ Y (t) > 0 \} } \, \mathrm{d} B_1
(t) +   \int_0^\cdot \mathbf{ 1}_{ \{ Y (t) \le 0 \} }  \, \mathrm{d} B_2 (t)
\,, \\
\label{2.6}
V_2(\cdot)  \,&:= \, \int_0^\cdot    \mathbf{ 1}_{ \{ Y (t) \le 0 \} } \, \mathrm{d} B_1 (t) + \int_0^\cdot  \mathbf{ 1}_{ \{ Y (t) > 0 \} }  \, \mathrm{d} B_2 (t)   \,.
\end{align}

\noindent
 Finally, we construct the      Brownian motions $\,  W  (\cdot)  $, $\,  V  (\cdot)  \,$,  $\,     Q (\cdot)\,$ and $\, W^\flat (\cdot) \,   $,  $\, V^\flat (\cdot) \,   $, $\, U^\flat (\cdot) \,   $  as
\begin{equation}
\label{2.7}
W (\cdot) \,:=\, \rho\, W_1 (\cdot) \,+\, \sigma \, W_2 (\cdot)\,,  \quad 
V (\cdot) \,:=\, \rho\, V_1 (\cdot) \,+\, \sigma \, V_2 (\cdot)\,, \quad Q (\cdot) :=\sigma \, V_1 (\cdot)+ \rho\, V_2 (\cdot)  \,,~
\end{equation}
\begin{equation}
\label{2.9.b}
W^{\,\mathbf{ \flat}}( \cdot) := \rho\, W_1 (\cdot) - \sigma \, W_2 (\cdot)\,, \quad  V^{\,\mathbf{ \flat}}( \cdot) := \rho\, V_1 (\cdot) - \sigma \, V_2 (\cdot)\,,\quad U^\flat (\cdot) :=\sigma \, W_1 (\cdot)-  \rho\, W_2 (\cdot) \,;
\end{equation}
 we note the independence of  $\,     Q (\cdot)\,$ and $\, W (\cdot)  \,  $, the independence of  $\,     Q (\cdot)\,$ and $\, V^\flat (\cdot)  \,  $, and observe  the intertwinements
\begin{equation}
\label{2.10}
V_j(\cdot) = (-1)^{j+1}\int_0^{\,\cdot}  \text{sgn}  \big( Y (t) \big)   \mathrm{d} W_j (t)   \quad    (j=1, 2) 
\,, \qquad   V^\flat(\cdot)   =  \int_0^{\,\cdot}   \text{sgn}  \big( Y (t) \big)   \mathrm{d}  W (t)   
\end{equation}
and
\begin{equation}
\label{2.10.a}
    V(\cdot)   =  \int_0^{\,\cdot}   \text{sgn}  \big( Y (t) \big)   \mathrm{d}  W^\flat (t)\,, \qquad  Q(\cdot)   =  \int_0^{\,\cdot   }\text{sgn}  \big( Y (t) \big)   \mathrm{d}  U^\flat (t)\, .
\end{equation}

%%%%%%%%%%%%%%%%%%%%%%%%%
\subsection{The Difference and the Sum}
\label{sec3.3}
%%%%%%%%%%%%%%%%%%%%%%%%%%

\noindent
 After this preparation,  we  observe that the difference $\, Y(\cdot)\,$ and the sum $\, Z  (\cdot)\,$ from (\ref{2.2}) satisfy, respectively, the stochastic integral equation 
\begin{equation}
\label{2.11}
Y (\cdot) \,=\, y - \lambda \int_0^{\, \cdot}  \mathrm{sgn}  \big( Y(t)\big)\,   \mathrm{d} t  + (1-\zeta) \, L^Y (\cdot) - (1-\eta) \, L^Y_- (\cdot ) + W(\cdot)    
\end{equation}
which involves both the right- and the left- local time at the origin of its  solution process $\, Y(\cdot)\,$, and the identity 
\begin{equation}
\label{2.12}
 Z (t) \,=\,z + \nu \,t +  V (t) +   \big(1-\overline{\zeta} \,\big) \, L^Y (t ) + \big(1-\overline{\eta}\, \big) \, L^Y_- (t )\,, \qquad 0 \le t < \infty\,.  
\end{equation}

\medskip
\noindent
We have used here the notation of (\ref{1.1}), (\ref{2.1}), (\ref{1.6}), as well as 
\begin{equation}
\label{1.7}
     \overline{\zeta} \,:=\,   {\,  \zeta_1 + \zeta_2 \, \over 2} \,, \qquad \overline{\eta} \,:=\,   {\,  \eta_1 + \eta_2 \, \over 2}  \,.
\end{equation}   
We note from (\ref{flat}) and (\ref{2.11}) that $\, Y(\cdot)\,$ is a continuous semimartingale with 
\begin{equation}
\label{ZS}
\int_0^\infty \mathbf{ 1}_{ \{ Y(t)=0 \}  }\, \de t\,=\, \int_0^\infty \mathbf{ 1}_{ \{ Y(t)=0 \}  }\, \de \langle Y \rangle (t)\,=\, 0 \,, \qquad \mathrm{a.s.},
\end{equation}  
and that on the strength of  (\ref{jump}), (\ref{flat})  we have
$$
L^{Y}  (\cdot\,; 0) - L^{ Y}  (\cdot\,; 0-)\,=  \,\int_0^{\,\cdot} \, \mathbf{ 1}_{ \{  Y  (t) = 0 \} }\, \big( (1-\zeta)\, \mathrm{d}   L^Y (t; 0)  - (1-\eta)\, \mathrm{d}   L^Y (t; 0-) \big) 
$$
$$
~~~~~~~~=\,  (1-\zeta)\,     L^Y (\cdot\,; 0)  - (1-\eta)\,    L^Y (\cdot\,; 0-)
$$
or equivalently 
\begin{equation}
\label{LR}
\zeta \, L^{Y}  (\cdot ) \,=\, \eta \, L^{Y}_-  (\cdot\, ) \,.
\end{equation}

\medskip
\noindent
From this relationship and (\ref{flat})-(\ref{symm}), (\ref{2.19.c})   we obtain
\begin{equation}
\label{alpha}
2\, \widehat{L}^Y (\cdot)\,=\, L^{|Y|} (\cdot)\qquad \mathrm{and} \qquad 
 L^Y (\cdot)\,=\, \alpha \,L^{|Y|} (\cdot)\,, ~~~ L^Y_- (\cdot)\, = \, (1 - \alpha) \,  L^{|Y|} (\cdot)\,,
\end{equation}
where we introduce as in (\ref{1.5}) the ``skewness parameter" 
\begin{equation}
\label{SP}
\alpha \,:=\, { \eta \over \, \eta + \zeta\,} \,.
\end{equation}
$\bullet~$ 
With this notation,  and recalling (\ref{ZS}) and (\ref{alpha}), we see that equation (\ref{2.11}) takes the form 
\begin{equation}
\label{SBBBM}
Y (\cdot) \,=\, y - \lambda \int_0^{\,\cdot}  \overline{\text{sgn}}  \big( Y(t)\big)\,   \mathrm{d} t \, +\, 2\, \big(2\,\alpha -1\big) \, \widehat{L}^{\,Y} (\cdot) \, + W(\cdot)  
\end{equation}
of the equation  for a {\it Skew Brownian Motion with Bang-Bang drift} (Skew Bang-Bang Brownian Motion, or SBBBM for short). This is a very close relative of the Skew Brownian motion, that was  introduced by \textsc{It\^o \& McKean} \cite{MR0154338}%(1963)
, \cite{MR0345224} %(1974) 
and was furhter studied by \textsc{Walsh} \cite{TempsLocaux78}%(1978)
, \textsc{Harrison \& Shepp} \cite{MR606993} %(1981)
; see \textsc{Lejay} \cite{MR2280299} %(2006) 
for a comprehensive survey.  

\smallskip
The diffusion process $\, Y(\cdot)\,$ of (\ref{SBBBM}) is studied in detail in section \ref{sec5}. It is a strong \textsc{Markov} and \textsc{Feller} process, whose transition probabilities can be computed explicitly; see (\ref{TDF1})-(\ref{TDF3}) below. In particular, it is shown in section \ref{sec5} that,  for $\, 0 \le \alpha \le 1\,$,   the stochastic equation (\ref{SBBBM}) has a pathwise unique, strong solution  and that    the filtration identities
\begin{equation}
\label{YW}
\mathfrak{ F}^Y (t) \,=\,\mathfrak{ F}^W (t)\,, \qquad \forall ~~ t \in [0, \infty)
\end{equation}
hold. Here and in what follows, given a process $\,\Xi  : [0, \infty) \times \Omega \rightarrow \R^d\,$ with values in some Euclidean space and RCLL paths, we shall  use the convention
$$
\mathbf{F}^{\,\Xi} = \{ \mathfrak{F}^{\,\Xi } (t) \}_{0 \le t < \infty }  
$$
for the smallest filtration  to which $\, \Xi (\cdot)\,$ is adapted that satisfies the ``usual conditions'' of right continuity and augmentation by sets of measure zero.  

\medskip
\noindent
$\bullet~$ 
Similarly, and with the notation of (\ref{SP}), the expression 
  (\ref{2.12}) takes the form 
\begin{equation}
\label{2.12.a}
X_1 (t) + X_2 (t)\,=\,  Z (t) \,=\,z + \nu \,t +  V (t) +  2\, \big(1-   \beta \big) \,\widehat{ L}^{\,Y} (t )  \,, \qquad 0 \le t < \infty   
\end{equation}
where, as in subsection \ref{sec1.1},  we set 
\begin{equation}
\label{SP1}
  \beta \,:=\, {\,  \eta\, \overline{\zeta} + \zeta\, \overline{\eta} \, \over \, \eta + \zeta\,}\,.
\end{equation}

\medskip
\begin{rem}
 \label{EZ0}
  It is clear from (\ref{LR}) that the equation (\ref{2.11}) can be written as 
\begin{equation}
\label{YLPM}
Y (\cdot) \,=\, y - \lambda \int_0^{\, \cdot} \text{sgn}  \big( Y(t)\big)\,   \mathrm{d} t  +  L^Y (\cdot)  - L^Y_- (\cdot)+ W(\cdot)\, .
\end{equation}
To wit:  skew Brownian motion with bang-bang drift solves the equation (\ref{YLPM}), for {\it any} value $\, \alpha \in [0,1]\,$ of its skewness parameter. We conclude that {\it uniqueness in distribution fails for this equation (\ref{YLPM}), thus also for the equation (\ref{2.11}) that governs the difference $\, Y (\cdot)= X_1 (\cdot) - X_2(\cdot)\,$ when $\, \eta = \zeta =0\,$.} 

%\smallskip
In particular,   uniqueness in distribution cannot possibly hold for  the system (\ref{1.3}), (\ref{1.4})   in this case $\, \eta = \zeta =0\,$.
\end{rem}

\begin{rem}
 \label{EZ1}
 When $\, \eta = - \zeta \neq 0\,$ we get $\, L^Y  (\cdot) +  L^Y_- (\cdot) \equiv 0\,$ from  (\ref{LR}), thus 
 \begin{equation}
\label{ELT} 
L^Y  (\cdot) \, \equiv \, L^Y_- (\cdot)\, \equiv \,0\,,
\end{equation}
 and the equation (\ref{2.11}) takes the form of Brownian motion with bang-bang drift
 \begin{equation}
\label{KS}
Y (\cdot) \,=\, y - \lambda \int_0^{\, \cdot} \text{sgn}  \big( Y(t)\big)\,     \mathrm{d} t      + \,W(\cdot)    \,.
\end{equation}
This diffusion process was studied in detail  by \textsc{Karatzas \& Shreve} \cite{MR744236}%(1984)
, who computed its transition probabilities and the joint distribution of the triple $\, \big(\,Y(t), \,L^Y (t), \,\int_0^t \mathbf{ 1}_{\{ Y(s)>0\}}\, \mathrm{d}s \big)\,$.   This diffusion {\it does} accumulate local time at the origin: indeed, on the strength of (\ref{jump}), (\ref{flat}), we have   
$$
L^Y (\cdot) - L^Y_- (\cdot) \,=\, \int_0^{\, \cdot} \mathbf{ 1}_{ \{ Y(t) =0\} } \, \mathrm{d} Y(t)\,=\, \lambda \int_0^{\, \cdot} \mathbf{ 1}_{ \{ Y(t) =0\} } \, \mathrm{d} t \,=\, \lambda \int_0^{\, \cdot} \mathbf{ 1}_{ \{ Y(t) =0\} } \, \mathrm{d} \langle W \rangle ( t)\,=\,0 
$$
almost surely, but also  $\, \p ( L^Y  ( t) = L^Y_- ( t) >0)>0\,$   for every $\, t \in (0, \infty)\,$; this    contradicts (\ref{ELT}). In fact, we have $\, \p ( L^Y  ( t) = L^Y_- ( t) >0)=1\,$  for $\, y =0\,$. 

\smallskip
We conclude that {\it the equation (\ref{2.11}) for the difference $\, Y (\cdot)= X_1 (\cdot) - X_2(\cdot)\,$ has no solution in the case} $\, \eta = - \zeta \neq 0\,$. Thus,  the system (\ref{1.3}), (\ref{1.4}) cannot possibly have a solution in this case.
 \end{rem}

%%%%%%%%%%%%%%%%%%%%%%%%%
\subsection{Auxiliary Systems}
\label{sec3.5}
%%%%%%%%%%%%%%%%%%%%%%%%%%

\noindent
From the equations of (\ref{2.11}), (\ref{2.12}) and using the notation in (\ref{2.2})-(\ref{2.7}), we obtain a system of stochastic differential equations
$$
\mathrm{d} X_1 (t)\,=\,  \Big( g \, \mathbf{ 1}_{ \{ X_1 (t) \le  X_2 (t) \} } - h \, \mathbf{ 1}_{ \{ X_1 (t) >  X_2 (t) \} } \Big) \, \mathrm{d} t \,+ \,   \rho\, \mathbf{ 1}_{ \{ X_1 (t) >  X_2 (t) \} }\, \mathrm{d} W_1 (t) ~~~~~~~~~~~~~~~~
$$
 \begin{equation}
\label{1.3.a}
~~~~~~~~~~~~~~+\, \sigma\,  \mathbf{ 1}_{ \{ X_1 (t) \le   X_2 (t) \} }\,      \mathrm{d} W_2 (t)   \,+ \,{\,1- \zeta_1 \, \over 2} \,  \mathrm{d} L^{X_1 - X_2} (t) \,+ \,{\,1- \eta_1 \, \over 2} \,  \mathrm{d} L^{X_2 - X_1} (t) \,,
\end{equation}
 $$
\mathrm{d} X_2 (t)\,=\,  \Big( g \, \mathbf{ 1}_{ \{ X_1 (t) >  X_2 (t) \} } - h \, \mathbf{ 1}_{ \{ X_1 (t) \le  X_2 (t) \} } \Big) \, \mathrm{d} t \,- \, \rho\, \mathbf{ 1}_{ \{ X_1 (t) \le  X_2 (t) \} }\, \mathrm{d} W_1 (t)
~~~~~~~~~~~~~$$
\begin{equation}
\label{1.4.a}
~~~~~~~~~~~ - \, \sigma\,  \mathbf{ 1}_{ \{ X_1 (t) >   X_2 (t) \} }\,      \mathrm{d} W_2 (t)  \,+ \,{\,1- \zeta_2 \, \over 2} \,  \mathrm{d} L^{X_1 - X_2} (t) \,+ \,{\,1- \eta_2 \, \over 2} \,  \mathrm{d} L^{X_2 - X_1} (t)\,,
\end{equation}

\medskip
\noindent
quite similar to that of (\ref{1.3}), (\ref{1.4}), but now driven by the planar Brownian motion $\, (W_1 (\cdot), W_2 (\cdot))\,$. In a totally analogous manner, we obtain also the system  
$$
\mathrm{d} X_1 (t)\,=\,  \Big( g \, \mathbf{ 1}_{ \{ X_1 (t) \le  X_2 (t) \} } - h \, \mathbf{ 1}_{ \{ X_1 (t) >  X_2 (t) \} } \Big) \, \mathrm{d} t \,+ \,   \rho\, \mathbf{ 1}_{ \{ X_1 (t) >  X_2 (t) \} }\, \mathrm{d} V_1 (t) 
~~~~~~~~~~~~~~~~~~~~~$$
 \begin{equation}
\label{1.3.b}
~~~~~~~~~+\, \sigma\,  \mathbf{ 1}_{ \{ X_1 (t) \le   X_2 (t) \} }\,      \mathrm{d} V_2 (t)  \,+ \,{\,1- \zeta_1 \, \over 2} \,  \mathrm{d} L^{X_1 - X_2} (t) \,+ \,{\,1- \eta_1 \, \over 2} \,  \mathrm{d} L^{X_2 - X_1} (t) \,,
\end{equation}
 $$
\mathrm{d} X_2 (t)\,=\,  \Big( g \, \mathbf{ 1}_{ \{ X_1 (t) >  X_2 (t) \} } - h \, \mathbf{ 1}_{ \{ X_1 (t) \le  X_2 (t) \} } \Big) \, \mathrm{d} t \,+ \, \rho\, \mathbf{ 1}_{ \{ X_1 (t) \le  X_2 (t) \} }\, \mathrm{d} V_1 (t)
~~~~~~~~~~~~~~~~~~$$
\begin{equation}
\label{1.4.b}
~~~~~~~~~~~~~~~+\, \sigma\,  \mathbf{ 1}_{ \{ X_1 (t) >   X_2 (t) \} }\,      \mathrm{d} V_2 (t)  \,+ \,{\,1- \zeta_2 \, \over 2} \,  \mathrm{d} L^{X_1 - X_2} (t) \,+ \,{\,1- \eta_2 \, \over 2} \,  \mathrm{d} L^{X_2 - X_1} (t)\,,
\end{equation}

\medskip
\noindent
 now driven by the planar Brownian motion $\, (V_1 (\cdot), V_2 (\cdot))\,$.

 %%%%%%%%%%%%%%%%%%%%%%%%%
\subsection{Skew Representations}
\label{sec3.2}
%%%%%%%%%%%%%%%%%%%%%%%%%%

\noindent
In light of the \textsc{Tanaka} formula  (\ref{Tanaka1}), of the equation (\ref{SBBBM}) for the semimartingale $\, Y(\cdot)\,$, and of the last intertwinement in (\ref{2.10}), we  represent now the size of the ``gap'' between $\, X_1 (t)\,$ and $\, X_2 (t)\,$ as 
\begin{equation}
\label{2.13}
 \big| Y (t) \big|   \,=\,| y | - \lambda \,t   +  V^\flat (t) + 2\, \widehat{L}^{\,Y} (t)
 ~~~~~~~~~
\end{equation}
$$
~~~~~~~~~~~~~~~~~~~~~~~~~~~~~~~=\,| y | - \lambda \,t   +  V^\flat (t) +   \widehat{L}^{\,|Y|} (t)\,, \qquad 0 \le t < \infty  \,.
$$

\medskip
\noindent
With the help of (\ref{2.10}), (\ref{alpha}), let us write the first Brownian motion in (\ref{2.9.b}) as
$$
W^\flat (\cdot) \,=\, \gamma\, W (\cdot) + \delta\, U^\flat (\cdot)\,, \quad ~~\mathrm{where} \quad ~\gamma := \rho^2 - \sigma^2\,, ~~ ~~\delta := \sqrt{1-\gamma^2\,} = 2 \, \rho \, \sigma\,.
$$
With this notation, and with the help of (\ref{2.10.a}), the Brownian motion $\, V(\cdot)\,$ in (\ref{2.7}) takes the form
$$
V (t) \,=\, \gamma \,    V^\flat  (t) +   \delta\, Q (t) \,=\, \gamma \, \big( |Y(t)| - |y| + \lambda \, t - 2 \, \widehat{L}^Y (t) \big) + \delta \, Q(t)\,, \qquad 0 \le t < \infty\,.
$$
We recall here from (\ref{2.10.a}) the standard Brownian motion $\, Q(\cdot)\,$ which, being independent of $\, W(\cdot)\,$, is also independent of the process $\, Y(\cdot)\,$ in light of (\ref{YW}). 

\smallskip
In conjunction with $\, X_1(t) - X_2 (t) = Y(t)\,$ and the representation 
(\ref{2.12.a}) for $\, X_1(t)+ X_2 (t)  $, and with the notation
$$
\mu\,:=\, { 1 \over \,2\,}\, \big( \nu + \lambda \, \gamma \big)\,=\, g \, \rho^2 - h\, \sigma^2\,,
$$
we obtain from this expression the   {\it skew representations} for the component processes themselves
\begin{equation}
\label{2.23b}
X_1 (t) = x_1 + \mu\, t + \rho^2  \big(  Y^+ (t) - y^+ \big) - \sigma^2 \big( Y^- (t) - y^- \big) + \big(   1    -   \beta - \gamma \big)\, \widehat{L}^{\,Y} (t) + \rho  \sigma  Q (t) ~
\end{equation}
\begin{equation}
\label{2.24}
X_2 (t) = x_2 + \mu\, t - \sigma^2 \,\big(  Y^+ (t) - y^+ \big) + \rho^2  \big(  Y^- (t) - y^- \big) + \big(   1   -   \beta - \gamma \big)\, \widehat{L}^{\,Y} (t)   +    \rho  \sigma  Q (t) ~  
\end{equation}

 \medskip
\noindent
in terms of the paths of the skew Brownian motion process $\, Y(\cdot)\,$ with bang-bang drift, and of the {\it independent} Brownian motion $\, Q(\cdot)\,$. In particular, this shows that {\it uniqueness in distribution} holds for the system of stochastic differential equations (\ref{1.3}), (\ref{1.4}).

Similar reasoning shows  that   uniqueness in distribution   holds  also   for each of the systems (\ref{1.3.a}), (\ref{1.4.a}) and (\ref{1.3.b}), (\ref{1.4.b}). 

\begin{rem}
 \label{RFLSBBBM}
It is clear from (\ref{2.13}) that the absolute value of the skew Brownian motion with bang-bang drift in (\ref{SBBBM}), for {\it any} value $\, \alpha \in [0,1]\,$ of the skewness parameter, is Brownian motion with drift $\, - \lambda\,$ and reflection at the origin. Arguing as in \textsc{Walsh} \cite{TempsLocaux78} %(1978)
, Proposition 1, one can conclude that every  diffusion process $\, Y(\cdot)\,$,  for which $\, | Y(\cdot)|\,$ is Brownian motion with drift $\, - \lambda\,$  and  reflected at the origin, is a skew Brownian motion with bang-bang drift.
\end{rem}

%   SECTION  4

%%%%%%%%%%%%%%%%%%%%%%%%%
\section{ Synthesis}
\label{sec4}
%%%%%%%%%%%%%%%%%%%%%%%%%%%%%%

\noindent
We  reverse now the steps of the   analysis in section \ref{sec3}. Let us start
with a filtered probability space $\, ( \Om,   \mathfrak{F}, \p)\,$, $
\mathbf{F} = \{ \mathfrak{F} (t) \}_{0 \le t < \infty }\,$ and with two independent, standard Brownian motion $\,  W_1  (\cdot) \,$, 
$\,  W_2  (\cdot) \,$ on it;   we shall assume 
$\, \mathbf{F} \equiv \mathbf{F}^{\, (W_1, W_2)}\,$, i.e., that the $\, \mathbf{F}  \,$ is the smallest filtration  satisfying the usual conditions, to which   the planar Brownian motion $\, ( W_1  (\cdot) , \,   W_2  (\cdot) )\,$ is adapted.  

With     given real constants $\, \zeta_1\,,\, \zeta_2\,,\, \eta_1\,,\, \eta_2 \,$ and  nonnegative constants $\, g  \,$, $\, h  \,$, $\, \rho  \,$,
$\, \sigma \,$ that satisfy (\ref{1.1}) and   (\ref{1.5}), with  a given
vector $\, (x_1, x_2 ) \in \R^2\,$, and with the notation of (\ref{2.1}), we
construct the pairs of independent Brownian motions  
\begin{align}
  \label{3.5.a}
  W(\cdot) \,:&=\, \rho\, W_1 (\cdot) \,+\, \sigma \, W_2 (\cdot)\,,&
  % \qquad 
  U^{\, \flat}(\cdot) \,:&=\, \sigma \, W_1 (\cdot) \,-\, \rho  \, W_2 (\cdot) \\
  % \end{equation}
  \intertext{and} 
  % \begin{equation}
  \label{3.5.ab}
  U(\cdot) \,:&=\, \sigma \, W_1 (\cdot) \,+\, \rho \, W_2 (\cdot)\,,&
  % \qquad 
  W^{\, \flat}(\cdot) \,:&=\, \rho\, W_1 (\cdot) \,-\, \sigma \, W_2 (\cdot)  
\end{align}
as in   (\ref{2.9.b}), (\ref{2.7}). Clearly, $\,   \mathbf{F}^{\, (W_1, W_2)} \equiv \mathbf{F}^{\, (W , U^{\flat})} \equiv \mathbf{F}^{\, (U, W^{  \flat})}\,$.  

\smallskip
We construct also the pathwise unique, strong solution $\, Y(\cdot)\,$ of the
stochastic equation (\ref{SBBBM}) driven by the Brownian motion $\, W(\cdot)\,$ introduced in (\ref{3.5.a}).    With the process $\, Y(\cdot)\,$ thus in place, we introduce by analogy with (\ref{2.10}) the  independent Brownian motions
\begin{equation}
  \label{3.5}
  V_1(\cdot) \,=\, \int_0^{\,\cdot} \sgn \big( Y (t) \big) \, \mathrm{d} W_1 (t)\,,  \qquad  
  V_2(t) \,=\, -\int_0^{\,\cdot}  \sgn \big( Y (t) \big) \, \mathrm{d} W_2 (t) \,,
\end{equation}
and by analogy with (\ref{2.7}), (\ref{2.9.b}) the two additional pairs  of independent Brownian motions   
\begin{align}
  \label{3.5.b}
  V(\cdot) \,:&=\, \rho\, V_1 (\cdot) \,+\, \sigma \, V_2 (\cdot)\,, &
  Q^{\, \flat}(\cdot) \,:&=\, \sigma \, V_1 (\cdot) \,-\, \rho  \, V_2 (\cdot) 
  \intertext{and} 
  \label{3.5.bb}
  Q(\cdot) \,:&=\, \sigma \, V_1 (\cdot) \,+\, \rho \, V_2 (\cdot)\,, &
  V^{\, \flat}(\cdot) \,:&=\, \rho\, V_1 (\cdot) \,-\, \sigma \, V_2 (\cdot) \,.
\end{align}

\medskip
\noindent
$\bullet~$ 
We introduce also the continuous martingales
\begin{equation}
  \label{3.8}
  M_1 (\cdot) \,:=\, 
  \int_0^{\,\cdot}  \Big( \, \rho \, \mathbf{ 1}_{ \{ Y (t) > 0 \} } \, \mathrm{d} W_1
  (t)  + \sigma \,  \mathbf{ 1}_{ \{ Y (t) \le 0 \} }  \, \mathrm{d} W_2 (t)
  \Big)  
\end{equation}
$$
~~~~~~~~~~~=\, 
  \int_0^{\,\cdot}  \Big( \, \rho \, \mathbf{ 1}_{ \{ Y (t) > 0 \} } \, \mathrm{d} V_1
  (t)  + \sigma \,  \mathbf{ 1}_{ \{ Y (t) \le 0 \} }  \, \mathrm{d} V_2 (t)
  \Big)\,,
$$
\begin{equation}
  \label{3.9}
~~~~~~~  M_2 (\cdot) \,:=\, -
  \int_0^{\,\cdot}  \Big(   \rho \, \mathbf{ 1}_{ \{ Y (t) \le  0 \} } \, \mathrm{d} W_1 (t)  + \sigma \,  \mathbf{ 1}_{ \{ Y (t) > 0 \} }  \, \mathrm{d} W_2 (t) \Big)  
\end{equation}
$$
~~~~~~~~~~~~=\, \int_0^{\,\cdot}  \Big(   \rho \, \mathbf{ 1}_{ \{ Y (t) \le  0 \} } \, \mathrm{d} V_1 (t)  + \sigma \,  \mathbf{ 1}_{ \{ Y (t) > 0 \} }  \, \mathrm{d} V_2 (t) \Big)\,,
$$

\smallskip
\noindent
with $ \, \langle M_1, M_2 \rangle$ $ ( \cdot) \equiv   0\,$ and quadratic
variations 
$$
\langle M_1  \rangle (\cdot) = \int_0^{\,\cdot} \big( \, \rho^2 \, \mathbf{ 1}_{ \{ Y (t) > 0 \} }    + \sigma^2 \,  \mathbf{ 1}_{ \{ Y (t) \le 0 \} }  \big) \mathrm{d} t\,, \quad~~ \langle M_2  \rangle (\cdot)= \int_0^{\,\cdot} \big( \, \rho^2 \, \mathbf{ 1}_{ \{ Y (t) \le 0 \} }    + \sigma^2 \,  \mathbf{ 1}_{ \{ Y (t) > 0 \} }  \big) \mathrm{d} t\,.
$$
There exist then independent Brownian motions $\, B_1 (\cdot)\,$, $  B_2
(\cdot)\,$ on    our filtered probability space $\, ( \Om,   \mathfrak{F},
\p)\,$, $ \mathbf{F} = \{ \mathfrak{F} (t) \}_{0 \le t < \infty }\,$, so   the
continuous martingales of (\ref{3.8}), (\ref{3.9}) are cast in their \textsc{Doob} representations as  
\begin{equation}
  \label{3.11} 
  M_1 (\cdot) = 
  \int_0^{\,\cdot}    \Big(   \rho \, \mathbf{ 1}_{ \{ Y (t) > 0 \} }   + \sigma \,  \mathbf{ 1}_{ \{ Y (t) \le 0 \} }   \Big)\,   \mathrm{d} B_1 (t)\,,  \quad M_2 (\cdot) = 
  \int_0^{\,\cdot}  \Big(   \rho \, \mathbf{ 1}_{ \{ Y (t) \le  0 \} }   + \sigma \,  \mathbf{ 1}_{ \{ Y (t) > 0 \} }   \Big)\,   \mathrm{d} B_2 (t) 
\end{equation}
in terms of independent Brownian motions $\, B_1 (\cdot)\,$, $\, B_2 (\cdot)\,$; for instance, by taking   
\begin{align}
  \label{3.12} 
  B_1 (\cdot) \, &=  \,\hphantom{-}\int_0^{\,\cdot}  \Big(  \, \mathbf{ 1}_{ \{ Y (t) > 0 \} } \,
  \mathrm{d} W_1 (t)  +  \mathbf{ 1}_{ \{ Y (t) \le 0 \} }  \, \mathrm{d} W_2
  (t) \Big)\,, \\
  \label{3.13} 
  B_2 (\cdot)  \, & =    \, -\int_0^{\,\cdot}  \Big(  \mathbf{ 1}_{ \{ Y (s) \le 0 \} } \,
  \mathrm{d} W_1 (t)  +  \mathbf{ 1}_{ \{ Y (t) > 0 \} }  \, \mathrm{d} W_2
  (t) \Big)   \,.
\end{align}

\medskip
\noindent
$\bullet~$ 
Finally, we introduce   the continuous, $\, \mathbf{F}-$adapted processes 
\begin{equation}
  \label{3.6}
X_1(\cdot) :=   x_1 + 
  \int_0^{\,\cdot}  \left( \,g\, \mathbf{ 1}_{ \{ Y (t) \le 0 \} } - 
    h\,\mathbf{ 1}_{ \{ Y (t) > 0 \} } \right)   \mathrm{d} t\, +\,M_1 (\cdot)
    \,+\, {\,1- \zeta_1 \, \over 2} \,   L^{Y} (\cdot) \,+ \,{\,1- \eta_1 \, \over 2} \,    L^{Y}_- (\cdot)   
    \end{equation}
    and
 \begin{equation}
  \label{3.7}
 X_2(\cdot) :=x_2 +    \int_0^{\,\cdot}  \left( \,g\, \mathbf{ 1}_{ \{ Y (t) > 0 \} } - h\,\mathbf{ 1}_{ \{ Y (t) \le 0 \} } \right)   \mathrm{d} t \,+\, M_2 (\cdot)  \, +  {\,1- \zeta_2 \, \over 2} \,   L^{Y} (\cdot) +{\,1- \eta_2 \, \over 2} \,    L^{Y}_- (\cdot)\,.
\end{equation}

\medskip
\noindent
It is now easy to check $\, X_1 ( \cdot) - X_2 ( \cdot)= Y ( \cdot)\,$; and from this,   that   the vector process $\, (X_1 ( \cdot) ,\, X_2 ( \cdot) )\,$ solves the system (\ref{1.3})-(\ref{1.4}), as well as the systems (\ref{1.3.a})-(\ref{1.4.a}), (\ref{1.3.b})-(\ref{1.4.b}). It is also straightforward to verify the skew representations of (\ref{2.23b}), (\ref{2.24}).

\begin{rem} 
 \label{aux}
%\footnote{~Dear Bob, Dear Tomoyuki: I made a remark out of the observation, that this system of equations (\ref{1.3.c}), (\ref{1.4.c}) is also satisfied.} 
We note that the vector process $\, (X_1 ( \cdot) ,\, X_2 ( \cdot) )\,$ solves also the system of stochastic equations 
\begin{equation}
\label{1.3.c}
 \mathrm{d} X_1 (t)= \mathbf{ 1}_{ \{ X_1 (t) \le  X_2 (t) \} }   \big(\, g \,  \mathrm{d} t + \sigma\,        \mathrm{d} B_1 (t) \big)~~~~~~~~~~~~~~~~~~~~~~~~~~~~~~~~~~~~~~~~
 \end{equation}
 $$
~~~~~~~~~~~~~~~~~~~~~~~~~~~~~ \,+ \,\mathbf{ 1}_{ \{ X_1 (t) >  X_2 (t) \} }  \big( - h \,    \mathrm{d} t + \rho  \,    \mathrm{d} B_1 (t) \big)+     \kappa_1  \, \mathrm{d} L^{|X_1 - X_2|} (t)\,,
 $$
 
 \begin{equation}
\label{1.4.c}
 \mathrm{d} X_2 (t)= \mathbf{ 1}_{ \{ X_1 (t) >  X_2 (t) \} }   \big(\, g \,  \mathrm{d} t + \sigma\,        \mathrm{d} B_2 (t) \big)~~~~~~~~~~~~~~~~~~~~~~~~~~~~~~~~~~~~~~~~
 \end{equation}
 $$
 ~~~~~~~~~~~~~~~~~~~~~~~~~~~~~ \,+ \,\mathbf{ 1}_{ \{ X_1 (t) \le  X_2 (t) \} }  \big( - h \,    \mathrm{d} t + \rho  \,    \mathrm{d} B_2 (t) \big)+     \kappa_2  \, \mathrm{d} L^{|X_1 - X_2|} (t)\,,
$$

\medskip
\noindent
where $\, 2 \,\kappa_j :=  \alpha \, ( 1-\zeta_j) + (1-\alpha) \,(1-\eta_j)\,$, $\, \,j=1, 2\,$ or equivalently
 \begin{equation}
\label{kappa}
\kappa_1\,=\, \alpha - \big( \beta / 2 \big)\,, \qquad \kappa_2\,=\, 1-\alpha - \big( \beta / 2 \big)\,.
 \end{equation}
\end{rem}

%%%%%%%%%%%%%%%%%%%%%%%%%
\subsection{Ranks}
\label{sec3.1}
%%%%%%%%%%%%%%%%%%%%%%%%%%%%%%

\noindent
Let us   introduce   explicitly the ranked versions (leader and laggard, respectively)
\begin{equation}
\label{ranks}
R_1 (\cdot) \, := \, X_1 (\cdot) \vee X_2 (\cdot) \, , \qquad 
 R_2 (\cdot)\,:=\, X_1 (\cdot) \wedge X_2 (\cdot) 
\end{equation}
 of the components of the vector  process $\,   (X_1 (\cdot), X_2 (\cdot))\,$ constructed in (\ref{3.6}), (\ref{3.7}). From (\ref{2.12}) and (\ref{2.13}), it is rather clear that  we have  
 $$
  R_1 (t) + R_2 (t)\,=  \,X_1 (t) + X_2 (t) \,=\, r_1 + r_2 + \nu \,t +  V (t) +    \big(1-   \beta  \big) \,  L^{\,|Y|} (t )  \,, \quad 0 \le t < \infty
  $$
\begin{equation}
\label{2.15.a}  
   R_1 (t) - R_2 (t)\,=  \,\big| X_1 (t) - X_2 (t) \big|\,=  \,\big| Y (t) \big|  \,=\,
  | y | - \lambda \,t   +  V^\flat (t) +    L^{\,|Y|} (t)\,, 
   \end{equation} 
   
   \medskip
   \noindent
 and these representations lead to the  expressions 
   \begin{align}
   \label{2.16}
   R_1 (t)\,&=\, r_1 - h\, t + \rho \, V_1(t) + \big(1-  (\beta / 2) \big)\, L^{\,R_1 - R_2} (t)\,,   
   &&0 \le t < \infty\\
      \label{2.17}
   \,
   R_2 (t)\,&=\, r_2 + g\, t + \sigma \, V_2(t)- ( \beta / 2)\, L^{\,R_1 - R_2} (t)  \,,  
   &&0 \le t < \infty \,.
 \end{align}

 \medskip
 A few remarks are in order. The  equations (\ref{2.16}), (\ref{2.17})  identify the processes $\, V_1(\cdot)\,$ and $\, V_2(\cdot)\,$ of   (\ref{2.5}), (\ref{2.6})  as  the independent Brownian motions  associated with  the diffusive motion of the {\it ranked particles}, the ``leader''   $\, R_1 (\cdot)\,$ and the ``laggard''  $\, R_2 (\cdot)\,$, respectively; whereas the independent Brownian motions  $\, B_1(\cdot)\,$ in (\ref{1.3}) and $\, B_2(\cdot)\,$ in (\ref{1.4})  are associated with the  specific    ``names'' (indices, or identities) of the individual particles. On the other hand, with the help of the theory of the \textsc{Skorokhod} reflection problem (e.g., \textsc{Karatzas \& Shreve} \cite{MR1121940} %(1991)
 , page 210), we obtain from (\ref{2.15.a}), (\ref{flat})  the identification of the ``collision local time''
\begin{equation}
\label{2.14}
  L^{\,R_1 - R_2} (t)\, =\, L^{\,|Y|} (t)\, =\, \max_{0 \le s \le t} \Big( -  |y|  +  \lambda \, s     -  V^{\,\flat} (s)  \Big)^+\,, \qquad 0 \le t < \infty\,.
\end{equation} 
Let us also observe that, in the non-degenerate case  $\, \rho\,   \sigma >0 \,$, the equations (\ref{2.15.a})-(\ref{2.14}) and the second equation in (\ref{3.5.bb}) give the filtration comparisons
\begin{equation}
\label{flit}
\mathfrak{F}^{\,(V_1, V_2)} (t) \,  = \, 
\mathfrak{F}^{\,(R_1, R_2)} (t)  \,, \qquad 0 \le t < \infty
\,, 
\end{equation}
\begin{equation}
\label{flit1}
\mathfrak{F}^{\, V^\flat} (t) \,=\, \mathfrak{F}^{\, |Y|} (t)
\,  \subsetneqq  \, 
\mathfrak{F}^{\,Y} (t)  \,, \qquad 0 < t < \infty
\,,
\end{equation}
where the   inclusion is strict, due to the fact that the process $\, Y(\cdot)\,$ changes its sign with positive probability during any time-interval $\, [0,t]\,$ with $\, t >0\,$.

%%%%%%%%%%%%%%%%%%%%%%%%%
\subsection{Filtration Comparisons, Weak and Strong Solutions}
\label{sec4.2}
%%%%%%%%%%%%%%%%%%%%%%%%%%%%%%

\noindent
We have the following straightforward analogues of Propositions 4.1, 4.2 and of Theorems 4.1, 4.2  in \textsc{Fernholz et al.} \cite{FIKP2012}.  

 \begin{prop}
  \label{Proposition 1}
  In the degenerate case   $\,   \sigma =0\,$, thus $\, \rho   =1\,$ in  light of (\ref{1.1}), we have   the relations 
  \begin{equation}
    \label{4.6}
    \mathfrak{F}^{\,(R_1, R_2)} (t)=\mathfrak{F}^{\,V} (t)  = \mathfrak{F}^{\,|X_1 - X_2|} (t)  \,   \subsetneqq  \, \mathfrak{F}^{\,X_1 - X_2} (t)=\mathfrak{F}^{\,W} (t)  =\mathfrak{F}^{\,(X_1, X_2)} (t)   
  \end{equation}
  for every $\, 0 < t < \infty\,$, where the inclusion is strict. 
  
  \smallskip
  In the special case $\, \beta =1\,$ of (\ref{B1}) we have in addition $\, \sigmab( V (t)) =\sigmab( X_1 (t)+ X_2 (t))\,$, thus   also  
   $\, \mathfrak{F}^{\,V} (t)  = \mathfrak{F}^{\,X_1 + X_2} (t)\,,\,$ for every $\, 0 \le t < \infty\,$. 
\end{prop}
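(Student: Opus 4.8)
The plan is to exploit the degeneracy $\sigma=0$, $\rho=1$ throughout the skew representations and the rank equations established in Section~\ref{sec3}. With $\sigma=0$, the formula $\delta=2\rho\sigma=0$ forces $Q(\cdot)\equiv 0$, so the independent Brownian motion $Q$ drops out of the skew representations (\ref{2.23b}), (\ref{2.24}); equivalently, the construction (\ref{3.5}), (\ref{3.5.b}) yields $V(\cdot)=V_1(\cdot)$. The core of the argument is then to read off the four filtrations $\mathfrak{F}^{(R_1,R_2)}$, $\mathfrak{F}^{V}$, $\mathfrak{F}^{|X_1-X_2|}$, $\mathfrak{F}^{X_1-X_2}$, $\mathfrak{F}^{W}$, $\mathfrak{F}^{(X_1,X_2)}$ from the explicit pathwise relations already on the table, and to establish the two equalities and the single strict inclusion in (\ref{4.6}).

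\textbf{Key steps.} First I would handle the right-hand block of equalities. The identity $\mathfrak{F}^{X_1-X_2}(t)=\mathfrak{F}^{W}(t)$ is immediate from the filtration identity (\ref{YW}), since $Y(\cdot)=X_1(\cdot)-X_2(\cdot)$. For $\mathfrak{F}^{W}(t)=\mathfrak{F}^{(X_1,X_2)}(t)$, the inclusion $\mathfrak{F}^{W}\subseteq\mathfrak{F}^{(X_1,X_2)}$ follows because $W$ is built from $B_1,B_2$ which, with $\sigma=0$, are recoverable from the dynamics (\ref{1.3})--(\ref{1.4}); conversely, with $Q\equiv 0$ the skew representations (\ref{2.23b}), (\ref{2.24}) express both $X_1(\cdot)$ and $X_2(\cdot)$ as measurable functionals of the path of $Y(\cdot)$ alone, hence of $W(\cdot)$ by (\ref{YW}), giving the reverse inclusion. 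Next I would treat the left-hand block. From (\ref{flit}) in the non-degenerate regime the equality $\mathfrak{F}^{(V_1,V_2)}=\mathfrak{F}^{(R_1,R_2)}$ holds, but in the degenerate case one works directly from the rank equations (\ref{2.16}), (\ref{2.17}): with $\sigma=0$ these read $R_1(t)=r_1-ht+V_1(t)+(1-\beta/2)L^{R_1-R_2}(t)$ and $R_2(t)=r_2+gt-(\beta/2)L^{R_1-R_2}(t)$, so $R_2(\cdot)$ is a deterministic drift plus a multiple of the collision local time. Using the Skorokhod identification (\ref{2.14}), which expresses $L^{|Y|}(\cdot)$ as a pathwise functional of $V^\flat(\cdot)=V_1(\cdot)$ (since $V^\flat=\rho V_1-\sigma V_2=V_1$ when $\sigma=0$), one recovers $R_1,R_2$ from $V=V_1$, and conversely $V_1$ from $(R_1,R_2)$; this yields $\mathfrak{F}^{(R_1,R_2)}(t)=\mathfrak{F}^{V}(t)$. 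The further equality with $\mathfrak{F}^{|X_1-X_2|}(t)$ follows because $|Y(\cdot)|=R_1(\cdot)-R_2(\cdot)$ together with (\ref{2.15.a}) makes $|Y|$ and $V^\flat=V_1=V$ mutually measurable through (\ref{2.14}). Finally, the strictness of the central inclusion is exactly the content of (\ref{flit1}): the sign of $Y(\cdot)$ is genuine additional information not contained in $|Y(\cdot)|$, because $Y$ changes sign on every interval $[0,t]$, $t>0$, with positive probability.

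\textbf{Main obstacle.} The subtle point is the strictness of the inclusion rather than the chain of equalities, which is largely bookkeeping with the explicit representations. Establishing $\mathfrak{F}^{|Y|}(t)\subsetneqq\mathfrak{F}^{Y}(t)$ rigorously requires the excursion-theoretic fact that the excursion signs of the reflecting diffusion $|Y|$ are not $\mathfrak{F}^{|Y|}$-measurable, so that knowledge of $|Y(\cdot)|$ on $[0,t]$ does not determine $Y(\cdot)$; this is precisely the mechanism cited after (\ref{flit1}). For the supplementary claim under $\beta=1$, I would simply substitute $\beta=1$ into (\ref{2.12.a}) (with $\sigma=0$), observe that the local-time coefficient $2(1-\beta)$ vanishes, so that $X_1(t)+X_2(t)=z+\nu t+V(t)$ is an invertible affine functional of $V(t)$ at each fixed $t$, whence $\sigmab(V(t))=\sigmab(X_1(t)+X_2(t))$ and the corresponding filtration equality.
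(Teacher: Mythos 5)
Your argument is correct and follows essentially the route the paper intends (it states Proposition \ref{Proposition 1} as a ``straightforward analogue'' and relies on exactly the ingredients you use: the skew representations (\ref{2.23b})--(\ref{2.24}), the rank equations (\ref{2.16})--(\ref{2.17}), the Skorokhod identification (\ref{2.14}), the identity (\ref{YW}), and the strictness mechanism of (\ref{flit1})). One misstatement to correct: when $\sigma=0$, $\rho=1$ the process $Q(\cdot)=\sigma V_1(\cdot)+\rho V_2(\cdot)=V_2(\cdot)$ is a standard Brownian motion, not identically zero; what vanishes is its coefficient $\rho\,\sigma$ in (\ref{2.23b})--(\ref{2.24}), which is the only fact your proof actually uses, so the conclusion stands.
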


\begin{prop}
  \label{Proposition 2}
  In the non-degenerate case $\, \rho \, \sigma >0\,$, we have for every $\,  0 < t < \infty\,$   the filtration relations 
  \begin{equation}
    \label{4.11}
    \mathfrak{F}^{\,(V_1, V_2)} (t) \,=\,\mathfrak{F}^{\,(R_1, R_2)} (t)\,=\, \mathfrak{F}^{\,(|Y|, V)} (t) \,=\, \mathfrak{F}^{\,(|Y|, Q)} (t)    ~~~~~~~~~~~~~~~~~~~~~~~~~~~~~~~~~~~~~~~~~~~~~~~~~~~~~ 
  \end{equation}
  $$  ~~~~~~~~~~~~~~~~~~~~~~~~~~~~~~~ ~~~~~~~~~~~~~~~~~~~~~~~~~~~~~~~ ~~~\subsetneqq  \mathfrak{F}^{\,(Y, Q)} (t)\,=\, \mathfrak{F}^{\,(Y, V)} (t)\,=\,\mathfrak{F}^{\,(W_1, W_2)} (t)\,=\,\mathfrak{F}^{\,(X_1, X_2)}(t)  \, ,
  $$
  where the inclusion is strict.
\end{prop}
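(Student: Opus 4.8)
The plan is to prove the two chains of equalities separately and then the single strict inclusion, using as workhorses the filtration identity $\mathfrak{F}^{Y}(t)=\mathfrak{F}^{W}(t)$ from \eqref{YW}, the rank identities \eqref{flit}, \eqref{flit1}, the intertwinements \eqref{2.10.a}, the representation \eqref{2.14} of the collision local time, and the skew representations \eqref{2.23b}, \eqref{2.24}; the non-degeneracy $\rho\,\sigma>0$ will enter precisely through the invertibility of the linear maps whose determinant is $\delta=2\,\rho\,\sigma$, and through division by $\rho\,\sigma$ in the skew representations. Throughout, ``equality of filtrations'' is read in the augmented, right-continuous sense, and I would verify mutual adaptedness by exhibiting each process as a measurable path-functional of the generators of the other filtration; the local time $L^{|Y|}$ is in particular $\mathfrak{F}^{|Y|}$-adapted, because $\langle|Y|\rangle(t)=t$ by Remark \ref{RFLSBBBM}, so that $L^{|Y|}(\cdot)$ is an occupation-density limit of functionals of $|Y|$ alone.

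For the left chain I would start from \eqref{flit}, which already gives $\mathfrak{F}^{(V_1,V_2)}(t)=\mathfrak{F}^{(R_1,R_2)}(t)$. Next I would establish $\mathfrak{F}^{(R_1,R_2)}(t)=\mathfrak{F}^{(|Y|,V)}(t)$ from the two identities $R_1-R_2=|Y|$ and $R_1+R_2=z+\nu\,t+V+(1-\beta)\,L^{|Y|}$ contained in \eqref{2.15.a}: since $L^{|Y|}$ is a functional of $|Y|$, the pair $(R_1,R_2)$ and the pair $(|Y|,V)$ determine one another. Finally, writing $V=\gamma\,V^\flat+\delta\,Q$ as in subsection \ref{sec3.2} and using $\mathfrak{F}^{V^\flat}(t)=\mathfrak{F}^{|Y|}(t)$ from \eqref{flit1}, the map $(|Y|,V)\leftrightarrow(|Y|,Q)$ is a bijection because $\delta=2\,\rho\,\sigma>0$; this yields $\mathfrak{F}^{(|Y|,V)}(t)=\mathfrak{F}^{(|Y|,Q)}(t)$ and closes the left chain.

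For the right chain, the inversion $(Y,V)\leftrightarrow(Y,Q)$ is identical to the one just used, now with $V^\flat$ recovered from $Y$ rather than from $|Y|$, giving $\mathfrak{F}^{(Y,V)}(t)=\mathfrak{F}^{(Y,Q)}(t)$. To identify these with $\mathfrak{F}^{(W_1,W_2)}(t)$ I would combine $\mathfrak{F}^{Y}(t)=\mathfrak{F}^{W}(t)$ with the intertwinement $Q(\cdot)=\int_0^\cdot\mathrm{sgn}(Y(t))\,\mathrm{d}U^\flat(t)$ of \eqref{2.10.a}, which inverts to $U^\flat(\cdot)=\int_0^\cdot\mathrm{sgn}(Y(t))\,\mathrm{d}Q(t)$ since $\mathrm{sgn}^2=1$ off the zero set and $Y$ spends no time at $0$ by \eqref{ZS}; together with $\mathbf{F}^{(W_1,W_2)}\equiv\mathbf{F}^{(W,U^\flat)}$ this gives $\mathfrak{F}^{(Y,Q)}(t)=\mathfrak{F}^{(W_1,W_2)}(t)$. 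The equality $\mathfrak{F}^{(W_1,W_2)}(t)=\mathfrak{F}^{(X_1,X_2)}(t)$ then follows in two directions: the construction \eqref{3.6}, \eqref{3.7} exhibits $X_1,X_2$ as functionals of $(W_1,W_2)$, while conversely $Y=X_1-X_2$ recovers $W$ through \eqref{YW}, and the skew representation \eqref{2.23b} recovers $Q=(\rho\,\sigma)^{-1}\big[X_1-x_1-\mu\,t-\rho^2(Y^+-y^+)+\sigma^2(Y^--y^-)-(1-\beta-\gamma)\,\widehat{L}^Y\big]$, again using $\rho\,\sigma>0$.

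The remaining, and main, task is the strict inclusion $\mathfrak{F}^{(|Y|,Q)}(t)\subsetneqq\mathfrak{F}^{(Y,Q)}(t)$. The inclusion is immediate since $|Y|$ is a functional of $Y$. For strictness I would show that $\mathbf{1}_{\{Y(t)>0\}}$, which is $\mathfrak{F}^{(Y,Q)}(t)$-measurable, fails to be $\mathfrak{F}^{(|Y|,Q)}(t)$-measurable. The key input is that $Q(\cdot)$ is independent of $Y(\cdot)$ (subsection \ref{sec3.2}); since $\sigma(Q)$ is then independent of $\sigma(\mathbf{1}_{\{Y(t)>0\}})\vee\mathfrak{F}^{|Y|}(t)\subseteq\sigma(Y)$, the standard conditioning lemma gives $\mathbb{E}[\mathbf{1}_{\{Y(t)>0\}}\mid\mathfrak{F}^{(|Y|,Q)}(t)]=\mathbb{E}[\mathbf{1}_{\{Y(t)>0\}}\mid\mathfrak{F}^{|Y|}(t)]$, so adjoining $Q$ contributes nothing about the sign of $Y$ beyond what $|Y|$ already carries. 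By the strictness already recorded in \eqref{flit1}, namely that the sign of $Y$ is not $\mathfrak{F}^{|Y|}$-measurable because $Y$ changes sign with positive probability on every $[0,t]$, the right-hand side lies strictly between $0$ and $1$ on a set of positive measure, whence $\mathbf{1}_{\{Y(t)>0\}}$ cannot be $\mathfrak{F}^{(|Y|,Q)}(t)$-measurable. The delicate point, which I would treat most carefully, is exactly this factorization of the conditional expectation: it amounts to verifying the independence of $\sigma(Q)$ from $\sigma(Y)$ and then pushing it through the usual-conditions augmentation, so that enlarging $\mathfrak{F}^{|Y|}$ by the independent noise $Q$ does not recover the excursion-signs of $Y$.
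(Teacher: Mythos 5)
Your proposal is correct and follows essentially the route the paper intends: the paper states Proposition \ref{Proposition 2} as a ``straightforward analogue'' of the corresponding results in \textsc{Fernholz et al.} \cite{FIKP2012} without writing out the details, and your argument assembles exactly the ingredients the paper prepares for this purpose --- the rank identities (\ref{flit}), (\ref{flit1}), (\ref{2.15.a}), the intertwinements (\ref{2.10.a}), the decomposition $V = \gamma\, V^\flat + \delta\, Q$ with $\delta = 2\rho\sigma > 0$, the skew representations (\ref{2.23b})--(\ref{2.24}), and the independence of $Q(\cdot)$ from $Y(\cdot)$ for the strictness via conditional expectation. No gaps.
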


\begin{thm}
  \label{Theorem 1} 
  The system of stochastic differential equations (\ref{1.3}), (\ref{1.4}) is well-posed, that is, has a weak solution which is unique in the  sense of the probability distribution. The same is true for each of the  systems of equations (\ref{1.3.a}), (\ref{1.4.a}) and (\ref{1.3.b}), (\ref{1.4.b}). 
  
  On the other hand,   the system of stochastic differential equations
  (\ref{1.3.a}), (\ref{1.4.a})  admits a     strong solution,  which is therefore pathwise unique;     whereas the system   (\ref{1.3.b}), (\ref{1.4.b})  admits no strong    solution. 
\end{thm}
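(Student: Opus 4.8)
The plan is to establish Theorem \ref{Theorem 1} in three parts, leaning heavily on the analysis already carried out in Section \ref{sec3}: the weak existence and uniqueness in distribution, and then the separate questions of strong solvability for the two driven systems. The conceptual core is that all three systems \emph{share the same difference process} $\,Y(\cdot)\,$, which solves the SBBBM equation (\ref{SBBBM}), and this process — together with the independent Brownian motion $\,Q(\cdot)\,$ — fully determines the law of $\,(X_1(\cdot),X_2(\cdot))\,$ via the skew representations (\ref{2.23b}), (\ref{2.24}).

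First I would dispose of weak existence and uniqueness in distribution. For existence, the synthesis of Section \ref{sec4} is exactly the required construction: starting from a planar Brownian motion $\,(W_1,W_2)\,$, one builds the pathwise unique strong solution $\,Y(\cdot)\,$ of (\ref{SBBBM}), then the Brownian motions $\,V_1,V_2\,$ via (\ref{3.5}), the martingales $\,M_1,M_2\,$ via (\ref{3.8}), (\ref{3.9}), the driving Brownian motions $\,B_1,B_2\,$ via (\ref{3.12}), (\ref{3.13}), and finally the processes $\,X_1,X_2\,$ via (\ref{3.6}), (\ref{3.7}); one checks directly that $\,X_1-X_2=Y\,$ and that (\ref{1.3})-(\ref{1.4}) hold, as already asserted there. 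This single construction simultaneously solves (\ref{1.3.a})-(\ref{1.4.a}) and (\ref{1.3.b})-(\ref{1.4.b}), since those differ only in whether $\,(W_1,W_2)\,$ or $\,(V_1,V_2)\,$ is named as driver, and both pairs are available. For uniqueness in distribution, I invoke the skew representations derived in Section \ref{sec3.2}: the law of $\,Y(\cdot)\,$ is uniquely pinned down (the SBBBM is a Feller process whose transition probabilities are explicit, per Section \ref{sec5}), $\,Q(\cdot)\,$ is an independent Brownian motion, and (\ref{2.23b}), (\ref{2.24}) express $\,(X_1,X_2)\,$ as a fixed measurable functional of $\,(Y,Q)\,$; hence the joint law is determined, for all three systems.

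Next comes the strong solvability of (\ref{1.3.a})-(\ref{1.4.a}). The point here is the filtration bookkeeping: because this system is driven by $\,(W_1,W_2)\,$ and the SBBBM equation (\ref{SBBBM}) has a \emph{pathwise unique, strong} solution with $\,\mathfrak{F}^Y(t)=\mathfrak{F}^W(t)\,$ by (\ref{YW}), the entire synthesis of Section \ref{sec4} runs inside the filtration $\,\mathbf{F}^{(W_1,W_2)}\,$. Indeed $\,\mathbf{F}^{(W_1,W_2)}\equiv\mathbf{F}^{(W,U^\flat)}\,$, and $\,Y\,$ is adapted to $\,\mathbf{F}^W\subseteq\mathbf{F}^{(W_1,W_2)}\,$; the quantities $\,V_1,V_2,M_1,M_2,L^Y,L^Y_-\,$ and thence $\,X_1,X_2\,$ are all built by adapted operations from $\,Y\,$ and $\,(W_1,W_2)\,$. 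Thus $\,(X_1,X_2)\,$ is $\,\mathbf{F}^{(W_1,W_2)}$-adapted, i.e. a strong solution relative to its own driver; and strong existence plus uniqueness in distribution yields pathwise uniqueness by the Yamada--Watanabe principle.

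Finally, the \emph{non-existence} of a strong solution to (\ref{1.3.b})-(\ref{1.4.b}) is where I expect the real obstruction, and it is a filtration impossibility argument rather than a construction. A strong solution of (\ref{1.3.b})-(\ref{1.4.b}) would, by definition, be adapted to $\,\mathbf{F}^{(V_1,V_2)}\,$. But (\ref{flit}) identifies $\,\mathfrak{F}^{(V_1,V_2)}(t)=\mathfrak{F}^{(R_1,R_2)}(t)\,$, and Proposition \ref{Proposition 2} shows that in the non-degenerate case this filtration is \emph{strictly smaller} than $\,\mathfrak{F}^{(X_1,X_2)}(t)\,$ — the strictness coming from the fact, noted around (\ref{flit1}), that the sign of $\,Y\,$ (hence the identity of which named particle leads) cannot be recovered from $\,|Y|\,$ and the symmetric data. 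Therefore $\,(X_1,X_2)\,$ genuinely carries information beyond $\,(V_1,V_2)\,$, so no $\,\mathbf{F}^{(V_1,V_2)}$-adapted version can reproduce it; strong solvability fails. The delicate step is justifying the strict inclusion in Proposition \ref{Proposition 2} rigorously — that the driving pair $\,(V_1,V_2)\,$ associated with the \emph{ranked} particles genuinely loses the naming information — and I would handle the degenerate boundary case $\,\rho\sigma=0\,$ separately, since there the relevant filtration identities take the different form recorded in Proposition \ref{Proposition 1}.
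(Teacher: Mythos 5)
Your proposal is correct and follows essentially the same route as the paper, which states Theorem \ref{Theorem 1} as a ``straightforward analogue'' of the corresponding results in Fernholz et al.\ and leaves the assembly of the ingredients from Sections \ref{sec3} and \ref{sec4} implicit: the synthesis for weak existence, the skew representations (\ref{2.23b})--(\ref{2.24}) for uniqueness in law, adaptedness of the entire construction to $\,\mathbf{F}^{(W_1,W_2)}\,$ for strong solvability of (\ref{1.3.a})--(\ref{1.4.a}), and the strict filtration inclusion of Proposition \ref{Proposition 2} (resp.\ Proposition \ref{Proposition 1} in the degenerate case) for the non-existence of a strong solution to (\ref{1.3.b})--(\ref{1.4.b}). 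The only cosmetic point is that the inference ``strong existence plus uniqueness in law implies pathwise uniqueness'' is the converse (Cherny--Engelbert) direction of the Yamada--Watanabe circle of results rather than the classical one, but this is precisely the inference the theorem statement itself invokes.
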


\begin{thm}
  \label{Theorem 2} 
  The system of stochastic differential equations (\ref{1.3}), (\ref{1.4})
  admits a pathwise unique, strong solution; in particular, the filtration identity $\, \mathfrak{F}^{\,(B_1, B_2)} (t)\,=\,\mathfrak{F}^{\,(X_1, X_2)}(t) \,$ 
  holds for all $ \, 0 \le t < \infty\,$.    
  
  Likewise, the system of equations (\ref{1.3.c}), (\ref{1.4.c})  has a pathwise unique, strong solution. %\footnote{~Dear Bob, Dear Tomoyuki: This sentence is a new claim; looks obvious to me, but please check.}
\end{thm}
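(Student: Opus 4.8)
The plan is to push the entire difficulty onto the one-dimensional difference $\, Y(\cdot) = X_1(\cdot) - X_2(\cdot)\,$. For any solution of (\ref{1.3}), (\ref{1.4}) driven by $\, (B_1, B_2)\,$, the computation behind (\ref{2.11})--(\ref{SBBBM}) shows that $\, Y\,$ satisfies the skew Bang-Bang equation; and, combining (\ref{2.3}), (\ref{2.4}) with (\ref{2.7}), its driving motion $\, W\,$ is the stochastic integral of $\,\text{sgn}(Y)-$dependent indicators against $\, (B_1, B_2)\,$. Thus (\ref{SBBBM}) is, from the point of view of the \emph{given} noise $\, (B_1, B_2)\,$, a genuine --- albeit self-referential --- stochastic equation for $\, Y\,$, with two-valued dispersion and a local-time drag. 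Once $\, Y\,$ is shown to be pathwise unique and strong relative to $\, (B_1, B_2)\,$, the skew representations (\ref{2.23b}), (\ref{2.24}) recover $\, X_1, X_2\,$ as explicit functionals of $\, Y\,$, of $\, \widehat L^{\,Y}\,$, and of the auxiliary Brownian motion $\, Q\,$ of (\ref{2.7}); and $\, Q\,$ is itself a measurable functional of $\, Y\,$ and $\, (B_1, B_2)\,$ through (\ref{2.5}), (\ref{2.6}), (\ref{2.10}). Everything therefore reduces to the difference process.

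The heart of the matter, and the main obstacle, is pathwise uniqueness for $\, Y\,$ as a solution of this $\, (B_1, B_2)-$driven equation. It cannot be quoted directly from the well-posedness of (\ref{SBBBM}) in Section \ref{sec5}, since two solutions $\, Y, \widetilde Y\,$ built on the \emph{same} $\, (B_1, B_2)\,$ produce, through (\ref{2.3}), (\ref{2.4}), (\ref{2.7}), two \emph{different} driving motions $\, W, \widetilde W\,$ as soon as $\, \text{sgn}(Y) \neq \text{sgn}(\widetilde Y)\,$. The plan is to run the refined \textsc{Le Gall} comparison directly on this equation, as in \textsc{Fernholz et al.} \cite{FIKP2012}: apply the \textsc{Tanaka} formulae (\ref{Tanaka2}), (\ref{Tanaka1}) to $\, Y - \widetilde Y\,$, observe that the quadratic variation of $\, W - \widetilde W\,$ is carried by the set $\, \{\text{sgn}(Y) \neq \text{sgn}(\widetilde Y)\}\,$ on which the two solutions straddle the origin, and use a \textsc{Le Gall}-type test function built from the two-valued dispersion to show that the local time at the origin of $\, Y - \widetilde Y\,$ vanishes; a \textsc{Gronwall} argument then forces $\, Y \equiv \widetilde Y\,$. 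The delicate point is precisely the classical \textsc{Tanaka}-type obstruction to uniqueness for the signum coefficient; as in \textsc{Prokaj} \cite{tanaka2009} and \cite{FIKP2012}, it is defeated because the bang-bang drift and the local-time drag --- the reflection structure of Remark \ref{RFLSBBBM} --- act as a genuine perturbation breaking the symmetry that is responsible for non-uniqueness.

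Granting this, strong existence follows from the weak existence of Theorem \ref{Theorem 1} by the \textsc{Yamada}--\textsc{Watanabe} principle (in the form valid for equations with local-time terms), so the pathwise-unique $\, Y\,$ is $\, \mathbf{F}^{(B_1,B_2)}-$adapted. Then $\,\text{sgn}(Y)\,$ is $\,\mathbf{F}^{(B_1,B_2)}-$adapted, (\ref{2.3})--(\ref{2.7}) make $\, W_1, W_2, V_1, V_2, Q\,$ and $\, \widehat L^{\,Y}\,$ adapted, and (\ref{2.23b}), (\ref{2.24}) exhibit $\, X_1, X_2\,$ as $\,\mathbf{F}^{(B_1,B_2)}-$adapted: the solution is strong and, by the uniqueness of $\, Y\,$, pathwise unique. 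Recovering $\, (B_1, B_2)\,$ from $\, (X_1, X_2)\,$ through the decomposition of the martingale parts and the inversion (\ref{3.12}), (\ref{3.13}) gives the reverse inclusion, and the filtration identity $\, \mathfrak{F}^{(B_1,B_2)}(t) = \mathfrak{F}^{(X_1,X_2)}(t)\,$ then follows from the chains of Propositions \ref{Proposition 1} and \ref{Proposition 2}. Finally, by the relations (\ref{alpha}) and the definitions (\ref{kappa}) of $\, \kappa_1, \kappa_2\,$, the system (\ref{1.3.c}), (\ref{1.4.c}) is merely the rewriting of (\ref{1.3}), (\ref{1.4}) with the symmetric local time $\, L^{|Y|} = 2\,\widehat L^{\,Y}\,$ noted in Remark \ref{aux}; its pathwise uniqueness and strength are thus equivalent to, and follow from, those established above.
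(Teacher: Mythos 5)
Your overall architecture matches the paper's: reduce everything to the difference $\,Y(\cdot)\,$, recognize that the strong solvability of (\ref{SBBBM}) from Section \ref{sec5} cannot be quoted because two solutions on the same $\,(B_1,B_2)\,$ generate different driving motions $\,W\,$, run a \textsc{Le Gall}--type comparison to show that the local time of $\,Y-\widetilde Y\,$ at the origin vanishes, and then invoke \textsc{Yamada--Watanabe} for strength and the skew representations (\ref{2.23b}), (\ref{2.24}) to recover $\,X_1, X_2\,$. The treatment of (\ref{1.3.c}), (\ref{1.4.c}) via Remark \ref{aux} and (\ref{alpha}) is also fine.

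The genuine gap is your final step: ``a \textsc{Gronwall} argument then forces $\,Y\equiv\widetilde Y\,$.'' Vanishing of $\,L^{Y-\widetilde Y}(\cdot)\,$ does \emph{not} by itself give indistinguishability, and no Gronwall estimate is available here --- the drift $\,-\lambda\,\mathrm{sgn}(\cdot)\,$, the two-valued dispersion seen from $\,(B_1,B_2)\,$, and the local-time drag are all non-Lipschitz, which is the entire reason the Tanaka-type obstruction arises in the first place. Concretely, if you apply the \textsc{Tanaka} formula to $\,D=Y_1-Y_2\,$ after killing its local time and take expectations, you are left with the term $\,2(2\alpha-1)\,\E{\int_0^t \overline{\mathrm{sgn}}(D(s))\,\de(\widehat L^{Y_1}-\widehat L^{Y_2})(s)}\,$, which has no sign and admits no Gronwall-type bound when $\,\alpha\neq 1/2\,$. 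The paper closes this gap with a different device: using Corollary 2.6 of \textsc{Ouknine \& Rutkowski} \cite{MR1324099} it computes the symmetric local time of the maximum $\,M=Y_1\vee Y_2\,$, shows that $\,M\,$ satisfies the \emph{same} extended skew Tanaka equation (\ref{ESTE}), invokes uniqueness in distribution to conclude $\,M\,$ has the law of $\,Y_1\,$, and then uses $\,M\ge Y_1\,$ pointwise to force $\,M\equiv Y_1\,$, hence $\,Y_1\equiv Y_2\,$. You need this (or an equivalent) argument; without it the proof does not close. Two secondary remarks: the paper first performs a \textsc{Girsanov} change of measure to remove the bang-bang drift and splits into the isotropic and non-isotropic cases, and the symmetry-breaking perturbation that defeats the Tanaka obstruction in the non-isotropic case is the additive independent Brownian component $\,\frac{\rho+\sigma}{\sqrt 2}\,\thetab(\cdot)\,$ in (\ref{ESTE}) --- the mechanism of \textsc{Prokaj} \cite{tanaka2009} --- rather than the drift and local-time terms to which you attribute it.
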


\begin{proof}  Repeating almost verbatim the arguments in the proof of Theorem 5.1 in \textsc{Fernholz et al.} \cite{FIKP2012},  
the question boils down to whether the filtration comparison 
\begin{equation}
\label{FiltComp}
\mathfrak{F}^{\,Y} (t)\,\subseteq\,\mathfrak{F}^{\,(B_1, B_2)}(t) \,, \qquad \forall ~~~\, 0 \le t < \infty 
\end{equation}
holds. To decide this issue, we write the equation (\ref{SBBBM}) as driven by the pair $\,(B_1(\cdot), B_2 (\cdot))\,$; in other words, we use (\ref{2.7}) and (\ref{2.3}), (\ref{2.4}) to  express the skew Brownian motion $\, Y(\cdot)\,$  with bang-bang drift    as solution of a stochastic differential equation driven by the planar Brownian motion 
$\,(B_1(\cdot), B_2 (\cdot))\,$. Since this equation does admit a weak solution which is unique in distribution, the issue   is whether this solution is also strong, that is, whether (\ref{FiltComp}) holds.

This question is easy to settle in the isotropic case $\, \rho = \sigma = 1 / \sqrt{2\,}\,$; then $\, W (\cdot) = (B_1(\cdot)- B_2 (\cdot)) / \sqrt{2\,}\,$, and the comparison (\ref{FiltComp}) follows from the strong solvability of the equation (\ref{SBBBM}) proved in section \ref{sec5}: $\, \mathfrak{F}^{\,Y} (t) = \mathfrak{F}^{\,W} (t) \subseteq \mathfrak{F}^{\,(B_1, B_2)}(t) \,$ holds for all  $\, 0 \le t < \infty\,$, by virtue of (\ref{YW}). 

\smallskip
In the non-isotropic case $\, \rho \neq \sigma  \,$, we write (\ref{SBBBM}) as   the {\it extended skew Tanaka equation}
\begin{equation}
\label{ESTE}
Y ( t)\,=\, y \,+ \,{ \, \rho - \sigma \, \over \sqrt{2\,} } \int_0^{\,  t} \overline{\text{sgn}} \big( Y(s) \big)\, \mathrm{d} \betab (s) - { \, \rho + \sigma \, \over \sqrt{2\,} }\, \thetab ( t)\,+\, 2  \big(   2\, \alpha -1  \big)\, \widehat{L}^{\,Y} ( t)\,,\quad 0 \le t \le T
\end{equation}
with $\, T \in (0, \infty)\,$ arbitrary but fixed. Here $$\, \betab (\cdot)\,:=\, {\,\betab_1 (\cdot) + \betab_2 (\cdot)\, \over \sqrt{2\,\,}}\,, \qquad  \thetab (\cdot) \,:=\, {\,\betab_1 (\cdot) - \betab_2 (\cdot)\, \over \sqrt{2\,\,}}\,$$ are  given standard, independent  Brownian motions after an equivalent change of probability measure, and 
 $$
\betab_i ( t) \,:=\, B_i (t) - { \lambda \, t \over \, \rho - \sigma\,}\,, \qquad 0 \le t \le T ~~~~~~~~~~~(i=1, 2)\,. 
 $$
 
\medskip
\noindent
 $\bullet~$ Let us suppose that  $\,Y_{1}(\cdot)\,$ and $\,Y_{2}(\cdot)\,$ are   two solutions of  the equation (\ref{ESTE}), defined on the same probability space and with respect to the same, independent standard Brownian motions $\, B_1(\cdot), \, B_2 (\cdot) \,$. Following \textsc{Le Gall} (1983), we shall show $\, {L}^{Y_{1} - Y_{2}} (\cdot) \equiv 0\,$; we shall then argue  that this implies also  $\,Y_{1}(\cdot) \equiv Y_{2}(\cdot)\,$, a.s. 

\smallskip
To this end, we consider   the difference  $\,D(\cdot) \, :=\, Y_{1}(\cdot) - Y_{2}(\cdot)\,$, as well as the linear combinations 
$$\, 
Z^{(u)}(\cdot) \, :=\, (1-u) \, Y_{1}(\cdot) + u\, Y_{2}(\cdot) \,~ ~~~~ \hbox{for} ~~~\,0 \le u \le 1\,;
$$ 
we introduce also a sequence  $\, \{ f_k \}_{k \in \N} \subset
  \mathcal{C}^1(\R)\,$  of continuous and continuously differentiable
  functions that converge to $\,f_{\infty} (\cdot) \, :=\, \overline{\text{sgn}}(\cdot)\,$ pointwise,   and
  satisfy $\sup_{k\in\N}\norm{f_k}_{TV}<\infty$. Since $\limsup_{k}
  \norm{f_k}_{TV}\leq \norm{f_{\infty}}_{TV}$ obviously holds, this is only possible if  $\,f_{\infty}(\cdot)\,$ is of bounded variation, and in this case an approximating sequence is  easily  obtained, e.g., by mollifiers.  
%We approximate the signum function by a sequence $\,\{ f_{k} \}_{k \in \mathbb N} \, $ of $\, C^{1}(\mathbb R)\,$ functions which converge  to $\,f_{\infty}(\cdot) \, :=\, \overline{\text{sgn}}(\cdot)\,$ both pointwise and in total variation $\,\lim_{k\to \infty} \lVert f_{k} - f_{\infty} \rVert_{TV} < \infty \,$. 
As in the proof of Theorem 8.1 of \textsc{Fernholz et al.} \cite{FIKP2012}, for every $\,\delta > 0\,$, $\,T > 0 \,$, $\,k \ge 1\,$, we establish then 
\[
\mathbb E \Big[ \int^{T}_{0} \frac{\, \lvert f_{k}(Y_{1}(s)) - f_{k}(Y_{2}(s)) \rvert\, }{Y_{1}(s) - Y_{2}(s)} {\bf 1}_{\{ Y_{1}(s) - Y_{2}(s) > \delta\}} \, \mathrm d t \,\Big] \, \le\,   c_{1} \, \lVert f_{k} \rVert_{TV} \cdot \sup_{\xi, u} \mathbb E \big( 2 \widehat{L}^{(u)} (T, \xi) \big) \,   ;
\]
 here $\, \widehat{L}^{(u)}(T, \xi)\,$ is the symmetric local time of $\, Z^{(u)}(\cdot)\,$ accumulated at the site $\,\xi \in \R\,$ over the time interval $\, [0, T]\,$, and $\,c_{1}\,$ is a constant chosen independently of $\, k, u, \delta \,$. Letting $\,k \uparrow \infty\,$ and $\,\delta \downarrow 0\,$, we obtain 
\[
\mathbb E \Big[ \int^{T}_{0} \frac{1}{\,D(t)\,} \,{\bf 1}_{\{D(t) > 0 \}}\, \mathrm d \langle D\rangle(t)\, \Big]  \, \le \, 
2\, \mathbb E \Big[ \int^{T}_{0} \frac{\, \lvert f_{\infty}(Y_{1}(t)) - f_{\infty}(Y_{2}(t)) \rvert\, }{Y_{1}(t) - Y_{2}(t)} {\bf 1}_{\{ D(t) > 0\}} \mathrm \,d t \,\Big]~
\]
\[
\hspace{2.2cm} ~~\,~~~~~~~\le \, 2 \,c_{1}\,  \lVert f_{\infty} \rVert_{TV} \cdot  \sup_{\xi, u} \mathbb E \Big( 2 \widehat{L}^{(u)} (T, \xi) \Big) \, .  
\]
Now the \textsc{Cauchy-Schwartz} inequality, the \textsc{It\^o}  isometry, and the \textsc{Tanaka}  formula (\ref{Tanaka1}) applied to $\, Z^{(u)}(\cdot)\,$, 
allow us to  estimate 
\[
\mathbb E \big(2 \widehat{L}^{(u)}(T; \xi) \big) \le \mathbb E \lvert Z^{(u)}(T) - Z^{(u)}(0) \rvert + \big[ \mathbb E ( \langle Z^{(u)}\rangle (T) ) \big]^{1/2} 
\]
\[
\hspace{4cm}~~~~~~~~ + \,2\,  ( 2\alpha -1 ) \big( u \, \mathbb E\big(\widehat{L}^{Y_{1}}(T) \big) + (1-u) \, \mathbb E \big( \widehat{L}^{Y_{2}}(T)\big) \big) \]
\[
\hspace{0.5cm} \le \,2\,  \Big[ \big [ \mathbb E ( \langle Z^{(u)}\rangle (T) ) \big ] ^{1/2} +2\,  ( 2\alpha -1 ) 
\Big( u \, \mathbb E\big(\widehat{L}^{Y_{1}}(T) \big) + (1-u
) \, \mathbb E \big( \widehat{L}^{Y_{2}}(T)\big) \Big) \Big] \, . 
\]

\smallskip
\noindent
The last term is bounded uniformly in $\, (\xi, u)\,$, since $\, \langle Z^{(u)}\rangle (t) \le c_{2} \, t\, $  and $\,\mathbb E \big(\widehat{L}^{\,Y_{i}}(T)\big) \le c_{3}\,$, for $\,i = 1, 2\,$ and for some constants $\, c_{2}\,, c_{3}\,$ that do not depend on $\,(\xi, u)\,$. Thus, we obtain 
\begin{equation} 
\label{eq: Lemma1 LeGall}
\mathbb E \Big[ \int^{T}_{0} \frac{1}{\,D(t)\,} \,{\bf 1}_{\{D(t) > 0 \}}\, \mathrm d \langle D\rangle(t)\, \Big] < \infty \, , \quad 0 < T < \infty \, . 
\end{equation}

\medskip
Using Lemma 1.0 of \textsc{Le Gall} \cite{MR0770393} %(1983) 
(see also Exercise 3.7.12, pages 225-226 in \textsc{Karatzas \& Shreve} \cite{MR1121940}%(1991)
), we verify that (\ref{eq: Lemma1 LeGall}) gives $\,L^{D}(\cdot) \equiv 0\,$. By exchanging the r\^oles of $\,Y_{1}(\cdot)\,$ and $\,Y_{2}(\cdot)\,$, we obtain also $\,L^{-D}(\cdot) = L^{Y_{2} - Y_{1}}(\cdot) \equiv 0\,$, as well as $\, \widehat{L}^{D}(\cdot) \equiv 0\,$. Furthermore, we note that on the strength of Corollary 2.6 of \textsc{Ouknine \& Rutkowski} \cite{MR1324099} %(1995)
 this implies that the symmetric local time $\, \widehat{L}^{M}(\cdot) \,$ of the maximum $$\,M(\cdot) \, :=\, Y_{1}(\cdot) \vee Y_{2}(\cdot) \, =\, Y_{1}(\cdot) + \big(Y_{2}(\cdot) - Y_{1}(\cdot) \big)^{+}\,$$ is   given as 
\[
\widehat{L}^{M}(\cdot) \, :=\,  \widehat{L}^{\,Y_{1} \vee Y_{2}}(\cdot) \, =\, \int_{0}^{\,\cdot} 
{\bf 1}_{\{Y_{2}(t) \le 0\}} \, {\mathrm d} \widehat{L}^{\,Y_{1}}(t) +  \int_{0}^{\,\cdot} {\bf 1}_{\{Y_{1}(t) < 0\}} \, {\mathrm d} \widehat{L}^{\,Y_{2}}(t)   \, . 
\]
We combine now these results with the \textsc{Tanaka}  formula, to obtain the dynamics of the maximum  
\[
M(\cdot) \, =\, y + \int_0^{\, \cdot} {\bf 1}_{\{Y_{1} (t) \ge Y_{2}(t)\}}\, {\mathrm d} Y_{1}(t)   + \int_0^{\, \cdot} {\bf 1}_{\{Y_{1}(t)< Y_{2}(t)\}} \, {\mathrm d} Y_{2} (t) +      
L^{Y_{2} - Y_{1}}(\cdot) ~~~~~~%~~~~~~~~~~~~~~~~~~~~~
\]
\[
~~~~ ~~~\, =\,y+  \frac{\, \rho - \sigma \, }{\sqrt{2\,\,}}   \int_0^{\, \cdot} \, \overline{\text{sgn}} (M(t)) \, {\mathrm d} {\bm \beta} (t)  - \frac{\, \rho + \sigma \, }{2} \, 
{\bm \vartheta} (\cdot) + 2\,\big(2\alpha - 1\big) \,  
\widehat{L}^{M}(\cdot)  \,,
\]

\medskip
\noindent
and observe that these are the same as those of (\ref{ESTE}). But uniqueness in distribution holds for the equation (\ref{ESTE}), so the distribution of the process $\,M(\cdot)\,$ is the same as that of $\,Y_{1}(\cdot)\,$; and of course we have $\, M (\cdot) \ge Y_{1}(\cdot)\,$ a.s. This implies $\, M (\cdot) \equiv  Y_{1}(\cdot)\,$, thus $\, Y_1 (\cdot) \equiv  Y_{2}(\cdot)\,$ a.s. 

\smallskip
Therefore, the solution  to (\ref{ESTE}) is pathwise unique,   hence also strong by the theory of \textsc{Yamada \& Watanabe} (e.g., \textsc{Karatzas \& Shreve} \cite{MR1121940}%(1991)
, pages 308-311). 
 \end{proof}

 %   SECTION  6

%%%%%%%%%%%%%%%%% 
\section{Some Special Cases}
\label{sec6}
%%%%%%%%%%%%%%%%% 

\noindent
When $\, \alpha = 1/2\,$, that is, $\, \eta = \zeta \neq 0\,$ or equivalently \begin{equation}
\label{A1/2} 
\eta_1 - \eta_2\,=\, \zeta_2 - \zeta_1 \,\neq\,2\,,
\end{equation}
 the equation (\ref{SBBBM}) for the difference $\, Y(\cdot) = X_1 (\cdot) - X_2 (\cdot)\,$ becomes that of Brownian motion with bang-bang drift    
$$
Y (t) \,=\, y - \lambda \int_0^t  \mathrm{sgn}  \big( Y(s)\big)\,   \mathrm{d} s     + W(t)\,, \qquad 0 \le t < \infty 
$$
as in (\ref{KS}).  In this special case $\, \eta = \zeta \neq 0\,$ and with $\,\sigma = \rho\,$, the existence and uniqueness of (\ref{1.3})-(\ref{1.4}) can be shown also by   direct application of Theorem 3.5 of \textsc{Sznitman \& Varadhan} \cite{MR833269} 
and a \textsc{Girsanov}'s change-of-measure, with the aid of the local time relationships (\ref{alpha}).

On the other hand, when $\, \beta =1\,$ or equivalently
 \begin{equation}
\label{B1}
\eta \, \big( 1 - \overline{\zeta} \,\big) \,=\, \zeta \, \big( 1 - \overline{\eta} \, \big)\,,
\end{equation}
we observe from (\ref{2.12.a}) that the sum $\, X_1 (\cdot) + X_2 (\cdot)\,$ is just  standard Brownian motion with drift $\, \nu = g-h\,$. 

\smallskip
Let us single out now, and study, some more interesting special cases.

%%%%%%%%%%%%%%%%%%%%%%%%%
\subsection{Perfect Reflection for Individual Particles Upon Collision}
\label{Rem1}
%%%%%%%%%%%%%%%%%%%%%%%%%%%%%%
 
 \noindent
Suppose  $\, \alpha =1\,$, or equivalently $\, \zeta =0\,$ and  $\, \eta \neq 0\,$ from (\ref{1.5}), that is 
\begin{equation}
\label{1.10}
\zeta_2 - \zeta_1\,=\, 2\,\neq\, \eta_1 - \eta_2 \,,
\end{equation}
and that $\, x_1 \ge x_2\,$. We see then from (\ref{alpha})    that we have $\, L^Y_- (\cdot) \equiv L^{X_2 - X_1} (\cdot) \equiv 0\,$,  and that the equation (\ref{SBBBM}) becomes $$ Y (t) \,=\, y - \lambda \, t + W(t) + L^Y (t)\,, \qquad 0 \le t < \infty\,.$$ From the theory of the \textsc{Skorokhod} problem (e.g., \textsc{Karatzas \& Shreve} \cite{MR1121940}%(1991)
, pages 209-210) we conclude 
$$
L^Y ( t) \,=\, \max_{\,0 \le s \le  t} \big( -y + \lambda \, s - W(s)\big)^+\,, \qquad 0 \le t < \infty 
$$ 
thus $\, Y (\cdot) \ge 0\,$, and that strength and pathwise uniqueness hold; for more general results along these lines see \textsc{Chitashvili  \& Lazrieva} \cite{MR636083}. %(1981).
  It is also clear from the last two displayed equations, that   the filtration identity $ \,  \mathfrak{F}^Y (t) \,=\, \mathfrak{F}^W (t)\,, ~ \, \,0 \le t < \infty\,$ in (\ref{YW}) also holds.

In this case, then, when the particles collide,  the trajectory $\, X_1(\cdot)\,$ of the first particle  bounces off the trajectory $\, X_2(\cdot)\,$ of the second particle   as if this latter    were a perfectly reflecting lower boundary. We can visualize the situation by saying that, under the conditions of (\ref{1.5}) and (\ref{1.10}), the second particle is ``heavy'' (unaffected by collisions), whereas the first particle is ``light'' in that it bounces off (reflects perfectly) when colliding with the heavy particle. 

\medskip
\noindent
$\bullet~$ 
The ``symmetric'' situation obtains for $\, \alpha =0\,$,  that is $\, \zeta \neq 0\,$ and  $\, \eta = 0\,$  
or equivalently
\begin{equation}
\label{1.11}
\zeta_2 - \zeta_1\,\neq\, 2\,=\, \eta_1 - \eta_2 \,\,;
\end{equation}
in this case and again with $\, x_1 \ge x_2\,$, when the two particles collide,   the second particle bounces off the first as if this latter were a perfectly reflecting upper boundary; it is the first particle that is now ``heavy'', and the second that is ``light''.

%%%%%%%%%%%%%%%%%%%%%%%%%
\subsection{Frictionless Collision}
\label{Rem2}
%%%%%%%%%%%%%%%%%%%%%%%%%%%%%%

\noindent
It follows also from (\ref{alpha}) that the local times disappear entirely in (\ref{1.3}) when we have the configuration of parameters $\, (1-\zeta_1) \alpha + (1-\eta_1)(1 - \alpha) =0\,$, or equivalently
\begin{equation}
\label{1.8}
(1-\zeta_1)\, \eta + (1-\eta_1)\, \zeta \,=\,0\,;
\end{equation}
in this case the trajectory of the first particle crosses that of the second without ``feeling it'', that is, without being subjected to any local time drag. 

Similarly, the  second particle crosses the first in the same frictionless manner, that is, the local times disappear entirely in (\ref{1.4}), if 
\begin{equation}
\label{1.9}
(1-\zeta_2)\, \eta + (1-\eta_2)\, \zeta \,=\,0\,.
\end{equation}
$\bullet~$  If both (\ref{1.8}) and (\ref{1.9}) hold, then all such crossings are {\it completely frictionless}. We note that (\ref{1.8}) and (\ref{1.9}) are both satisfied, if and only if 
\begin{equation}
\label{Frictionless}
\eta_1 + \zeta_1 \,=\,\eta_2 + \zeta_2 \,=\,2 
\end{equation}
holds. This condition implies $\, \eta = \zeta\,$ (so when this common value is nonzero we are in the   case $\, \alpha = 1/2\,$ mentioned at the start of the section), and  
  is obviously satisfied in the special case $\, \eta_1 = \zeta_1 = \eta_2 = \zeta_2 =1\,$   studied by \textsc{Fernhoz et al.} \cite{FIKP2012}.  %(2012). 
  However, (\ref{Frictionless}) holds  also   for other configurations of  parameters, for instance  $\,\zeta_1=  \eta_2 =1/2\,$, $\, \eta_1 =  \zeta_2 =  3/2\,$. 
  
  The condition (\ref{Frictionless}) gives   the value $\, \beta =1\,$ for the parameter of (\ref{SP1}); back in (\ref{2.16}), (\ref{2.17}), this implies that the collision local time $\, L^{R_1 - R_2}(\cdot)\,$ ``gets apportioned equally to the ranks''.

%%%%%%%%%%%%%%%%%%%%%%%%%
\subsection{Elastic Collisions}
\label{Elast}
%%%%%%%%%%%%%%%%%%%%%%%%%%%%%%

\noindent
Beyond these two extremes  of perfect reflection and frictionless collision --  that is, for all other configurations of parameters -- we have collisions that are ``elastic'': neither completely frictionless, nor perfectly reflecting.

 %%%%%%%%%%%%%%%%%%%%%%%%%
\subsection{Brownian motion reflected on an independent Brownian motion}
\label{Rem3}
%%%%%%%%%%%%%%%%%%%%%%%%%%%%%%

\noindent
 Finally, let us consider the case $\, \beta   =0\,$ or equivalently 
$\,    \eta\, \overline{\zeta} + \zeta\, \overline{\eta}     =0\,$, that is
\begin{equation}
\label{1.12}
2\, \big(\zeta_1 + \zeta_2+\eta_1 + \eta_2\big)  \,=\,\big(\zeta_1 + \zeta_2 \big) \, \big( \eta_1 - \eta_2\big)  - \big( \eta_1 + \eta_2\big)\, \big(\zeta_1 - \zeta_2 \big)
\end{equation}
in light of (\ref{SP1}) and (\ref{1.6}), (\ref{1.7}). This happens, for instance, when $\, \zeta_1 = 3/4\,$, $\, \zeta_2 = 9/4\,$, $\, \eta_1 = - 4 / 3\,$, $\, \eta_2 = - 8/3\,$; in this case we have $\, \alpha = 4 /7\,$ and of course $\, \beta   =0\,$.  

\smallskip
Under the condition (\ref{1.12}),    the laggard in (\ref{2.17}) feels no pressure (local time drag) from the leader; it just evolves like Brownian motion with variance $\, \sigma^2\,$ and  nonnegative drift. On the other hand, the leader in (\ref{2.16}) evolves like an independent Brownian motion with variance $\, \rho^2\,$ and  nonpositive drift, reflected off the trajectory of the laggard. Such a process has been studied by \textsc{Burdzy   \& Nualart} \cite{MR1902187} %(2002) 
(see also \textsc{Soucaliuc et al.} \cite{MR1785393}, %(2000),
   \textsc{Soucaliuc   \& Werner} \cite{MR1917545});  %(2002)
here it arises as a special case of the ranked system  (\ref{2.16}), (\ref{2.17}) for the particles whose motions are governed by the equations (\ref{1.3}), (\ref{1.4}). 

%\smallskip
We have in this  case  $\, \beta =  0 \,$  an interesting fusion: the ``perfect reflection'' we saw in subsection \ref{Rem1}, and   the  ``frictionless motion'' of subsection \ref{Rem2}, are occurring here simultaneously -- not for the  motions of the individual particles, however, but rather  for the motions of their  ranked versions, the leader $\,R_1(\cdot)\,$ and the laggard $\,R_2(\cdot)\,$, respectively. To put it a little differently: starting with two particles that undergo skew-elastic collisions one is able, under the conditions of (\ref{1.5}) and (\ref{1.12}), to ``simulate  a heavy particle'' (the laggard) and a ``light'' particle (the leader).

\medskip
\noindent
$\bullet~$ 
The ``reverse'' situation obtains when $\, \beta   =2\,$ or equivalently 
$\,    \eta\, \overline{\zeta} + \zeta\, \overline{\eta} \,=2\, (\eta  + \zeta)\,$, that is
\begin{equation}
\label{1.13}
2\, \big(\zeta_1 + \zeta_2+\eta_1 + \eta_2\big)  \,=\,4 \,\big(4+ \zeta_1 - \zeta_2-\eta_1 + \eta_2\big)+\,\big(\zeta_1 + \zeta_2 \big)  \big( \eta_1 - \eta_2\big)  - \big( \eta_1 + \eta_2\big)  \big(\zeta_1 - \zeta_2 \big)\,;~~~
\end{equation}
 then it is the trajectory of the laggard (now the ``light'' particle) that gets reflected off that of the leader (now the ``heavy'' particle). 
This happens, for instance, when $\, \zeta_{1}  = 3  / 2\, $, $\, \zeta_{2} = 3\,$, $\, \eta_{1} = 7 / 3\,$, $\, \eta_{2} = 1\,$; in this case we have $\, \alpha = 4 /7\,$ and $\, \beta   =2\,$.

  %%%%%%%%%%%%%%%%%%%%%%%%%
\subsection{Some Simulations}
\label{Sim}
%%%%%%%%%%%%%%%%%%%%%%%%%%%%%%

\noindent
The pictures (Figures \ref{f1}-\ref{f4}) that follow present simulations of the processes $\, X_1 (t)\,$ (in black) and $\, X_2 (t)\,$ (in red) for $\, t \in [0,1]\,$, in black and red, respectively,  with drifts $\, g=h=1\,$ in the degenerate case  $\, \rho =0\,$.

\vspace{-40pt}  
\begin{figure}[H]
\begin{center}
%\scalebox{.7}{ \rotatebox{0}{\hspace{-40pt}\includegraphics{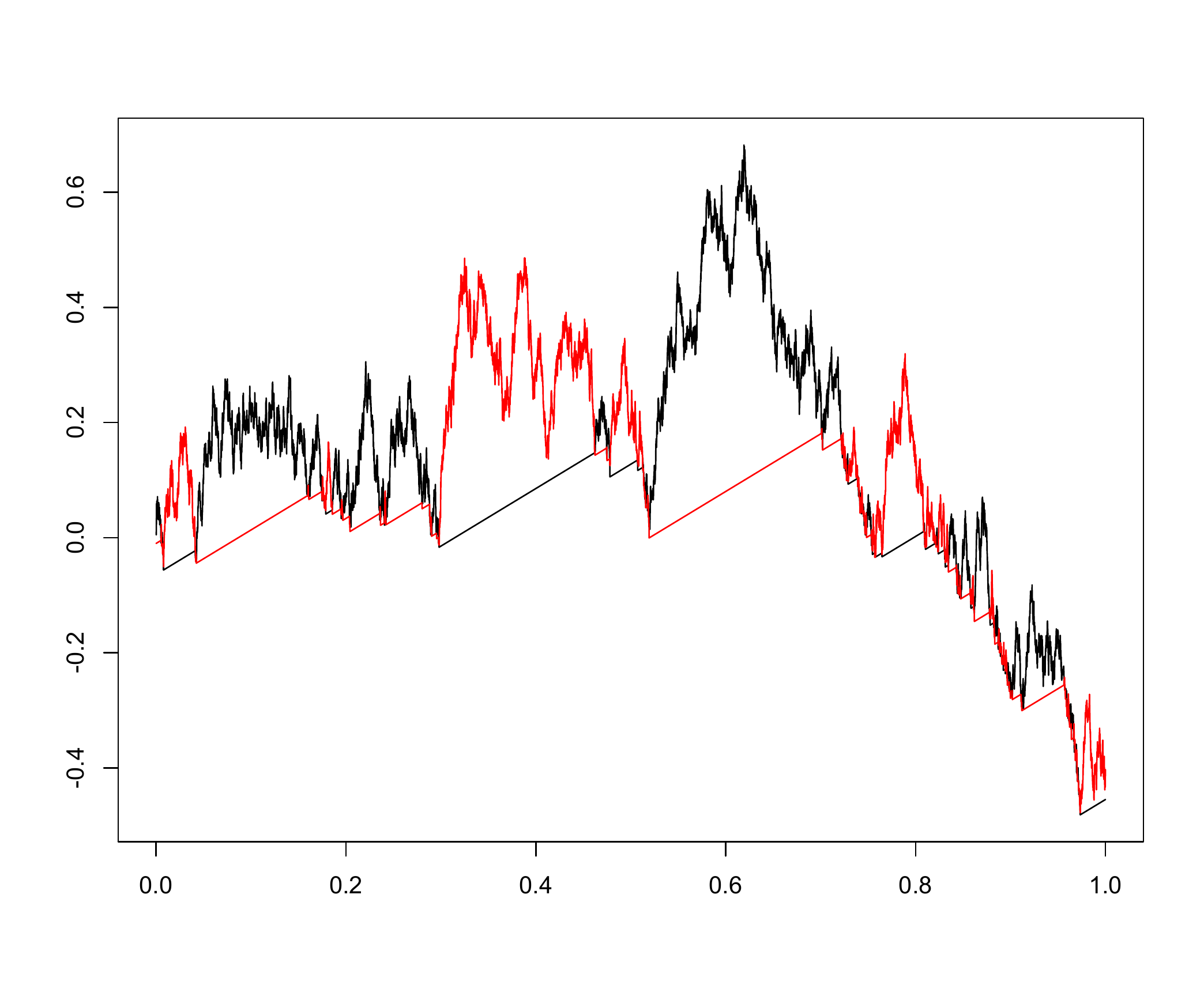}}}
\scalebox{.61}{ \rotatebox{0}{\hspace{-40pt}\includegraphics{f1.pdf}}}
\vspace{-30pt}
\caption{$~ \zeta_1 = \zeta_2 = \eta_1 = \eta_2 =1$
 ; $\, \alpha = 1/2 \, $, $\, \beta = 1\,$. 
}
\label{f1}
\end{center}
\vspace{-40pt}
\end{figure}

%\vspace{10pt}
\begin{figure}[H]
\vspace*{-20pt}
\begin{center}
\scalebox{.61}{ \rotatebox{0}{
\hspace{-40pt}\includegraphics{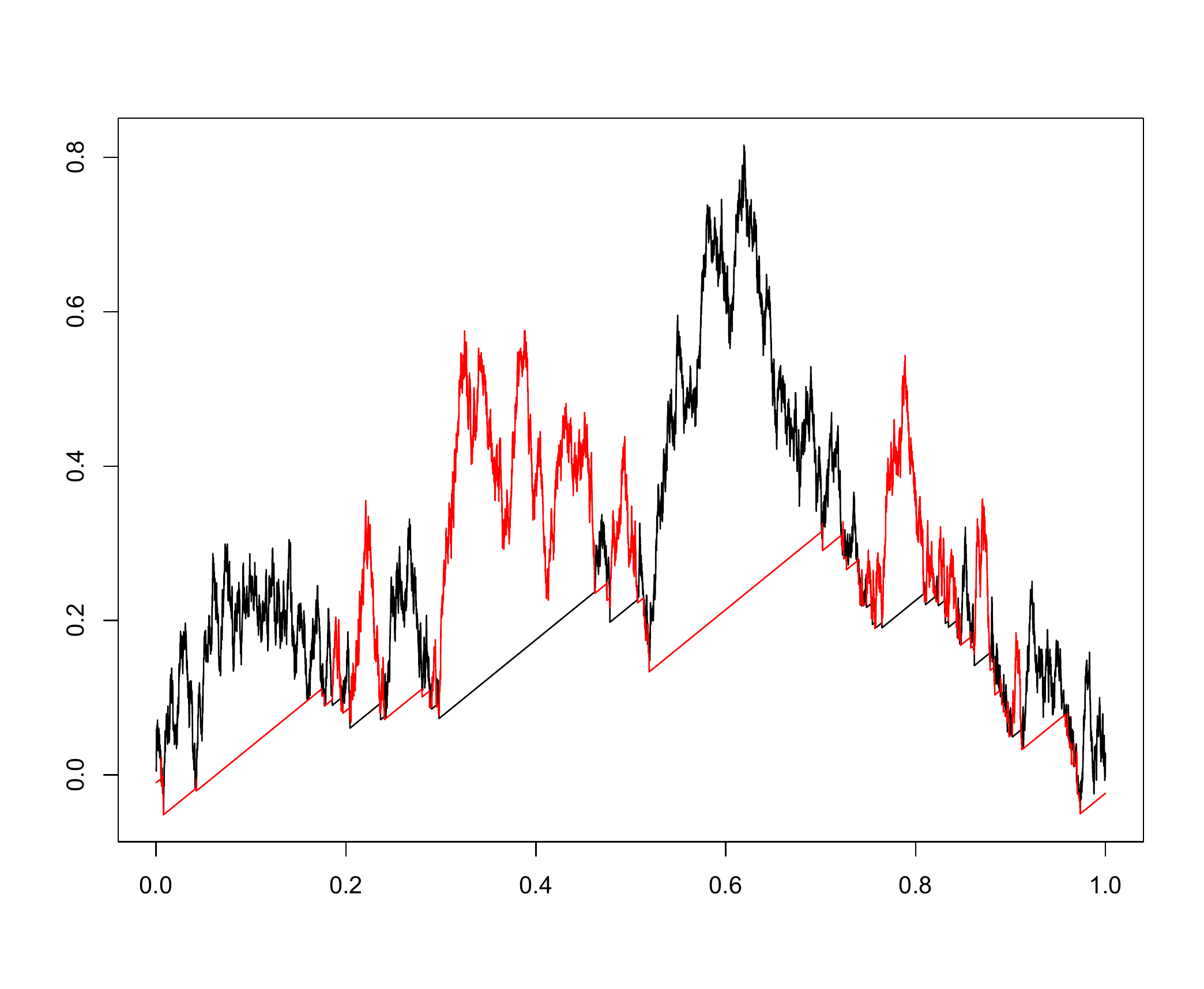}}}
\vspace{-30pt}
\caption{$~\zeta_1 =0\,$, $\, \zeta_2 = \eta_1 = \eta_2 =1$ 
 ; $\,\alpha = 2/3\,$, $\, \beta = 2/3\,$. 
%\vartheta_{11}=0.5$, $~\vartheta_{12}=0$, $~\vartheta_{21}=0$, $~\vartheta_{22}=0$ 
}
\label{f2}
\end{center}
\vspace{-40pt}
\end{figure}

%\vspace{10pt}
\begin{figure}[H]
%\vspace*{-20pt}
\begin{center}
\scalebox{.61}{ \rotatebox{0}{
\hspace{-40pt}\includegraphics{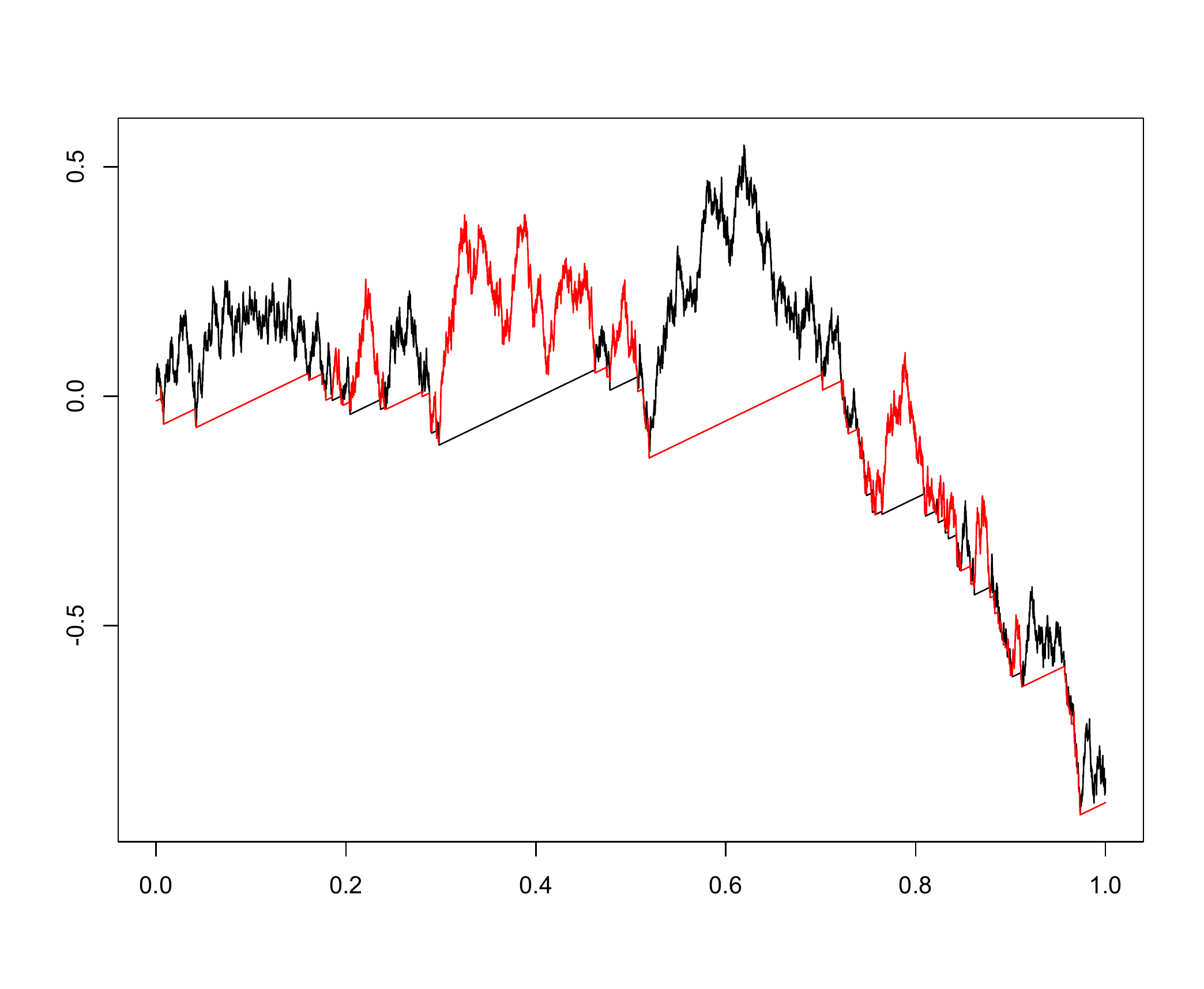}}}
\vspace{-30pt}
\caption{$~\zeta_2 =2\,$, $\, \zeta_1 = \eta_1 = \eta_2 =1$
 ; $\, \alpha = 2/3\,$, $\, \beta = 4/3\,$. 
%\vartheta_{11}=0$, $~\vartheta_{12}=0$, $~\vartheta_{21}=0.5$, $~\vartheta_{22}=0$ 
}
\label{f3}
\end{center}
\vspace{-40pt}
\end{figure}

%\vspace{10pt}
\begin{figure}[H]
\vspace*{-20pt}
\begin{center}
\scalebox{.61}{ \rotatebox{0}{
\hspace{-40pt}\includegraphics{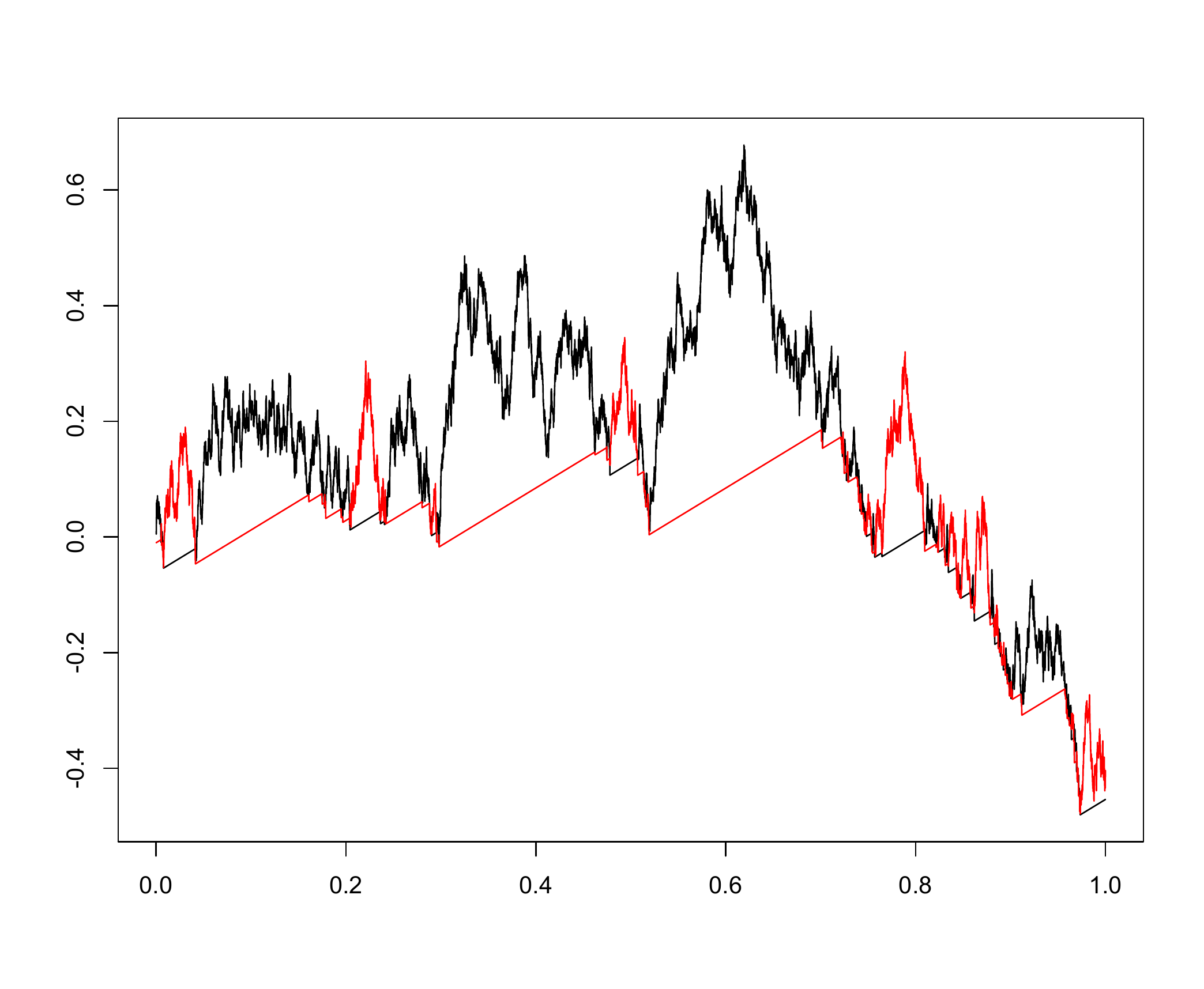}}}
\vspace{-30pt}
\caption{$~\zeta_1 =0\,$, $\, \zeta_2=2\,$, $\, \eta_1 = \eta_2 =1$ 
 ; $\, \alpha = 1\,$, $\,\beta = 1\,$. 
%\vartheta_{11}=0.5$, $~\vartheta_{12}=0$, $~\vartheta_{21}=0.5$, $~\vartheta_{22}=0$ 
}
\label{f4}
\end{center}
\vspace{-40pt}
\end{figure}

%\begin{figure}[H]
%\begin{center}
%\scalebox{.7}{ \rotatebox{0}{
%\hspace{-20pt}\includegraphics{q9090.pdf}}}
%\vspace{-30pt}
%\caption{$~\zeta_1 =-0.8\,$, $\, \zeta_2=2.8\,$, $\, \eta_1 = \eta_2 %=1$ ; $\,\alpha = 1\,$, $\,\beta = 1\,$. 
%       %\vartheta_{11}=0.9$, $~\vartheta_{12}=0$, $~\vartheta_{21}=0.9$, $~\vartheta_{22}=0$ 
%}
%\label{f5}
%\end{center}
%\vspace{-40pt}
%\end{figure}

\bigskip
\bigskip

%   SECTION  5

%%%%%%%%%%%%%%%%%%%%%%%%%
\section{Skew Brownian Motion with Bang-Bang Drift }
\label{sec5}
%%%%%%%%%%%%%%%%%%%%%%%%%%%%%%

\noindent
We   study here the stochastic differential equation (\ref{SBBBM}) for the skew Brownian motion with bang-bang drift  
 \begin{equation}
\label{BBD}
\mathfrak{ b} (y) \,=\, - \lambda \, \text{sgn} (y)\,,\qquad y \in \R
\end{equation}
for some given constant $\, \lambda >0\,$, with the notation of (\ref{sgn}), and with skewness parameter $\,    \alpha \in [0, 1]\,$. The cases $\, \alpha =0\,$ and $\, \alpha =1\,$ have been discussed already in subsection \ref{Rem1}, so we focus here on the range $\, 0 <  \alpha < 1\,$. 

 \smallskip
For  this range of values of the skewness parameter, we choose to write the equation (\ref{SBBBM}) in terms of the right-continuous local time of the unknown process at the origin, namely
 \begin{equation}
\label{HSBB}
 Y(\cdot) \,=\, y_0  - \lambda \int_0^{\, \cdot} \text{sgn} \big( Y(t)\big) \, \mathrm{d} t+ W(\cdot) + { \, 2\, \alpha -1\, \over \alpha }\, L^Y (\cdot)\, .
\end{equation}
This equation is   of the more general form 
\begin{equation}
\label{LG}
Y(\cdot) \,=\, y_0 + \int_0^\cdot \taub (Y(t))\, \mathrm{d} W(t) +\int_\R \, L^Y (\cdot\,, \xi)\, \nub (\mathrm{d} \xi)\,, 
 \end{equation}
 with dispersion $\, \taub (y) \equiv 1\,$ and  measure   $$\, \,\nu (\mathrm{d} y)\, =\, 2\,\mathfrak{ b} (y)\, \mathrm{d} y+ \frac{\,2\, \alpha -1\,}{ \alpha} \,\deltab_0 (\mathrm{d} y)\,$$ with $\, \mathfrak{ b} (y)= - \lambda \, \text{sgn} (y)\,,~~ y \in \R\,$ as in (\ref{BBD}) above,  and with $\,\deltab_0 (\cdot)\,$   the Dirac mass at the origin.

\smallskip
We shall deal with the equation (\ref{HSBB}) using a direct methodology that removes the parts of finite variation, that is, both the drift and the local time, and ``reduces'' (\ref{HSBB}) to a stochastic differential equation in natural scale
\begin{equation}
\label{SDE_Y}
Z (\cdot) \,=\, \mathfrak{p} (y_0) + \int_0^{\, \cdot} \mathfrak{  s} \big( Z(t) \big)\, \mathrm{d} W(t) 
\end{equation}
for appropriate functions $\, \mathfrak{p} (\cdot)\,$ and $\, \mathfrak{s} (\cdot)\,$. This  approach was pioneered for the skew Brownian motion itself (i.e., with $\, \lambda =0\,$) by \textsc{Harrison \& Shepp} \cite{MR606993} %(1981)
, and for more general equations of the form (\ref{LG})  for suitable measurable functions $\, \taub (\cdot)\,$  and measures $\, \nub \,$ on $\, \mathcal{B} (\R)\,$, by  \textsc{Le$\,$Gall} \cite{MR0770393} \cite{MR777514} %(1983, 1984) 
and \textsc{Engelbert \& Schmidt} \cite{MR798317}. %(1984). 
The results in these works do not seem to cover the equation (\ref{HSBB}),   but those in \textsc{Bass  \& Chen} \cite{MR2203887} %(2005) 
do; we have preferred to detail a direct construction which is, in our opinion at least, quite simpler. 
 
 \smallskip
 In this spirit, let us introduce the scale function 
$$
\mathfrak{ p} (y)= { \, 1 - \alpha\, \over 2\, \lambda} \left( e^{\, 2 \lambda y} - 1\right), ~~~y>0\,;\qquad \mathfrak{ p} (0)=0\,; \qquad \mathfrak{ p} (y)= { \,   \alpha\, \over 2\, \lambda} \left( 1- e^{\, - 2 \lambda y}  \right) , ~~~y<0\,.
$$
This has left-continuous derivative
$$ 
\mathfrak{p}^\prime (y) \,=\, ( 1 - \alpha )\,e^{\, 2 \lambda y}\,  \mathbf{ 1}_{ (0, \infty)} (y) + \alpha \,e^{\, - 2 \lambda y}\,  \mathbf{ 1}_{ (-\infty, 0]} (y)\,, \qquad y \in \R 
$$
which is bounded away from zero, and   second derivative measure
$$
\mathfrak{p}^{\prime \prime} (\mathrm{d}y) \,=\, - 2 \,\mathfrak{ b} (y) \, \mathfrak{p}' (y)\, \mathrm{d}y + \big( 1- 2 \, \alpha \big) \, \deltab_0 (\mathrm{d}y)\, .
$$
Likewise, we introduce the inverse 
$$
\mathfrak{q} (z)= { \, 1 \, \over \, 2\, \lambda\, } \log  \left( 1 + {\, 2\, \lambda \, z\,  \over \, 1 - \alpha\, } \right), ~~~z>0\,;\quad \mathfrak{q} (0)=0\,; \quad \mathfrak{q} (z)= { \, -1 \, \over \, 2\, \lambda\, } \log  \left( 1 - {\, 2\, \lambda \, z\,  \over \,   \alpha\, } \right), ~~~z<0
$$
of the function $\,\mathfrak{p}(\cdot)\,$, as well as its left-continuous derivative and its second-derivative measure 
$$ 
\mathfrak{q}^{\prime} (z) \,=\,  \big( 1 - \alpha  +  2\, \lambda \, z \big)^{-1}  \, \mathbf{ 1}_{ (0, \infty)} (z) + \big(   \alpha  -  2\, \lambda \, z \big)^{-1} \,\mathbf{ 1}_{ (-\infty, 0]} (z)\,,
$$
$$
  \mathfrak{q}^{\prime \prime} (\mathrm{d} z) \,=\, 2\,  \mathfrak{b}  (z) \, \big( \mathfrak{q}^{\prime} (z) \big)^2 \, \mathrm{d} z + 
    { \, 2\, \alpha -1\, \over \, \alpha \, ( 1 - \alpha)\,} \,\deltab_0 (\mathrm{d} z)\,.
$$
 
 \medskip
 \noindent
{\it Analysis:}  Assume that a solution to (\ref{HSBB}) has been constructed; in particular, the process $\, Y(\cdot)\,$ is then a continuous semimartingale for which (\ref{ZS})  holds a.s. We look at the process $\, Z(\cdot) : =  \mathfrak{p} (Y(\cdot))\,$ and apply the \textsc{It\^o-Tanaka} rule
$$
\mathrm{d} Z(t) \,=\, \mathfrak{p}^{\prime} (Y(t))\, \mathrm{d} Y(t) + \mathrm{d} \int_\R \,  L^Y (t, y) \,  \mathfrak{p}^{\prime \prime} (  \mathrm{d} y)  
$$
of (\ref{ITOT1}),  to obtain
  $$
\mathrm{d} Z(t) \,=\, \mathfrak{p}^{\prime} (Y(t))\,\Big[\, \mathfrak{b} (Y(t))\, \mathrm{d} t+ \mathrm{d} W(t) + { \, 2\, \alpha -1\, \over \alpha }\,\mathrm{d} L^Y (t)\, \Big]~~~~~~~~~~~~~~~~~~~~~~~~~~~~~~~~~~~~~~~
$$  
$$
~~~~~~~~~~~~~~~~~~~~ -   \mathrm{d}  \int_\R 2\, \mathfrak{b}  (y)\, \mathfrak{p}^{\prime} (y) \, L^Y (t, y)\, \mathrm{d} y \,+\, (1 - 2\, \alpha )\, \mathrm{d} L^Y (t)\,=\, \mathfrak{p}^{\prime} \big( \mathfrak{q} (Y(t) ) \big)\, \mathrm{d} W(t)\,.
$$
%\newpage
We have  used here the occupation-time-density formula (\ref{OTD}), and the property  $\, \mathfrak{p}^{\prime}  (0) = \alpha\,$. Now the piecewise-linear function 
\begin{equation}
\label{NewS}
\mathfrak{s}(z)\,:=\, \mathfrak{p}^{\prime} \big( \mathfrak{q}  (z) \big)\,=\, { 1 \over \,\mathfrak{q}^{\prime}(z)\,} \,=\,\big( 1 - \alpha  +  2\, \lambda \, z \big)   \, \mathbf{ 1}_{ (0, \infty)} (z) + \big(   \alpha  -  2\, \lambda \, z \big)  \,\mathbf{ 1}_{ (-\infty, 0]} (z)\,, \quad z \in \R~~
\end{equation}
is  bounded away from the origin, so the process $\, Z(\cdot)\,$ is the pathwise unique, strong solution of the stochastic differential equation (\ref{SDE_Y}) for this new dispersion function (\textsc{Nakao} \cite{MR0326840}%(1972)
); and because  $\, Y(\cdot)\,$ and $\, Z(\cdot)\,$ are bijections of each other, we have again the filtration identities
\begin{equation}
\label{strong}
 \mathfrak{F}^Z (t) \,=\,  \mathfrak{F}^Y (t) \,=\, \mathfrak{F}^W (t)\,, \qquad \, \,0 \le t < \infty\, .
 \end{equation}

\medskip
 \noindent
{\it Synthesis:} Consider the strong solution $\,Z(\cdot)\,$ of the stochastic differential equation (\ref{SDE_Y}) with the new dispersion function of (\ref{NewS}),  and define the process $\, Y(\cdot) : =  \mathfrak{q} (Z(\cdot))\,$. Since $\, \mathrm{d} Z(t) = \mathfrak{s} (Z(t))\, \mathrm{d} W(t)\,$ and 
$\, \mathrm{d} \langle Z \rangle (t) = \mathfrak{s}^2 (Z(t))\, \mathrm{d} t\,$, this process satisfies almost surely 
$$
\int_0^\infty \mathbf{ 1}_{ \{ Z(t)=0 \} }\, \mathrm{d} t\,=\,\int_0^\infty \mathbf{ 1}_{ \{ Z(t)=0 \} }\, { \, \mathrm{d} \langle Z \rangle (t)\, \over \, \mathfrak{ s}^2 (Z(t))\,} \,\le \, \big( \min (\alpha, \, 1- \alpha  ) \big)^{-2}\, \int_0^\infty \mathbf{ 1}_{ \{ Z(t)=0 \} }\, \mathrm{d} \langle Z \rangle (t)\,=\,0\,,  
$$
and we apply the \textsc{It\^o-Tanaka} rule to obtain
$$
\mathrm{d} Y(t) \,=\, \mathfrak{q}^{\prime} (Z(t))\, \mathrm{d} Z(t) + \mathrm{d}  \int_\R     L^Z (t, z) \,  \mathfrak{q}^{\prime \prime} (  \mathrm{d} z) \,.
$$
On the strength of the occupation-time-density formula and $\, \mathfrak{s}(\cdot) \,\mathfrak{q}^{\prime}(\cdot) \equiv 1\,$, this gives
$$
\mathrm{d} Y(t)\,=\,  \mathrm{d} W(t) +  \mathrm{d}  \int_\R 2\, \mathfrak{b}  (y)\, \big( \mathfrak{q}^{\prime} (y) \big)^2 \, L^Y (t, y)\, \mathrm{d} y \,+\, {\,   2\, \alpha -1\, \over \, \alpha \, (1 - \alpha)\,} \, \mathrm{d} L^Z (t) 
$$
$$
 =\,  \mathrm{d} W(t) \,+   \, 
 \mathfrak{b}  (Y(t))\, \big( \mathfrak{q}^{\prime} (Y(t)) \big)^2 \, \mathfrak{s}^{2} (Y(t))  \, \mathrm{d} t 
  \,+\,
  {\,   2\, \alpha -1\, \over \, \alpha \, (1 - \alpha)\,}  \, \mathrm{d} L^Z (t)
 $$
$$
 =\,  \mathrm{d} W(t) \,+   \, \mathfrak{b}  (Y(t))\, \mathrm{d} t \,+\, 
  {\,   2\, \alpha -1\, \over \, \alpha \,  }  \, \mathrm{d} L^Y (t) \,,
$$

\medskip
\noindent
that is, the equation (\ref{HSBB}) of the skew Brownian motion with bang-bang drift for the process $\, Y(\cdot)\,$. We have used here  the comparison of the local times at the origin for these two processes: 
\begin{equation}
\label{LXLY}
  L^Z (\cdot\, )\,=\, ( 1 - \alpha ) \, L^Y (\cdot) \,.
  \end{equation} 
  
 This last identity (\ref{LXLY}) can be justified as follows: We start by noting  
$$
L^Z (\cdot  )=\, \lim_{\varepsilon \downarrow 0} \, { 1 \over \, 2 \, \varepsilon \,} \int_0^\cdot \mathbf{ 1}_{ \{ 0< Z(t)< \varepsilon \} }\, \mathrm{d} \langle Z \rangle (t) =\,  \lim_{\varepsilon \downarrow 0} \, { 1 \over \, 2 \, \varepsilon \,} \int_0^\cdot \mathbf{ 1}_{ \{ 0< Y(t)< \mathfrak{ q}(\varepsilon) \} }\, \big( \mathfrak{ p}^\prime (Y(t)) \big)^2 \, \mathrm{d}   t   
$$
$$
=\,(1 - \alpha)\cdot \lim_{\varepsilon \downarrow 0} \, { \,1- \alpha\, \over \, 2 \, \varepsilon \,} \int_0^\cdot \mathbf{ 1}_{ \{ 0< Y(t)< \mathfrak{ q}(\varepsilon) \} }\, \left( \frac{\,\mathfrak{ p}^\prime (Y(t))\,}{1-\alpha} \right)^2 \mathrm{d}   t   \,.
$$
On the event $\, \{ 0< Y(t)< \mathfrak{ q}(\varepsilon) \}\,$ we have 
$$
1 \, \le \, \frac{\,\mathfrak{ p}^\prime (Y(t))\,}{1-\alpha} \,\le\, e^{\, 2 \lambda \,\mathfrak{ q}(\varepsilon) }\,; \quad \hbox{and since}~~~\lim_{\varepsilon \downarrow 0} \left( \frac{\,\mathfrak{ q}(\varepsilon)\,}{\varepsilon} \right)\,=\, \frac{1}{\, 1 - \alpha\,}\,,
$$ 
we deduce the claimed identity of (\ref{LXLY}), namely
$$
L^Z (\cdot  )=\,(1 - \alpha)\cdot \lim_{\varepsilon \downarrow 0} \, { \,1 \, \over \, 2\, \mathfrak{ q}(\varepsilon)\,   } \int_0^\cdot \mathbf{ 1}_{ \{ 0< Y(t)< \mathfrak{ q}(\varepsilon) \} }\,   \mathrm{d}   t  \,= \,( 1 - \alpha ) \, L^Y (\cdot)\,.
$$

\medskip
\noindent
$\bullet~$ Taken together, the Analysis and Synthesis parts of this argument establish the following result. 

\begin{thm}
  \label{Theorem 3}
The equation (\ref{SBBBM}) admits a pathwise unique, strong solution for all values of its ``skewness parameter'' $  \alpha \in [0, 1]\,$, and  we have the filtration identity $ \,  \mathfrak{F}^Y (t)= \mathfrak{F}^W (t)\,, ~   \,0 \le t < \infty $ in (\ref{YW}). 
 \end{thm}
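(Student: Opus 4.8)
The plan is to remove both finite-variation parts of (\ref{SBBBM}) — the bang-bang drift and the skew local-time drag — by a single scale transformation, thereby reducing the problem to a driftless one-dimensional equation (\ref{SDE_Y}) whose dispersion is piecewise linear and bounded away from zero, where classical one-dimensional theory applies. First I would dispose of the boundary cases $\alpha \in \{0,1\}$, which are exactly the perfectly reflecting situations of subsection \ref{Rem1}: there the equation reduces to a \textsc{Skorokhod} reflection problem with an explicit solution, so strength, pathwise uniqueness, and the filtration identity (\ref{YW}) are immediate. For the remaining range $0 < \alpha < 1$ I would rewrite (\ref{SBBBM}) in terms of the right-continuous local time, as (\ref{HSBB}), using the relations $2\widehat{L}^Y = L^{|Y|}$ and $L^Y = \alpha L^{|Y|}$ from (\ref{alpha}); these convert the symmetric drag $2(2\alpha-1)\widehat{L}^Y$ into $\tfrac{2\alpha-1}{\alpha}L^Y$.

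The heart of the argument is the construction of the scale function $\mathfrak{p}$ and its inverse $\mathfrak{q}$, chosen so that the second-derivative measure $\mathfrak{p}''(\mathrm{d}y) = -2\mathfrak{b}(y)\mathfrak{p}'(y)\,\mathrm{d}y + (1-2\alpha)\,\deltab_0(\mathrm{d}y)$ is tailored to annihilate both the drift and the skew term. In the \emph{Analysis} step I would assume a solution $Y$ of (\ref{HSBB}) exists — hence a continuous semimartingale for which (\ref{ZS}) holds — set $Z := \mathfrak{p}(Y)$, and apply the \textsc{It\^o-Tanaka} rule (\ref{ITOT1}). Using the occupation-time-density formula (\ref{OTD}) to rewrite the absolutely continuous part of the local-time integral, the drift cancels term by term; the atom of $\mathfrak{p}''$ at the origin, together with the value $\mathfrak{p}'(0)=\alpha$, is precisely what cancels the residual $\tfrac{2\alpha-1}{\alpha}L^Y$ contribution. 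The outcome is the natural-scale equation $\mathrm{d}Z = \mathfrak{s}(Z)\,\mathrm{d}W$ with $\mathfrak{s} = 1/\mathfrak{q}'$ as in (\ref{NewS}). Since $\mathfrak{s}$ is piecewise linear, of locally bounded variation, and bounded away from zero, the theory of \textsc{Nakao} gives pathwise uniqueness and strong solvability of (\ref{SDE_Y}); because $\mathfrak{p}$ and $\mathfrak{q}$ are mutually inverse homeomorphisms, the filtrations of $Y$, $Z$, and $W$ coincide, yielding (\ref{strong}) and hence (\ref{YW}). The converse \emph{Synthesis} step would start from the strong solution $Z$ of (\ref{SDE_Y}), set $Y := \mathfrak{q}(Z)$, first check via the lower bound on $\mathfrak{s}$ that $Y$ spends zero time at the origin, and then apply \textsc{It\^o-Tanaka} once more to recover (\ref{HSBB}), establishing existence.

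I expect the main obstacle to be the bookkeeping of the local-time terms at the origin, in two places. First, in the Analysis one must verify that the $\deltab_0$ atom in $\mathfrak{p}''$ exactly matches the skew coefficient: this is a one-line identity once $\mathfrak{p}'(0)=\alpha$ is known, but it is where the whole cancellation lives and is easy to get wrong by a factor of $\alpha$. Second, the Synthesis needs the comparison $L^Z = (1-\alpha)\,L^Y$ of (\ref{LXLY}) between the local times at the origin of the two processes; this is not a formal substitution but a genuine limiting argument, passing through the ratio $\mathfrak{q}(\varepsilon)/\varepsilon \to 1/(1-\alpha)$ and the two-sided bound on $\mathfrak{p}'/(1-\alpha)$ over the shrinking interval $\{0 < Y < \mathfrak{q}(\varepsilon)\}$. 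By contrast, checking that \textsc{Nakao}'s hypotheses are met — strict positivity and bounded variation of $\mathfrak{s}$ on compacts — is routine, as is the final assembly: the Analysis supplies uniqueness and the filtration identity, the Synthesis supplies existence, and together they give a pathwise unique strong solution for every $\alpha \in [0,1]$.
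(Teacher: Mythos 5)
Your proposal follows essentially the same route as the paper's own argument: the boundary cases $\alpha\in\{0,1\}$ are dispatched via the \textsc{Skorokhod} reflection problem of subsection \ref{Rem1}, and for $0<\alpha<1$ the equation is recast as (\ref{HSBB}), transformed by the scale function $\mathfrak{p}$ (with the atom of $\mathfrak{p}^{\prime\prime}$ at the origin and $\mathfrak{p}^{\prime}(0)=\alpha$ cancelling the local-time term) into the natural-scale equation (\ref{SDE_Y}), to which \textsc{Nakao}'s theorem applies, with the Synthesis step resting on the local-time comparison (\ref{LXLY}) proved by the same limiting argument via $\mathfrak{q}(\varepsilon)/\varepsilon\to 1/(1-\alpha)$. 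The proof is correct and matches the paper's Analysis/Synthesis structure in all essentials.
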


\begin{rem}
 \label{HS}
  As shown towards the end of section 3 in \textsc{Harrison \& Shepp} \cite{MR606993} %(1981)
  , the stochastic equation (\ref{SBBBM}) admits no solution for $\, \alpha \notin [0,1]\,$. Consequently, when $\, \eta + \zeta \neq 0\,$ holds but  the   condition (\ref{1.5}) fails because $\, \alpha = \eta / (\eta + \zeta)  \notin [0,1]\,$, the system of equations (\ref{1.3}), (\ref{1.4}) admits no solution. 
\end{rem}

\begin{rem}
 \label{SMF}
 We compute in the next subsection the transition probabilities of the diffusion   $\, Y(\cdot)\,$. It follows from these computations, and in conjunction with the theory developed in \textsc{Portenko} 
 \cite{MR522237} 
\cite{MR532446} 
\cite{MR1104660}
, that  this process has the strong \textsc{Markov} and \textsc{Feller} properties. 
\end{rem}

From the Remarks \ref{EZ0}, \ref{EZ1}, \ref{HS} and in conjunction with Theorem \ref{Theorem 1}, we obtain now the following result.

 \begin{prop}
  \label{Proposition 3}
The conditions of (\ref{1.5}) are not just sufficient but also necessary for the well-posedness of the system of equations (\ref{1.3}), (\ref{1.4}). 
 \end{prop}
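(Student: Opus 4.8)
The plan is to combine the sufficiency already contained in Theorem \ref{Theorem 1} with a case analysis, assembled from the Remarks above, that covers every way in which (\ref{1.5}) can fail. Sufficiency is immediate: whenever both conditions in (\ref{1.5}) hold, Theorem \ref{Theorem 1} produces a weak solution of (\ref{1.3}), (\ref{1.4}) that is unique in distribution, which is exactly what well-posedness means here. So the entire content lies on the necessity side.

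For necessity I would partition the complementary event ``(\ref{1.5}) fails'' into the three mutually exclusive, jointly exhaustive possibilities
\[
\text{(i)}~\eta=\zeta=0, \qquad \text{(ii)}~\eta=-\zeta\neq 0, \qquad \text{(iii)}~\eta+\zeta\neq 0~\text{but}~\alpha\notin[0,1],
\]
and verify that each defeats well-posedness. In case (i), Remark \ref{EZ0} shows that the difference equation (\ref{2.11}) collapses to (\ref{YLPM}), which is solved by the skew Brownian motion with bang-bang drift for \emph{every} value $\alpha\in[0,1]$; thus uniqueness in distribution already fails for $Y(\cdot)=X_1(\cdot)-X_2(\cdot)$, hence for the system. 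In case (ii), Remark \ref{EZ1} shows that the identity (\ref{LR}) forces $L^Y\equiv L^Y_-\equiv 0$, which contradicts the strictly positive probability that the bang-bang diffusion (\ref{KS}) accumulates local time at the origin; hence (\ref{2.11}), and a fortiori the system, has no solution. In case (iii), Remark \ref{HS}, resting on the \textsc{Harrison--Shepp} nonexistence result for (\ref{SBBBM}) when $\alpha\notin[0,1]$, again yields no solution of the difference equation, and so none of the system.

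Since (i)--(iii) exhaust precisely the failure of (\ref{1.5}), the two conditions are necessary as well as sufficient. The one structural point to record, already supplied by the analysis of Section \ref{sec3}, is the projection implication used repeatedly above: any solution $(X_1(\cdot),X_2(\cdot))$ of (\ref{1.3}), (\ref{1.4}) yields, through the difference $Y(\cdot)=X_1(\cdot)-X_2(\cdot)$, a solution of the scalar equation (\ref{2.11}), so that nonexistence (respectively, nonuniqueness in distribution) at the level of $Y(\cdot)$ propagates upward to the planar system. I do not expect a genuine analytic obstacle here: all the hard work already lives in Theorems \ref{Theorem 1}, \ref{Theorem 3} and in the three Remarks, and the only thing requiring care is the bookkeeping that confirms (i)--(iii) are both disjoint and exhaustive, together with the (straightforward) verification that the projection $Y=X_1-X_2$ transports both the nonexistence and the nonuniqueness conclusions from the scalar equation to the pair.
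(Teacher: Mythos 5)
Your proposal is correct and follows exactly the route the paper takes: the paper's proof is precisely the observation that Theorem \ref{Theorem 1} gives sufficiency, while Remarks \ref{EZ0}, \ref{EZ1} and \ref{HS} dispose of the three exhaustive failure cases $\eta=\zeta=0$, $\eta=-\zeta\neq 0$, and $\eta+\zeta\neq 0$ with $\alpha\notin[0,1]$ via the projection onto the difference $Y(\cdot)=X_1(\cdot)-X_2(\cdot)$. Your explicit remark that nonexistence and nonuniqueness propagate from the scalar equation (\ref{2.11}) up to the planar system is a useful clarification that the paper leaves implicit, but it is the same argument.
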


\smallskip

%%%%%%%%%%%%%%%%%%%%%%%%%
\subsection{Joint Distribution of SBBBM and its Local Time}
\label{sec5.1}
%%%%%%%%%%%%%%%%%%%%%%%%%%%%%% 

\smallskip
\noindent
Let us recall a construction of the skew Brownian motion (\textsc{It\^o  \& Mc$\,$Kean} \cite{MR0154338}%(1963)
, \cite{MR0345224}%(1974)
, \textsc{Walsh} \cite{TempsLocaux78}%(1978)
). We take a  Brownian motion starting from $\,y_{0} \ge 0\,$, reflect it at the origin, and consider its excursions away from the origin. Then we change the sign of each excursion independently with probability $\,1-\alpha \in (0, 1) \,$. The resulting process is positive with probability $\,\alpha\,$, negative with probability $\,1 - \alpha\,$. This implies a {\it non-symmetric  reflection principle} around the origin. We shall see that even in the presence of the bang-bang drifts $$\, \mathfrak{ b} (y)\,=\, - \lambda \,\text{sgn}  (y)\,,~~ ~~y \in \R\,$$
as in (\ref{BBD}), this principle continues to hold  for the skew Brownian motion. Thus, the joint distribution of SBBBM and its local time are derived here. 

\smallskip
By \textsc{Girsanov}'s theorem (e.g., \textsc{Karatzas \& Shreve} \cite{MR1121940}
%(1991)
, section 3.5), we consider the ``reference probability measure'' $\,\mathbb P_{\star}\,$, under which the process $     \,\int^{\cdot}_{0}\mathfrak b (Y(t))\, {\mathrm d} t + W(\cdot)\,$ becomes standard Brownian motion.  For every given $\,t \in [0, \infty)\,$ the \textsc{Radon-Nikod\'ym} derivative on $\, {\mathfrak F^{Y}(t)}\,$ of the original measure with respect to the reference measure, is 
\[
\frac{ {\mathrm d} \mathbb P\, \, }{\, {\mathrm d} \mathbb P_{\star} \, } \bigg \vert_{\mathfrak F^{Y}(t)}\, = \, \exp \Big\{   \int^{t}_{0} \mathfrak b (Y(s)) {\mathrm d} W(s) + \frac{\, 1 \, }{2} \int^{t}_{0} \mathfrak b^2 (Y(s)) {\mathrm d} s \Big \} 
\]
\[
~~~~~~~~\,~~
= \,\exp \Big\{ \lambda \big( \lvert y_0 \rvert - \lvert Y(t) \rvert + 2 \widehat{L}^{\,Y}(t) \big) - \frac{\, \lambda^{2}\, }{2} t \Big \} \,  ; 
\]

\medskip
\noindent
we have used in this last equation  the relationships (\ref{2.13}), (\ref{2.10}). 

%\newpage
Under the reference probability measure $\,\mathbb P_{\star}\,$, the process $\,Y(\cdot)\,$ is   skew Brownian motion  which starting at  $\,y_{0}\,$. As shown in \textsc{Walsh} \cite{TempsLocaux78} 
%(1978) 
(see also \textsc{Lang} \cite{MR1340556}%(1995)
), the transition probability density function $\,\mathfrak p_{\star } (t; y_{0}, \xi ) = \mathbb P_{\star} (Y(t) \in {\mathrm d} \xi ) \, /\, {\mathrm d} \xi  \,$ for this process is given by 
\[
\mathfrak p_{\star } (t; y_{0}, \xi ) \, =\, \frac{1}{\, \sqrt{2 \pi t}\, } \exp \Big \{ - \frac{\,(y_{0}-\xi )^{2}\, }{2 t} \Big\}  + (2 \alpha - 1) \cdot \text{sgn} (\xi ) \cdot \frac{1}{\sqrt{2 \pi t}} \exp \Big\{ - \frac{ \, (\lvert y_{0} \rvert + \lvert \xi  \rvert )^{2}\, }{2 t}\Big\} \,  
\]
for $\, (\xi , y) \in \R^{2}\,$, $\, t  > 0\,$. Moreover, by the method of elastic Brownian motion (e.g.,  \textsc{Karatzas \& Shreve} \cite{MR744236},  %(1984)
\textsc{Appuhamillage et al.} 
\cite{MR2759199}) %(2011)
the joint distribution of the skew Brownian motion and its symmetric local time is computed as

\[
\hspace{-11cm} 
\mathbb P_{\star} ( Y(t) \in {\mathrm d} \xi  \, , \, 2 \widehat{L}^{Y}(t) \in {\mathrm d} b)  \,=
\]
\[
= \,\big \{ 1 + (2 \alpha - 1) \, \text{sgn} (\xi )  \big \} \cdot \frac{\, \lvert \xi  \rvert + b + \lvert y_{0}\rvert\, }{ \sqrt{2 \pi t^{3}} } \exp \Big \{ - \frac{\, ( \lvert \xi \rvert + b + \lvert y_{0}\rvert )^{2} \,  }{2 \, t} \Big\} \, {\mathrm d} \xi  \, {\mathrm d} b \, ; \quad   b  > 0  
\]

\medskip
\noindent
and
\[
\mathbb P_{\star} \big( Y(t) \in {\mathrm d} \xi  \, ,\, 2 \widehat{L}^{Y}(t) \, =\, 0\big) \, =\, 
\frac{1}{\, \sqrt{2 \pi t}\, } \Big[ \exp \Big \{ - \frac{\,(\lvert y_{0} \rvert - \lvert \xi  \rvert)^{2}\, }{2 t} \Big\}  - \exp \Big \{ - \frac{\,(\lvert y_{0} \rvert + \lvert \xi  \rvert )^{2}\, }{2 t} \Big\} \Big] {\mathrm d} \xi     
\]
for $\,\xi  \in \R\,$. Note that we have $$\, 1 + (2 \alpha - 1) \, \text{sgn} (\xi ) \, =\, 2 \alpha ~~~~  \mathrm{if}~~  \,\xi  > 0\,,~~~~ \mathrm{and} ~~~~\,1 + (2 \alpha - 1) \, \text{sgn} (\xi ) \, =\,2 (1-\alpha) \,~~ \mathrm{if}~~\,\xi  \le 0\,.$$ Thus, the non-symmetric reflection principle around the origin works intuitively even for the joint distribution. As expected, when there is   no accumulation of local time   at the origin,  the skewness parameter $\,\alpha\,$ does not affect the transition probabilities.  

\medskip
\noindent
$\bullet~$ 
We bring the above formulae from the reference measure $\,\mathbb P_{\star}\,$ back to the original measure $\,\mathbb P\,$, by means of 

\begin{equation} 
\label{eq: joint}
\hspace{-6cm} \mathbb P \big (Y^{\pm}(t) \in A\, , Y^{\mp}(t) \, =\, 0\, , \, 2 \widehat{L}^{Y}(t) \in B \big) \,=~~~~~~~~~~~~~~~~~~~~~~~~~
\end{equation}
\[
~~~~~~~~~~~~~\, =\,  
\exp \big\{ \lambda   \lvert y_0 \rvert   -   \lambda^{2}\,t /  2  \big \}
\cdot \mathbb E^{\,\P_{\star}} \Big[\, \exp \big\{ 2 \,\widehat{L}^{Y}(t) - Y^{\pm}(t) \big \} \cdot \,  {\bf 1}_{\{ Y^{\pm}(t) \in A\, , Y^{\mp}(t) \, =\, 0\,,  \,2 \widehat{L}^{Y}(t) \in B\}} \,\Big]  \, 
\]

\medskip
\noindent
for $\, (A, B) \in \mathcal B (\R) \times \mathcal B(\R) \,$, $\, \,t > 0 \,$. With $\, (\xi , b) \in [0, \infty) \times (0, \infty) \,$, the joint density functions are 
\begin{equation} \label{eq: joint+}
\hspace{-6cm} \mathbb P ( Y^{+}(t) \in {\mathrm d} \xi  \, , Y^{-}(t) \, =\, 0\, , \, 2 \widehat{L}^{Y}(t) \in {\mathrm d} b ) \, = \, 
\end{equation}
\[
\, =\, 2 \alpha \cdot e^{\,-2\lambda \xi} \cdot 
\frac{\, \xi  + b + \lvert y_{0}\rvert\, }{ \sqrt{2 \pi t^{3}} } \exp \Big \{ - \frac{\, ( \xi  + b + \lvert y_{0}\rvert - \lambda t)^{2} \,  }{2 \, t} \Big\} \, {\mathrm d} \xi  \, {\mathrm d} b\, , 
\]
as well as 

\begin{equation} 
 \label{eq: joint-}
\hspace{-6cm} \mathbb P ( Y^{-}(t) \in {\mathrm d} \xi  \, , Y^{+}(t) \, =\, 0\, , \, 2 \widehat{L}^{Y}(t) \in {\mathrm d} b ) \, =\, 
\end{equation}
\[
\, =\, 2(1 -  \alpha) \cdot e^{\,-2\lambda \xi} \cdot 
\frac{\, \xi  + b + \lvert y_{0}\rvert\, }{ \sqrt{2 \pi t^{3}} } \exp \Big \{ - \frac{\, ( \xi  + b + \lvert y_{0}\rvert - \lambda t)^{2} \,  }{2 \, t} \Big\} \, {\mathrm d} \xi  \, {\mathrm d} b\, .
\]

\medskip
\noindent
Whereas, when there is no  accumulation of local time, we have 
\begin{equation} \label{eq: joint0}
\hspace{-6cm} \mathbb P \, \big( \,Y^{\pm}(t) \in {\mathrm d} \xi  \, , Y^{\mp}(t) \, =\, 0\, , \, 2 \widehat{L}^{Y}(t) \, =\,  0\,\big) \,=
\end{equation}
\[
=\, \frac{1}{\, \sqrt{2\pi t}\, } \Big( \exp \Big \{ - \frac{\,(\xi - \lvert y_{0}\rvert + \lambda t )^{2}\, }{2 t} \Big\} - e^{- 2\lambda \, \xi } \cdot \exp \Big \{ - \frac{\,(\xi  +  \lvert y_{0} \rvert + \lambda t )^{2}\, }{2 t} \Big\} \Big) {\mathrm d} \xi  \, , \quad \xi  > 0 \, . 
\]

\bigskip
\noindent
$\bullet~$ 
The marginal density $\,\mathfrak p  (t; y_{0}, \xi ) \, {\mathrm d} \xi = \mathbb P  (Y(t) \in {\mathrm d} \xi )   \,$  of $\, Y(t)\,$ under the original probability measure $\,\mathbb P \,$ is obtained from 
\begin{equation} 
\label{eq: TDF}
\mathbb P ( Y(t) \in {\mathrm d} \xi  ) = \mathbb P ( Y(t) \in {\mathrm d} \xi  \, , \, 2 \widehat{L}^{Y}(t) > 0) + \mathbb P (Y(t) \in {\mathrm d} \xi  \, , 2 \widehat{L}^{Y}(t) \, =\, 0) \cdot {\bf 1}_{\{\xi  \,y_{0} > 0\}} \, , 
\end{equation}
where the second term takes care of the case when the local time is absent. If $\, \xi  > 0\,$ and $\,y_{0} > 0\,$, the marginal density becomes 
\begin{equation} 
\label{TDF1}
\mathfrak p  (t; y_{0}, \xi )\, =\,
(2 \alpha - 1)\,  e^{-2 \lambda \xi } \cdot \frac{1}{\, \sqrt{2 \pi t}\, } \exp \Big \{ - \frac{\,(  \xi  +  y_{0}  - \lambda t )^{2}\, }{2 t} \Big\}\,  ~~~~~~~~~~~~~~~~~~~~~~~~~~~~~~
\end{equation}
\[
~~~~~~~~~~~~~~~~~~~~~
+\, \frac{1}{\, \sqrt{2 \pi t}\, } \exp \Big \{ - \frac{\,(  \xi  -  y_{0}+ \lambda t )^{2}\, }{2 t} \Big\}\,+ \,(2 \alpha)  \cdot \frac{\lambda \, e^{-2 \lambda \xi  }}{\, \sqrt{2 \pi t}\, } \int^{\infty}_{ \xi  + y_{0}} e^{\,- \frac{(u-\lambda t)^{2}}{2 t }} {\mathrm d} u \, ;   
\]

\medskip
\noindent
whereas, if $\,\,\xi  <0\,$ and $\, y_{0} < 0\,$,   this expression  becomes  
\begin{equation} 
\label{TDF2}
\mathfrak p  (t; y_{0}, \xi )\, =\,
 (1 - 2 \alpha )  e^{2 \lambda \xi } \cdot \frac{1}{\, \sqrt{2 \pi t}\, }  \exp \Big \{ - \frac{\,( - \xi  -  y_{0}  - \lambda t )^{2}\, }{2 t} \Big\} ~~~~~~~~~~~~~~~~~~~~~~~~~~~~
\end{equation}
\[
~~~~~~~~~~~~~~~~~~~~~~~~~~~~~~~~~+ \frac{1}{\, \sqrt{2 \pi t}\, }  \exp \Big \{ - \frac{\,( - \xi  +  y_{0}  + \lambda t )^{2}\, }{2 t} \Big\} + \,2\, ( 1- \alpha)  \cdot \frac{\lambda \, e^{2 \lambda \xi  }}{\, \sqrt{2 \pi t}\, } \int^{\infty}_{ -\xi  - y_{0}} e^{- \frac{(u-\lambda t)^{2}}{2 t }} {\mathrm d} u \, .   
\]

\noindent
If $\, \xi  \, y_{0} \le 0\,$, then this expression  becomes  
$$
\mathfrak p  (t; y_{0}, \xi )\,=\,
 \big \{ 1 + (2 \alpha - 1) \, \text{sgn} (\xi )  \big \} \cdot \frac{\, e^{- 2 \lambda \lvert \xi  \rvert} \, }{ \, \sqrt{2 \pi t} \, } \cdot  ~~~~~~~~~~~~~~~~~~~~~~~~~~~~~~~~~~~~~~~~~~~~~~~~~~~~~~~~~
$$
\begin{equation} 
\label{TDF3}
~~~~~~~~~~~~~~~~~~~~~~~~~~~~~~~~~~~~~~ \cdot \Big[  \exp \Big \{ - \frac{\, ( \lvert \xi \rvert + \lvert y_{0}\rvert  - \lambda t )^{2} \,  }{2 \, t} \Big\} 
 + \lambda \, \int^{\infty}_{ \lvert \xi \rvert + \lvert y_{0}\rvert} e^{- \frac{(u-\lambda t)^{2}}{2 t }} {\mathrm d} u \Big] \, , ~~~~~
 \end{equation}
 where the term $\,\mathbb P (Y(t) \in {\mathrm d} \xi  \, , 2 \widehat{L}^{Y}(t) \, =\, 0) \,$  in (\ref{eq: TDF}) is now equal to zero.

\medskip 

\begin{rem} \label{rem: STR} Letting $\,t \to \infty\,$ in (\ref{TDF1})-(\ref{TDF3}), we derive for the process $\,Y(\cdot)\,$ the stationary measure $\,\mathfrak m (\cdot) = \int_{\cdot} \mathfrak p_{\infty}(\xi )\, \mathrm{d} \xi  \,$     with the double-exponential probability density function
 \begin{equation} 
 \label{eq: sta dens}
\mathfrak p_{\infty}(\xi ) \, := \,\lim_{t \to \infty} \mathfrak p (t; y_{0}, \xi ) \, =\, \, \alpha \cdot (2\lambda)\,  e^{\,- 2 \, \lambda \,  \xi}   \cdot {\bf 1}_{\{\xi  > 0\}} + (1-\alpha) \cdot (2\lambda) \, e^{ \,2 \,\lambda \, \xi  } \cdot {\bf 1}_{\{\xi  \le 0\}} \, .  
\end{equation}
It can be verified that (\ref{eq: sta dens}) is the invariant distribution. Furthermore, it follows from the transition density (\ref{TDF1})-(\ref{TDF3}) and the stationary distribution (\ref{eq: sta dens}) that the following duality holds: 
\[
\int_{\R} g(y) \Big( \int_{\R} f(\xi ) \mathfrak p (t; y,\xi ) {\mathrm d} \xi  \Big) \mathfrak p_{\infty} (y) {\mathrm d} y \, =\,  \int_{\R} f(\xi ) \Big( \int_{\R} g(y) \mathfrak p (t; \xi ,y) {\mathrm d} y \Big) \mathfrak p_{\infty}(\xi ) {\mathrm d} \xi  , 
\]
for arbitrary bounded, measurable functions $\,f, g\,$, or equivalently 
\begin{equation} \label{eq: sym rela}
\int_{\R} g(y) \, \mathbb E_{y} \big[ f(Y(t))] \, \mathfrak p_{\infty} (y) \, {\mathrm d} y \, =\, 
\int_{\R} f(\xi ) \, \mathbb E_{\xi } \big[ g(Y(t))] \, \mathfrak p_{\infty} (\xi ) \,{\mathrm d} \xi  \, ; \quad t > 0 \,   .
\end{equation}
Here $\,\mathbb E_{y} \,$ stands for the expectation under the measure $\,\mathbb P_{y}\,$ induced by $\,Y(\cdot) \,$ which starts from $\,y \in \R \,$. Thus,  under the probability measure $$\,\mathbb P_{\infty}(\cdot) \, :=\, \int_{\R} \mathbb P_{y} (\cdot)\, \mathfrak m( {\mathrm d} y) \,,$$ the process $\,Y(\cdot)\,$ is stationary; and moreover, given a fixed time $\,T \in ( 0, \infty) \,$, the time reversal 
\begin{equation}
\label{TiRev}
  \widehat{Y}(t) \, :=\, Y(T-t)\, , \qquad \,0 \le t \le T 
\end{equation}
satisfies 
\begin{equation} 
\label{eq: time symmetry}
\mathbb E^{\,\mathbb P_{\infty}} \big[ \, f_{0}(Y(t_{0})) \cdots f_{n} (Y(t_{n})) \, \big] \, =\, 
\mathbb E^{\,\mathbb P_{\infty}} \big[\, f_{0}( \widehat{Y}(t_{n})) \cdots f_{n} ( \widehat{Y}(t_{0})) \,\big]   
\end{equation}

\medskip
\noindent
  for every integer  $\,n \in \N\,$, collection of time points $\, 0 \, =\,  t_{0} < t_{1} < \cdots < t_{n} \, =\, T$,  and    bounded, measurable functions $\,f_{0}, \ldots , f_{n}\,$.   
\end{rem}

\begin{rem} \label{rem: FS} 
The infinitesimal generator of the process $\,Y(\cdot)\,$ may be defined formally by 
\begin{equation}
[\mathcal L f] (\xi)\, :=\, \frac{1}{\,2\,} \,f^{\prime\prime}(\xi)  - \lambda \,   
{\text{sgn}}  (\xi) f^{\prime}(\xi) +    2 \,(2 \alpha - 1) f^{\prime}(\xi) \, \delta_0 (\xi) \, , \quad \xi \in \R\,  
\end{equation} 
for $\,f \in \mathcal D\, :=\, C^{\infty}_{0}(\R)\,$, where $\,\delta_0 (\cdot)\,$ is the ``Dirac delta function'' at the origin. Here we use the parametrization for the symmetric local time $\,\widehat{L}^{Y}(\cdot)\,$. 
Let us denote formally the symmetric version of the density of $\,\mathfrak m\,$ by $$\, \widehat{\mathfrak{p}}_{\infty}(\xi ) \,  =\, (2\alpha) \, \lambda \, e^{-\lambda \xi} \cdot {\bf 1}_{\{ \xi > 0\}} + 2(1 - \alpha) \, \lambda \, e^{\lambda \xi} \cdot {\bf 1}_{\{ \xi < 0\}} + \lambda \cdot {\bf 1}_{ \{\xi \, =\,  0\}} \,.$$ 
Then by direct calculation 
\begin{equation} \label{eq: FS (1.6)}
\int_{\R} f (\xi) \, [\mathcal L g] (\xi) \, \mathfrak m ({\mathrm d}\xi) \, =\, \int_{\R} g(\xi)  \, [\mathcal L f](\xi) \, \mathfrak m ( {\mathrm d} \xi) \, , \quad \int_{\R} [\mathcal L f ]  \, \mathfrak m ({\mathrm d} \xi) \, =\, 0 \, ; \quad f, \,g \in \mathcal D \,  . 
\end{equation}
Applying Theorem 2.3 of \textsc{Fukushima \& Stroock} \cite{MR875449} 
, we arrive at the same conclusion (\ref{eq: time symmetry}). 
\end{rem}

\begin{rem} \label{rem: TR} Let us define the time reversal of $\,Y(\cdot) \,$ as in (\ref{TiRev}).  
Following \textsc{Pardoux} \cite{MR942014}   
and \textsc{Petit} \cite{MR1464173} 
, we may show that the time reversal is a solution of the stochastic equation 
$$
\widehat{Y}(t) \, =\, \widehat{Y}(0) + {W}^{\sharp}(t) + 2 \, (1-2\alpha) \,\widehat{L}^{\, \widehat{Y}}(t) +~~~~~~~~~~~~~~~~~~~~~~~~~~~~~~
$$
\begin{equation} 
\label{eq: Yh}
~~~~~~~~~~~~~~~~~~~~~~~~~~~~~~~+  \int^{t}_{0} \left( \lambda \,   
{\text{sgn}} \big( \,\widehat{Y}(s) \big) +   \frac{\partial }{\, \partial \xi } \log \mathfrak p  \big(T-s; y_{0}, \widehat{Y}(s) \big) \right)  {\mathrm d} s  
\end{equation}
for $\,0 \le t \le T\,$, where $\, {W}^{\sharp}(\cdot)\,$ is a standard Brownian motion with respect to the backwards filtration $\, \mathbf{F}^{ \,\widehat{Y}}(\cdot) \,$ generated by the time-reversed process $\, \widehat{Y}(\cdot)\,$  of (\ref{TiRev}), and 
\begin{equation} \label{eq: Lhh}
\, \widehat{L}^{ \, \widehat{Y}}( t) \, :=\,  \widehat{L}^{\,Y}(T) - \widehat{L}^{\,Y}(T- t \,)\,, \qquad 0 \le t \le T\,. 
\end{equation}
In the special case $\,y_{0} = 0 = \widehat{Y}(T)\,$, the logarithmic derivative  of the transition probability density function is 
\[
\frac{\partial}{\, \partial \xi \, } \log \mathfrak p  \big(t; 0, \xi \big) \,= \,-\, 2\,  \lambda \, \text{sgn} (\xi ) - \frac{\,\xi \,}{\, t \, }\cdot  \frac{\mathfrak C_{1}(t,\xi )}{\, \mathfrak C_{1}(t,\xi ) + \mathfrak C_{2}(t,\xi ) \, } \, , 
\]
where 
\[
\mathfrak C_{1} (t,\xi ) \, :=\,  \exp \Big( - \frac{( \lvert \xi  \rvert + \lambda t)^{2}}{2t} \Big) \, , \quad 
\mathfrak C_{2} (t,\xi )\, :=\,  \lambda \, e^{- 2 \lambda \lvert \xi  \rvert} \int^{\infty}_{ \lvert \xi \rvert} \exp \Big( - \frac{ (u-\lambda t)^{2}}{2 t} \,\Big) \,{\mathrm d} u \, . 
\]
Thus, the time reversal is a {\it skew Brownian bridge} with bang-bang drift    
\[
\widehat{Y}(\cdot) \, =\,  \widehat{Y}(0) +   
W^{\sharp}(\cdot) + 2\, \big( 1- 2\,\alpha \big) \,\widehat{L}^{\, \widehat{Y}}(\cdot) -~~~~~~~~~~~~~~~~~~~~~~~~~~~~~~~~~~~~~~~~~~~~~~~~~~~~~~~~~~~~~~~
\]
\[
~~~~~~~~~~~~~~~~~~~~~~~~~~~~~~~~~~~~~~- \int^{\, \cdot}_{0} \bigg[\, \lambda \, 
{\text{sgn}} \big( \widehat{Y}(t) \big) +  \frac{ \widehat{Y}(t)}{T-t} \cdot \left( \frac{\mathfrak C_{1}}{\mathfrak C_{1}+ \mathfrak C_{2}} \right)   \big(T-t, \widehat{Y}(t) \big)   \bigg]  {\mathrm d} t \, .  
\]

\medskip
\noindent
This result suggests that the time reversal of SBBBM in general looks like a skew Brownian bridge drifted towards the target point $\, \widehat{Y}(T) \, =\, y_{0}\,$.
\end{rem}

%%%%%%%%%%%%%%%%%%%%%%%%%
\section{Applications of the Skew Representations (\ref{2.23b})-(\ref{2.24})}
\label{sec7}
%%%%%%%%%%%%%%%%%%%%%%%%%%%%%% 

\noindent
With the skew representations in section \ref{sec3.2} and the joint distribution of $\,(Y(\cdot), \widehat{L}^{\,Y}(\cdot))\,$ in section \ref{sec5.1} it is now straightforward to compute the transition density of the system (\ref{1.3})-(\ref{1.4}) as well as its time reversal.

%%%%%%%%%%%%%%%%%%%%%%%%
\subsection{Transition Density}
\label{sec7.1} 
%%%%%%%%%%%%%%%%%%%%%%%%%%%%%

Let us discuss only some special cases, since the other cases are quite similar. 
For example, in the degenerate case with $\,\sigma \, =\, 0\,$, thus $\,\rho \, =\, 1\,$, $\,\gamma = 1\,$ and with $\, x_{1} \ge x_{2}\,$, (\ref{2.23b})-(\ref{2.24}) become 
\[
X_{1}(t) = x_{1} + g\, t + (Y^{+}(t)- y_{0}^{+})- \beta \, \widehat{L}^{Y}(t) \, , \quad 
X_{2}(t) = x_{2} + g\, t + (Y^{-}(t)-y_{0}^{-}) - \beta \, \widehat{L}^{Y}(t) \, , 
\]
for $\, 0 \le t < \infty \,$, where $\,y_{0} \, =\, x_{1} - x_{2} \ge 0\,$, and hence the transition density of $\,(X_{1}(\cdot), X_{2}(\cdot)) \,$ is 
\[
\mathbb P \big (X_{1}(t) \in {\mathrm d} \xi_{1}\, , \, \, X_{2}(t) \in {\mathrm d} \xi_{2} \big ) 
\, =\, (2\alpha) \cdot \frac{2}{\beta} \cdot e^{- 2 \lambda (\xi_{1} - \xi_{2})} \cdot \frac{ \mathfrak c_{1}}{\, \sqrt{2 \, \pi \, t^{3}}\, } \exp \Big \{ - \frac{\, ( \mathfrak c_{1} - \lambda \, t)^{2}}{2 t} \Big \} {\mathrm d} \xi_{1} {\mathrm d} \xi_{2} \, , 
\]
\[
\text{ where } \quad \mathfrak c_{1} \, :=\, \xi_{1} - \Big(\frac{\, 2 + \beta \, }{\beta} \Big) \xi_{2} + x_{1} + \Big( \frac{\, 2 - \beta \, }{\beta} \Big) x_{2} + \frac{\, 2\, }{\beta} \, g \, t \, , 
\]

\medskip
\noindent
if $\,\beta > 0\,$, $\,\xi_{1} \ge \xi_{2}\,$ and $\,\xi_{2} < x_{2} + g\, t \,$. Similarly, by   (skew) symmetry:
\[
\mathbb P \big (X_{1}(t) \in {\mathrm d} \xi_{1}\, , \, \, X_{2}(t) \in {\mathrm d} \xi_{2} \big ) 
\, =\, 2(1-\alpha) \cdot \frac{2}{\beta} \cdot e^{- 2 \lambda (\xi_{2} - \xi_{1})} \cdot \frac{ \mathfrak c_{2}}{\, \sqrt{2 \, \pi \, t^{3}}\, } \exp \Big \{ - \frac{\, ( \mathfrak c_{2} - \lambda \, t)^{2}}{2 t} \Big \} {\mathrm d} \xi_{1} {\mathrm d} \xi_{2} \, , 
\]
\[
\text{ where } \quad \mathfrak c_{2} \,:=\, \xi_{2} - \Big(\frac{\, 2 + \beta \, }{\beta} \Big) \xi_{1} + x_{1} + \Big( \frac{\, 2 - \beta \, }{\beta} \Big) x_{2} + \frac{\, 2\, }{\beta} \, g \, t \, , 
\]

\medskip
\noindent
if $\,\beta > 0\,$, $\,\xi_{2} \ge \xi_{1}\,$ and $\,\xi_{1} < x_{2} + g\, t \,$. If $\,\beta > 0\,$, $\,\xi_{1} > \xi_{2} \, =\, x_{2} + g\,t\,$, then the local time $\,\widehat{L}^{\,Y}(\cdot) \,$ is absent, and  the transition density is easily obtained from (\ref{eq: joint0}): 
\[
\mathbb P \big(X_{1}(t) \in {\mathrm d} \xi_{1}\, , \,X_{2}(t) \, =\, x_{2} + g\, t \big) \, =\, 
\]
\[
=\, \frac{1}{\, \sqrt{2\pi t}\, } \Big( \exp \Big \{ - \frac{\,(a - x_{1} + x_{2} + \lambda t )^{2}\, }{2 t} \Big\} - e^{- 2\lambda \, a } \cdot \exp \Big \{ - \frac{\,(a +  x_{1} - x_{2} + \lambda t )^{2}\, }{2 t} \Big\} \Big)\Big \vert_{a \, =\, \xi_{1} - x_{2} - g \, t} {\mathrm d} \xi_{1}  \, . 
\]

\bigskip
\noindent
$\bullet~$ 
For another extreme example, in the degenerate case with $\, \sigma \, =\, 1\,$, thus $\, \rho \, =\, 0\,$, $\,\gamma \, =\,  -1 \,$ and with $\,x_{1} \ge x_{2}\,$, (\ref{2.23b})-(\ref{2.24}) become 
\[
X_{1}(t) = x_{1} - h \, t - (Y^{-}(t)- y_{0}^{-})+ (2-\beta) \, \widehat{L}^{\,Y}(t) \, , \quad 
X_{2}(t) = x_{2} - h \, t - (Y^{+}(t)-y_{0}^{+}) + (2-\beta) \, \widehat{L}^{\,Y}(t) \, , 
\]
for $\, 0 \le t < \infty \,$. If $\,\beta < 2\,$, $\,\xi_{1} \ge \xi_{2}\,$ and $\,\xi_{1} > x_{1} - h\, t\,$, then 
\[
\mathbb P \big (X_{1}(t) \in {\mathrm d} \xi_{1}\, , \, \, X_{2}(t) \in {\mathrm d} \xi_{2} \big ) 
\, =\, (2\alpha) \cdot \frac{2}{\, 2 - \beta\, } \cdot e^{- 2 \lambda (\xi_{1} - \xi_{2})} \cdot \frac{ \mathfrak c_{3}}{\, \sqrt{2 \, \pi \, t^{3}}\, } \exp \Big \{ - \frac{\, ( \mathfrak c_{3} - \lambda \, t)^{2}}{2 t} \Big \} {\mathrm d} \xi_{1} {\mathrm d} \xi_{2} \, , 
\]
\[
\text{ where } \quad \mathfrak c_{3} \, :=\, \Big(\frac{4 - \beta}{\, 2 - \beta \, } \Big) \xi_{1} - \xi_{2} - \Big(\frac{\beta}{\, 2 - \beta\, } \Big) x_{1} - x_{2} + \Big( \frac{4 - \beta}{\, 2 - \beta \, } \Big) h \, t 
\]
If $\,\beta < 2\,$, $\,\xi_{2} \ge \xi_{1}\,$ and $\,\xi_{2} > x_{1} - h\, t\,$, then 
\[
\mathbb P \big (X_{1}(t) \in {\mathrm d} \xi_{1}\, , \, \, X_{2}(t) \in {\mathrm d} \xi_{2} \big ) 
\, =\, 2( 1- \alpha) \cdot \frac{\,2\, e^{- 2 \lambda (\xi_{2} - \xi_{1})}\, }{\, 2 - \beta\, }    \cdot \frac{ \mathfrak c_{4}}{\, \sqrt{2 \, \pi \, t^{3}}\, } \exp \Big \{ - \frac{\, ( \mathfrak c_{4} - \lambda \, t)^{2}}{2 t} \Big \} {\mathrm d} \xi_{1} {\mathrm d} \xi_{2} \, , 
\]
\[
\text{ where } \quad \mathfrak c_{4} \, :=\, \Big(\frac{4 - \beta}{\, 2 - \beta \, } \Big) \xi_{2} - \xi_{1} - \Big(\frac{\beta}{\, 2 - \beta\, } \Big) x_{1} - x_{2} + \Big( \frac{4 - \beta}{\, 2 - \beta \, } \Big) h \, t \, . 
\]
If $\,\beta < 2\,$, $\,\xi_{1} \, =\, x_{1} - h \, t > \xi_{2}\,$, then the local time $\, \widehat{L}^{\,Y}(\cdot)\, $ does not accumulate, that is, the transitions density is obtained from (\ref{eq: joint0}) : 
\[
\mathbb P (X_{1}(t) \, =\, x_{1} - h \, t \, , \,X_{2}(t) \in {\mathrm d} \xi_{2}) \, =\, 
\]
\[
=\, \frac{1}{\, \sqrt{2\pi t}\, } \Big( \exp \Big \{ - \frac{\,(a - x_{1} + x_{2} + \lambda t )^{2}\, }{2 t} \Big\} - \,e^{- 2\lambda \, a } \, \exp \Big \{ - \frac{\,(a +  x_{1} - x_{2} + \lambda t )^{2}\, }{2 t} \Big\} \Big)\Big \vert_{a \, =\, x_{1} - \xi_{2} - h \, t} {\mathrm d} \xi_{1}  \, . 
\]

\medskip
\noindent
$\bullet~$ 
For the isotropic variance case with $\, \rho \, =\, \sigma \, =\, 1 \, / \, \sqrt{2} \,$, $\,\gamma \, =\, 0\,$ and $\, x_{1} \ge x_{2}\,$ the difference and the sum of $\,X_{1}(\cdot) \,$ and $\,X_{2}(\cdot) \,$ are 
\[
X_{1}(\cdot) - X_{2}(\cdot) \, =\, Y(\cdot) \, , \quad 
X_{1}(\cdot) + X_{2}(\cdot) \, =\, x_{1} + x_{2} + \nu \, t  + 2(2\alpha - 1) \widehat{L}^{\,Y}(\cdot) + Q(\cdot) \, , 
\]
where $\, (Y(\cdot), \widehat{L}^{\,Y}(\cdot))\,$ and $\,Q(\cdot) \,$ are independent. Thus the joint distribution of $\,(X_{1}(\cdot), X_{2}(\cdot)) \,$ are obtained by integrating out the local time. 

\smallskip
If $\,\alpha \in (0, 1) \setminus \{ 1\, / \, 2\}\,$, then the above transition densities are discontinuous on the diagonal line due to the skewness. 
If $\,\alpha = 1\, / \, 2\,$ and $\,\beta \, =\, 1\,$, then these formulae are the same as those of the degenerate system studied in \textsc{Fernholz et al.} \cite{FIKP2012}. 
The transition densities for all the other cases as well as the joint distribution of $\, (X_{1}(\cdot), X_{2}(\cdot), L^{X_{1} - X_{2}}(\cdot)) \,$ are computable from the skew representations (\ref{2.23b})-(\ref{2.24}) and the joint distribution (\ref{eq: joint+})-(\ref{eq: joint0}) in a similar manner.  

%%%%%%%%%%%%%%%%%%%%%%%%
\subsection{Time Reversal}
\label{sec7.2} 
%%%%%%%%%%%%%%%%%%%%%%%%%%%%%

\noindent
We consider now the time-reversal 
\begin{equation} \label{eq: 7.1}
\widehat{X}_{i}(t) \, :=\, X_{i}(T-t) \, , \quad \widetilde{X}_{i}(t) \, :=\, X_{i}(T-t) - X_{i}(T) \, , \quad 0 \le t \le T \, ,\, \,  i = 1, 2 \, 
\end{equation}
of the solution to the system (\ref{1.3})-(\ref{1.4}) with the backwards filtration $\, \widetilde{\mathbf F} \, =\, \{\widetilde{\mathfrak F}(t) \}_{0 \le t \le T}\,$ generated by the random variable $\,Y(T)\,$ and by the time-reversal $$\,\big(\, \widetilde{W}(t) \, :=\, W(T-t) - W(T) \, , \,\,\widetilde{Q}(t) \, :=\,  Q(T-t) - Q(T) \, \big)\,, \qquad 0 \le t \le T \,$$ of the planar Brownian motion :  
\[
\widetilde{\mathfrak F}(t) \, :=\, {\bm \sigma} (Y(T)) \vee \mathfrak F^{\,(\widetilde{Q}, \widetilde{W})} (t) \, , \quad \mathfrak F^{\,(\widetilde{Q}, \widetilde{W})}(t) \, :=\, {\bm \sigma} \big(\widetilde{Q}({\bm \theta}), \widetilde{W}({\bm \theta}) \, ; 0 \le {\bm \theta} \le t \big) \, , \quad 0 \le t \le T \, . 
\]

With some extra work in addition to the discussion of Remark \ref{rem: TR} we may show that $\, \widehat{Y}(\cdot)\,$ is a diffusion (\ref{eq: Yh}) driven by the $\,\widehat{\mathbb F}\,$-Brownian motion $\, \widehat{W}^{\sharp}(\cdot) \,$ (c.f. \textsc{Pardoux} \cite{MR942014} %(1986) 
and section 3 of \textsc{Petit} \cite{MR1464173}).  %(1997)
Combining the skew representations (\ref{2.23b})-(\ref{2.24}) with the time-reversals (\ref{eq: Yh})-(\ref{eq: Lhh}), we derive the time-reversed skew representation form 

\[
\widetilde{X}_{1}(t) \, =\,  - \mu \, t  + \rho^{2} \big( \widehat{Y}^{+}(t) - \widehat{Y}^{+}(0) \big) - \sigma^{2} \big( \widehat{Y}^{-}(t) - \widehat{Y}^{-}(0) \big) - (1 - \beta - \gamma) \widehat{L}^{Y}(t) + \rho \, \sigma \widehat{Q}(t) \, ~~~~~~
\]
\begin{equation} \label{eq: trX1} 
 ~~~~~~ =\, - \mu \, t  + \int^{t}_{0} \big( \rho^{2} {\bf 1}_{\{ \widehat{Y}(s) > 0\} } + \sigma^{2} {\bf 1}_{\{ \widehat{Y}(s) \le 0\}} \big) {\mathrm d} \widehat{Y}(s) - (1 - \beta - 2\gamma) \widehat{L}^{Y}(t) + \rho \, \sigma \widehat{Q}(t)
\end{equation}

%\newpage
\noindent
for $\,0 \le t \le T\,$,  and 
\[
\widetilde{X}_{2}(t) \, =\,  - \mu \, t  - \sigma^{2} \big( \widehat{Y}^{+}(t) - \widehat{Y}^{+}(0) \big) + \rho^{2} \big( \widehat{Y}^{-}(t) - \widehat{Y}^{-}(0) \big) - (1 - \beta - \gamma) \widehat{L}^{Y}(t) + \rho \, \sigma \widehat{Q}(t) \, ~~~~~~
\]
\begin{equation} 
\label{eq: trX2}
\, ~~~~~~~~~~ =\, - \mu \, t  - \int^{t}_{0} \big( \rho^{2} {\bf 1}_{\{ \widehat{Y}(s) \le 0\} } + \sigma^{2} {\bf 1}_{\{ \widehat{Y}(s) > 0\}} \big) {\mathrm d} \widehat{Y}(s) - (1 - \beta - 2\gamma) \widehat{L}^{Y}(t) + \rho \, \sigma \widehat{Q}(t)\,.
\end{equation}

\begin{rem} By analogy with (\ref{eq: 7.1}) we denote the time-reversal of ranks by $\,\widehat{R}_{i}(t) \, :=\, R_{i}(T-t) \,$ for $\, 0 \le t \le T\,$, $\,i = 1, 2\,$. Applying the \textsc{Tanaka} formula to (\ref{eq: trX1})-(\ref{eq: trX2}), we may derive the time-reversed dynamics of $\, (R_{1}(\cdot), R_{2}(\cdot))\,$. 
\end{rem}

\begin{rem} As we saw in Remarks \ref{rem: STR}-\ref{rem: FS}, the process $\,Y(\cdot)\,$ is strictly time-reversible  when   started at its    invariant distribution (\ref{eq: sta dens}). Under this invariant distribution, the dynamics of the time-reversal of $\,(R_{1}(\cdot), R_{2}(\cdot)) \,$ can be derived through the skew representations of (\ref{2.23b})-(\ref{2.24}). 
\end{rem} 

\bigskip 

\noindent{\bf Acknowledgements}  
The authors are grateful to Drs. Adrian Banner, Vassilios Papathanakos, Phillip Whitman and Mykhaylo Shkolnikov for   several helpful discussions, and to Dr. Vilmos Prokaj for his very careful reading of the manuscript and his many suggestions. The research of the third author was supported in part by National Science Foundation Grant DMS-09-05754.

% \newpage

%\bigskip
\medskip 

%%%%%%%%%%%%%%%%%%%%%%%%%%%%%%%%%%
%\section*{Bibliography}
%%%%%%%%%%%%%%%%%%%%%%%%%%%%%%%%%%

\makeatletter
\def\barxiv#1{{\url@fmt{ArXiv e-print:
    }{\upshape\ttfamily}{#1}{\arxiv@base#1}}}
\def\bdoi#1{{\url@fmt{DOI:
    }{\upshape\ttfamily}{#1}{\doi@base#1}}}
\def\bdoi#1{\@ifnextchar.\@gobble\relax}
\makeatother

  \end{document}